\def\N{\mathbb{N}} 
\newcommand{\R}{\mathbb{R}}
\newcommand{\Rn}{\R^n}
\newcommand{\Rm}{\R^m}
\newcommand{\Rnn}{\R^{n \times n}}
\newcommand{\G}{\mathcal{G}}
\newcommand{\C}{\mathcal{C}}
\renewcommand{\L}{\mathcal{L}}
\DeclareMathOperator{\distG}{\operatorname{dist_\mathcal{G}}} 
\newcommand{\agents}{\lbrace 1, \dots, s \rbrace}
\newcommand{\dataTrain}{\mathcal{D}_{\text{tr}}}
\newcommand{\dataVal}{\mathcal{D}_{\text{val}}}
\newcommand{\dataTest}{\mathcal{D}_{\text{te}}}
\newcommand{\cD}{\mathcal{D}}
\newcommand{\abs}[1]{\left\lvert {#1} \right\rvert}
\newcommand{\norm}[1]{\left\lVert {#1} \right\rVert}
\newtheorem{lemma}{Lemma}[section]
\newtheorem{theorem}[lemma]{Theorem}
\newtheorem{corollary}[lemma]{Corollary}
\newtheorem{definition}{Definition}[section]
\newtheorem{prop}{Proposition}[section]
\newtheorem{remark}{Remark}[section]
\newtheorem{example}{Example}[section]
\newtheorem{setting}{Setting}[section]
\title{Separable Approximations of Optimal Value Functions and Their Representation by Neural Networks\thanks{This research was supported by the Deutsche Forschungsgemeinschaft (DFG, German Research Foundation) - project number 463912816.}}
\author{
    Mario Sperl\thanks{Mathematical Institute, University of Bayreuth, Germany, (\texttt{mario.sperl@uni-bayreuth.de} and \texttt{lars.gruene@uni-bayreuth.de})} \and
    Luca Saluzzi \thanks{Sapienza, University of Rome, Italy, ( \texttt{luca.saluzzi@uniroma1.it})} \and
    Dante Kalise \thanks{Department of Mathematics, Imperial College London, United Kingdom, (\texttt{d.kalise-balza@imperial.ac.uk})} \and 
    Lars Grüne\footnotemark[2]
}
\date{\today}
\begin{document}

\maketitle

\begin{abstract}
The use of separable approximations is proposed to mitigate the curse of dimensionality related to the approximation of high-dimensional value functions in optimal control. The separable approximation exploits intrinsic decaying sensitivity properties of the system, where the influence of a state variable on another diminishes as their spatial, temporal, or graph-based distance grows. This property allows the efficient representation of global functions as a sum of localized contributions. A theoretical framework for constructing separable approximations in the context of optimal control is proposed by leveraging decaying sensitivity in both discrete and continuous time. Results extend prior work on decay properties of solutions to Lyapunov and Riccati equations, offering new insights into polynomial and exponential decay regimes. Connections to neural networks are explored, demonstrating how separable structures enable scalable representations of high-dimensional value functions while preserving computational efficiency.
\end{abstract}

\noindent\textbf{Keywords:} Separable approximations, Decaying sensitivity, Neural Networks, Optimal control


\section{Introduction}

High-dimensional systems are ubiquitous in science and engineering, with applications ranging from physics and biology, to social networks and machine learning. A recurring challenge in these domains is the efficient representation and computation of functions defined over high-dimensional spaces. It is often the case that the computational complexity associated to function representation grows exponentially with the space dimension, a phenomenon referred to as the {\sl curse of dimensionality}. 

One promising approach to mitigating the curse of dimensionality lies in exploiting separability, that is, the ability to approximate a global function as a sum of local contributions involving only a subset of variables. Separability properties can be intrinsic to the problem under study, as in systems where interactions decay with distance or graph connectivity, such as in physical systems governed by locality principles or networked systems with sparse interactions. At a computational level, separability can be enforced by a suitable approximation ansatz, including separation of variables, ANOVA\cite{OW03}, and most recent tools such as tensor decompositions\cite{BM02,OS11}. Theoretical and computational frameworks for separable  approximations offer significant reductions in complexity while retaining essential structural features of the underlying functions.

In this work, we focus on the construction and analysis of separable approximations for functions that exhibit a decaying sensitivity property, where the influence of one variable on another diminishes as their distance (spatial, temporal, or graph-based) increases.

The concept of decaying sensitivity is intrinsically linked to the off-diagonal decay property, a topic extensively explored in the field of numerical linear algebra. A foundational theoretical framework for understanding this behavior is introduced in \cite{benzi2016localization}, where decay bounds are derived for the entries of general matrix functions. This analysis is further advanced in \cite{haber2016}, which examines decay properties in the context of optimal control, specifically for solutions to Lyapunov equations. Additionally, related investigations are presented in \cite{massei2024data}, where the sparsity structure of solutions to Riccati equations is analyzed, providing complementary insights into the decay and sparsity phenomena in matrix equations.
Our particular focus of application are optimal control problems, for which such decaying sensitivity properties in time and space have been intensively studied in the past years, see, e.g., \cite{shin2021controllability,na2020exponential,GrSS20} for decaying sensitivity in time and \cite{shin2022lqr,zhang2023optimal,qu2022scalableReinforcement,gottlich2024perturbations} for decaying sensitivity in space for problems governed by discrete-time dynamics, ODEs and PDEs. See also \cite{shin2022expDecay} for a related property for static nonlinear optimization problems. The key prerequisite for the occurrence of decaying sensitivity in space is that the optimal control problem consists of subsystems that are interconnected by a graph structure. For instance in multi-agent control systems or robotic swarms, each agent's state and control variables are influenced predominantly by neighboring agents, leading to a natural decaying sensitivity structure. In \cite{DKK21}, it was shown that low-rank approximations to linear-quadratic value functions explicitly depend on the connectivity of the dynamics. By leveraging this decaying sensitivity property, we develop computationally efficient methods for approximating optimal value functions through separable structures. 

\subsection{Contribution}

The main purpose of this paper is to show that decaying sensitivity in space enables the construction of accurate separable approximations. While our result is formulated for arbitrary functions and thus provides a general framework for separable approximations, we focus on the approximation of value functions arising in the context of dynamic programming or reinforcement learning algorithms. Preliminary versions of the results in this paper appeared in the conference paper \cite{sperl2023separable}, however, the estimates provided in this paper significantly improve upon the results in \cite{sperl2023separable}. In particular, we also address the case in which the sensitivity does not decay exponentially but only polynomially. Moreover, we comment on the relation of our result to known approximation-theoretic methods, such as {\sl anchored decompositions}.

Furthermore, we establish connections between the existence of separable approximations and neural networks, illustrating how structured architectures can benefit from separability to achieve scalable representations of high-dimensional functions. Through theoretical results and numerical experiments, we demonstrate the potential of this approach in reducing computational complexity while maintaining high approximation accuracy.

\subsection{Outline}

After defining the setting of this paper in Section \ref{sec:setting}, our main results are formulated and proved in Section \ref{sec:sep}. Here, we also explain the relation of our construction to the \emph{anchored decomposition}. In Section \ref{sec.ExistenceDecay} we recall results on the existence of exponentially decaying sensitivity for discrete-time linear quadratic optimal control problems and provide a decaying sensitivity result for a class of continuous-time linear-quadratic optimal control problems. In Section \ref{sec:NN} we explain why the existence of separable approximations reduces the complexity of representing high-dimensional functions with neural networks and in Section \ref{sec:num} we provide numerical examples that illustrate these findings.

\subsection{Notation}
Throughout this paper, we adopt the following notation. For a vector \( x \in \Rn \), we denote its one-dimensional components as \( x_j \) for \( 1 \leq j \leq n \). The entries of a given matrix \( A \), which may include block-entries, are denoted by \( A[i,j] \). The identity matrix in \( \mathbb{R}^{n \times n} \) is denoted by \( I_n \), and the null matrix of the same dimension is represented by \( 0_n \). Additionally, unless otherwise specified, the notation \( \| \cdot \| \) refers to the Euclidean 2-norm. The comparison space \( \mathcal{L} \) consists of all continuous and strictly decreasing functions \( \gamma \colon \mathbb{R}_{\geq 0} \to \mathbb{R}_{\geq 0} \) that satisfy the condition \( \lim_{\tau \to \infty} \gamma(\tau) = 0 \).  For a set $\Omega \subset \Rn$ and a function $f \colon \Omega \to \Rm$, we set
$
    \lVert f \rVert_{\infty, \Omega} \coloneqq \sup_{x \in \Omega} \lVert f(x) \rVert.
$
If $f$ is continuously differentiable, we further define 
\begin{equation*}
    \| f \|_{\mathcal{C}^1(\Omega)} \coloneqq \| f \|_{\infty, \Omega} + \| \nabla f \|_{\infty, \Omega}. 
\end{equation*}

\section{General setting for decaying sensitivity} \label{sec:setting}

In this section, we introduce sensitivity for graph-based problems through a general framework. Though our primary results in Section \ref{sec.ExistenceDecay} address decaying sensitivity in optimal control problems, the underlying concepts have broader applicability. This general formulation is valuable because the existence of separable approximations -- a key finding related to decaying sensitivity -- extends beyond optimal control to other graph-based problems.

\begin{setting} \label{setting:general}
  Let us consider a system of $s \in \mathbb{N}$ agents with states $z_1, \dots, z_s$ with $z_j \in \mathbb{R}^{n_j}$ for $j = 1, \dots, s$, which together constitute a vector in the state space $\mathbb{R}^n$, where $n = \sum_{j=1}^s n_j$. Denote by $x = (z_1, \dots, z_s) \in \mathbb{R}^n$ the combined state of all agents. The interactions among the agents are represented by a directed graph $\mathcal{G} = (\mathcal{V}, \mathcal{E})$, consisting of $s$ vertices $\mathcal{V} = \agents$, corresponding to the agents, with an edge $(i,j) \in \mathcal{E}$ indicating that agent $z_i$ exerts some influence on $z_j$. For $i, j \in \agents$, define $\text{dist}_{\mathcal{G}}(i,j)$ to be the length of the shortest path from node $i$ to node $j$ in $\mathcal{G}$ (possibly $\infty$ if no path exists and $0$ if $i = j$). We are interested in computing a quantity of interest given by a function $V \colon \Omega \subset \mathbb{R}^n \to \mathbb{R}$. 
\end{setting}
Note that we use a directed graph $\mathcal{G}$ to describe the interaction among agents. However, in the special case of symmetric influence, these interactions can be represented using an undirected graph, which can be viewed as a particular case of a directed graph. While the discussion in Section \ref{sec.ExistenceDecay} and the presented examples focus on the symmetric case, leading to undirected graphs, we adopt the more general formulation with directed graphs, as the main results in Section 3 hold for both cases. We emphasize that the edges in our graph are defined as tuples of nodes without assigned weights and the length of a path is determined by the number of its edges. An interesting extension would be to consider weighted graphs, where the weight of an edge represents the strength of the coupling between the nodes and the graph distance between two nodes is defined as the sum of the weights along a path. A detailed analysis of this setting is, however, deferred to future research.

In the following, we formalize the concept of a decaying sensitivity between agents with an increasing graph distance. To this end, for each $j \in \agents$ define the mapping 
    \begin{equation} \label{eq:setting:def_gj}
        g_j \colon \Rn \to \R , \qquad 
        x = (z_1, \dots, z_s) \mapsto V(x) - V(z_1, \dots, z_{j-1}, 0, z_{j+1}, \dots, z_s).
    \end{equation}
Each such mapping $g_j$ allows us to quantify the influence that the value of $z_j$ has on the evaluation of $V$ at some point $x$ in the state space compared to the anchor $0$. While the mappings $g_j$ above could be defined using a general anchor in place of $0$ in the second term of the difference, we choose to compare with $0$ to avoid notational overload. We are now interested in how much the value of such a function $g_j$ at some point in the state space is affected by a change of the values $z_i$ with $i \in \agents, i \neq j$. We characterize this effect by examining the Lipschitz constant of the mapping $g_j$ with respect to $z_i$. We call $g_j$ Lipschitz with respect to $z_i$ at a point $x = (z_1, \dots, z_s) \in \Omega$ with Lipschitz constant $L > 0$ if for all $\tilde{x} = (z_1, \dots, z_{i-1}, \tilde{z}_i, z_{i+1}, \dots, z_s) \in \Omega$ it holds that 
\begin{equation*}
    |g_j(x) - g_j(\tilde{x})| \leq L \| z_i - \tilde{z}_i \|. 
\end{equation*}
\begin{definition} \label{def:setting:sensitivity}
   We say that $V$ has the $\gamma$-decaying sensitivity property with respect to $\G$ on $\Omega$ for $\gamma \in \L$ if for all $i, j \in \agents$ with $i \neq j$ and for all $x \in \Omega$ the mapping $g_j$ as defined in \eqref{eq:setting:def_gj} is Lipschitz w.r.t. $z_i$ at the point $x = (z_1, \dots, z_s)$ with Lipschitz constant
    \begin{equation} \label{eq:def_Lipschitz}
        L_{i,j} \coloneqq \gamma (\distG(i,j)) \norm{z_j}. 
    \end{equation}
\end{definition}
If the considered graph $\mathcal{G}$ and the set $\Omega$ are clear from the context, we refer to this property as $\gamma$-decaying sensitivity. Intuitively, Definition \ref{def:setting:sensitivity} implies that the influence an agent $z_i$ has on $z_j$, as measured by the mapping $g_j$, decreases as the graph distance between $i$ and $j$ increases. Note that in the situation of Definition \ref{def:setting:sensitivity} we may have $\distG(i,j) = \infty$ for some $i,j \in \agents$ if there is no path from node $i$ to node $j$ in $\mathcal{G}$. In this case, \eqref{eq:def_Lipschitz} is to be understood as $L_{i,j} = 0$, thus stating that $g_j$ is independent of $z_i$.

\section{Separable approximation through decaying sensitivity}\label{sec:sep}
In this section, we first clarify the relationship between the decaying sensitivity property introduced in Definition \ref{def:setting:sensitivity} and the Hessian of the function $V$, under the assumption that $V$ is twice continuously differentiable. We then revisit the approximation method for $V$ proposed in \cite{sperl2023separable} and apply Definition \ref{def:setting:sensitivity} to establish an enhanced error bound that is independent of the number of agents $s$ for settings exhibiting either exponential or polynomial decay in sensitivity. Finally, we highlight a connection between our construction and the anchored decomposition, a well-established concept in approximation theory. Throughout this section, we work within the framework established in Setting \ref{setting:general}.

\subsection{Relation between decaying sensitivity and the Hessian}
Assuming twice differentiability of the function $V$, we can deduce $\gamma$-decaying sensitivity from a corresponding property of its Hessian, as established in the following result.
\begin{prop} \label{prop:Decay_Hessian}
    Let $\Omega \subset \Rn$ be convex and assume that $V$ is $\C^2(\Omega)$. If there exist $\gamma \in \L$ such that for all $i,j \in \agents$ with $i \neq j$ and for all $x = (z_1, \dots, z_s) \in \Omega$ we have 
    \begin{equation*}
      \norm{\frac{\partial^2}{\partial z_i \partial z_j} V(x)} \leq \gamma(\distG(i,j)), 
    \end{equation*}
    then $V$ has $\gamma$-decaying sensitivity on $\Omega$. 
\end{prop}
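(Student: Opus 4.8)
The plan is to fix indices $i \neq j$ in $\agents$, a point $x = (z_1, \dots, z_s) \in \Omega$, and an arbitrary competitor $\tilde x = (z_1, \dots, z_{i-1}, \tilde z_i, z_{i+1}, \dots, z_s) \in \Omega$ differing from $x$ only in the $i$-th block, and to establish the bound $\abs{g_j(x) - g_j(\tilde x)} \le \gamma(\distG(i,j)) \norm{z_j}\, \norm{z_i - \tilde z_i}$; by Definition~\ref{def:setting:sensitivity} this is exactly the statement that $V$ has $\gamma$-decaying sensitivity on $\Omega$. The device I would use is the bilinear interpolation $p \colon [0,1]^2 \to \Rn$ whose $i$-th block equals $(1-t)\tilde z_i + t z_i$, whose $j$-th block equals $s z_j$, and whose remaining blocks are frozen at the common values $z_k$, $k \notin \{i,j\}$. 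Its four vertices are $p(1,1) = x$, $p(0,1) = \tilde x$, $p(1,0)$ (the point $x$ with its $j$-th block set to $0$) and $p(0,0)$ (the point $\tilde x$ with its $j$-th block set to $0$); these are precisely the four arguments at which $V$ is evaluated when forming $g_j(x)$ and $g_j(\tilde x)$ via \eqref{eq:setting:def_gj}, hence all four lie in $\Omega$. Since $p(t,s)$ is the convex combination of these vertices with weights $(1-t)(1-s)$, $t(1-s)$, $(1-t)s$, $ts$, convexity of $\Omega$ gives $p(t,s) \in \Omega$ for every $(t,s) \in [0,1]^2$, so $F \coloneqq V \circ p$ is well defined, and since $p$ is smooth and $V \in \C^2(\Omega)$, we have $F \in \C^2([0,1]^2)$.

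Next I would rewrite the quantity of interest as a mixed second difference of $F$ and then, via two applications of the fundamental theorem of calculus, as a double integral:
\[
 g_j(x) - g_j(\tilde x) = \bigl(F(1,1) - F(1,0)\bigr) - \bigl(F(0,1) - F(0,0)\bigr) = \int_0^1 \!\!\int_0^1 \frac{\partial^2 F}{\partial s\, \partial t}(t,s)\, ds\, dt .
\]
By the chain rule, using that $\partial_t p$ is supported on the $i$-th block where it equals $z_i - \tilde z_i$, one gets $\partial_t F(t,s) = \nabla_{z_i} V(p(t,s)) \cdot (z_i - \tilde z_i)$; differentiating once more in $s$, using that $\partial_s p$ is supported on the $j$-th block where it equals $z_j$, and invoking the $\C^2$-regularity to commute the two differentiations, one obtains
\[
 \frac{\partial^2 F}{\partial s\, \partial t}(t,s) = \Bigl( \tfrac{\partial^2}{\partial z_i \partial z_j} V(p(t,s))\, z_j \Bigr) \cdot (z_i - \tilde z_i).
\]

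Finally I would insert the hypothesis: by the Cauchy--Schwarz inequality and the operator-norm estimate $\norm{Av} \le \norm{A}\,\norm{v}$, together with the assumed bound $\norm{\tfrac{\partial^2}{\partial z_i \partial z_j} V(p(t,s))} \le \gamma(\distG(i,j))$ — applicable because $p(t,s) \in \Omega$ — it follows that $\bigl\lvert \tfrac{\partial^2 F}{\partial s\, \partial t}(t,s) \bigr\rvert \le \gamma(\distG(i,j)) \norm{z_j}\, \norm{z_i - \tilde z_i}$ uniformly in $(t,s) \in [0,1]^2$, and integrating over the unit square yields exactly the Lipschitz estimate with constant $L_{i,j}$ as in \eqref{eq:def_Lipschitz}. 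The degenerate case $\distG(i,j) = \infty$ needs no separate argument: there $\gamma(\distG(i,j)) = \lim_{\tau \to \infty} \gamma(\tau) = 0$ forces $\tfrac{\partial^2}{\partial z_i \partial z_j} V \equiv 0$ on $\Omega$, so $g_j$ is independent of $z_i$, consistent with the convention $L_{i,j} = 0$. I expect the main subtlety to be the geometric step, namely guaranteeing that the entire square $p([0,1]^2)$ — not merely a single line segment — remains inside $\Omega$, which is exactly where the convexity hypothesis on $\Omega$ is essential; the remaining steps are routine multivariable calculus together with bookkeeping of the block structure.
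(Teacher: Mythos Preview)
Your proof is correct. Both your argument and the paper's reduce the Lipschitz estimate to the assumed bound on the mixed Hessian block, using convexity of $\Omega$ to stay inside the domain; the difference is organizational. The paper proceeds in two sequential mean-value steps: first it bounds $\norm{\partial_{z_i} g_j(x)}$ by applying the mean value theorem in the $z_j$-direction (producing a point $\xi z_j$ on the segment from $0$ to $z_j$), obtaining $\norm{\partial_{z_i} g_j(x)} \le \gamma(\distG(i,j))\norm{z_j}$ at every $x \in \Omega$, and then infers the Lipschitz bound in the $z_i$-direction from this gradient bound. You instead package both steps into a single bilinear interpolation $p(t,s)$ and write $g_j(x) - g_j(\tilde x)$ directly as the double integral of $\partial_{st}^2(V\circ p)$, which immediately exposes the Hessian block. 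What your version buys is an explicit treatment of the geometric requirement: you verify that the full square $p([0,1]^2)$ --- not merely two separate segments --- lies in $\Omega$, which is in fact what the paper's two-step argument also needs but leaves implicit.
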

\begin{proof}
    Fix some $i,j \in \agents$, $i \neq j$. Let $z_{-j} = (z_1, \dots, z_{j-1}, z_{j+1}, \dots, z_s) \in \R^{n - n_j}$. By assumption, the mapping 
   \begin{equation*}
       z_j \mapsto \frac{\partial}{\partial z_i} V(z_1, \dots, z_s)
   \end{equation*}
   is continuously differentiable. Thus, for all $z_j \in \R^{n_j}$ with $(z_1, \dots, z_s) \in \Omega$ there exists some $\xi \in [0,1]$ such that 
   \begin{align*}
    & \norm{\frac{\partial}{\partial z_i} g_{j} (z_1, \dots, z_s)} 
    = \norm{ \frac{\partial}{\partial z_i} V(x) - \frac{\partial}{\partial z_i} V(z_1, \dots, z_{j-1}, 0, z_{j+1}, \dots, z_s)}  \\ 
    = &  \norm{\frac{\partial^2}{\partial z_i \partial z_j} V(z_1, \dots, z_{j-1}, \xi z_j, z_{j+1}, \dots, z_s) z_j }. 
   \end{align*}
    By assumption and the convexity of $\Omega$ we obtain 
   \begin{equation*}
       \norm{\frac{\partial}{\partial z_i} g_{j}(z_1, \dots, z_s)} \leq \gamma(\distG(i,j)) \norm{z_j}. 
   \end{equation*}
   Thus, $L_{i,j} \coloneqq \gamma(\distG(i,j)) \norm{z_j}$ is a Lipschitz constant of $g_j$ with respect to $z_i$ at the point $x = (z_1, \dots, z_s)$.   Then, applying Definition \ref{def:setting:sensitivity}, the result follows.
\end{proof}

Quadratic functions are of particular interest in optimal control, as they represent solutions to the Linear Quadratic Regulator (LQR) problem, which will be examined in greater detail in Section \ref{sec.ExistenceDecay}. For quadratic functions of the form $V(x) = x^T P x$ for some matrix $P \in \Rnn$ we can characterize the decaying sensitivity of $V$ via a decay in the block structure of $P$. Given an exponential decay, this links Definition \ref{def:setting:sensitivity} to the concept of spatially exponential decaying matrices. In the following, for $P \in \Rnn$ we denote with $P[i,j]$ the block-matrix of $P$ consisting of the columns corresponding to $z_j$ and the rows corresponding to $z_i$.

\begin{definition}[Definition 1 in \cite{zhang2023optimal}]
       A matrix $P \in \Rnn$ is called $(C,\rho)$ spatially exponential decaying (SED) for $C > 0$ and $\rho \in (0,1)$ if for all $i,j \in \agents$  
    \begin{equation*}
        \norm{P[i,j]} \leq C \rho^{\distG(i,j)}.
    \end{equation*} 
\end{definition}

\begin{corollary}
\label{cor:decay_P}
    Let $V(x) = x^T P x$ with $P \in \Rnn$. Assume that there exists $\gamma \in \L$ such that for all $i,j \in \agents$ it holds that 
    \begin{equation}
        \norm{P[i,j]} \leq \gamma(\distG(i,j)). 
    \end{equation}
    Then $V$ admits a $\gamma$-decaying sensitivity property. In particular, if $P$ is $(C, \rho)$ SED for some $C >0$ and $\rho \in (0,1)$, then $V$ admits an exponential decaying sensitivity given by $\gamma(\cdot) = C \rho^{(\cdot)}$. 
\end{corollary}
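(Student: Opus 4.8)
The plan is to read the decaying sensitivity directly off the coefficient matrix, either by invoking Proposition~\ref{prop:Decay_Hessian} — noting that a quadratic form has the constant Hessian $P + P^T$ — or, slightly more cleanly, by computing $g_j$ explicitly, which avoids the convexity hypothesis of Proposition~\ref{prop:Decay_Hessian} and hence works on an arbitrary $\Omega$.

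Concretely, I would first expand $V(x) = \sum_{k,l=1}^s z_k^T P[k,l] z_l$ and observe that setting $z_j \coloneqq 0$ annihilates exactly the summands with $k = j$ or $l = j$, so that
\begin{equation*}
  g_j(x) = z_j^T P[j,j] z_j + \sum_{l \neq j} z_j^T P[j,l] z_l + \sum_{k \neq j} z_k^T P[k,j] z_j .
\end{equation*}
Fixing $i \neq j$ and freezing every block except $z_i$, the only $z_i$-dependent contributions are $z_j^T P[j,i] z_i$ and $z_i^T P[i,j] z_j = z_j^T P[i,j]^T z_i$, so $g_j$ is affine in $z_i$ with constant gradient block $\bigl(P[j,i] + P[i,j]^T\bigr) z_j$. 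Hence, for every $\tilde{z}_i$ with $\tilde{x} \coloneqq (z_1,\dots,\tilde{z}_i,\dots,z_s) \in \Omega$,
\begin{equation*}
  |g_j(x) - g_j(\tilde{x})| \leq \bigl(\norm{P[j,i]} + \norm{P[i,j]}\bigr)\,\norm{z_j}\,\norm{z_i - \tilde{z}_i},
\end{equation*}
and the block-decay hypothesis on $P$ bounds the prefactor in terms of $\gamma(\distG(i,j))$; in the symmetric, undirected situation relevant to the applications one has $P[j,i] = P[i,j]^T$ and $\distG(i,j) = \distG(j,i)$, so $P[j,i] + P[i,j]^T = 2P[i,j]$ and the prefactor is controlled by $\gamma(\distG(i,j))$, giving the Lipschitz constant $L_{i,j} = \gamma(\distG(i,j))\norm{z_j}$ demanded by Definition~\ref{def:setting:sensitivity} and \eqref{eq:def_Lipschitz}. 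The final assertion then follows by specialising to $\gamma(\tau) = C\rho^\tau$, which indeed lies in $\L$.

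The argument is essentially bookkeeping and I do not anticipate a genuine obstacle. The one point that needs care is tracking which block of $P$ produces the coupling in each direction — $z_j$ appears in $g_j$ both through a row block ($P[j,\cdot]$) and through a column block ($P[\cdot,j]$), so the coupling to $z_i$ is governed by $P[j,i]$ and $P[i,j]$ together — and, equivalently, keeping the symmetrization $P + P^T$ straight when transferring the block decay of the Hessian to that of $P$ itself; in the symmetric case the two coincide up to the immaterial constant $2$.
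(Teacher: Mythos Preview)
Your approach is correct and matches the paper's one-line proof, which simply invokes Proposition~\ref{prop:Decay_Hessian} after identifying the Hessian of $V$ with $P$ (up to the factor $2$ you flag). Your explicit computation of $g_j$ is a legitimate alternative that makes the block bookkeeping transparent and, as you observe, dispenses with the convexity assumption on $\Omega$; the factor-of-$2$ slippage is shared by the paper's own argument and is indeed immaterial for the downstream estimates.
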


\begin{proof}
    Since $P$ is the Hessian of $V$, the claim follows directly from Proposition \ref{prop:Decay_Hessian}. 
\end{proof}  

\subsection{Construction of a Separable Approximation}
\label{sec:construction}

In the following, we examine the primary advantage of the decaying sensitivity property, which is that the global behavior of $V$ can be approximated by considering its behavior on local subsets of the graph, leading to a separable approximation of the function $V$.
\begin{definition} \label{def:separable}
Let $n, d \in \N$ with $d \leq n$ and $\Omega \subset \Rn$. A function $F \colon \Rn \to \R$ is called $d$-separable if there exist $k \in \N$ and functions $F_j \colon \R^{d_j} \to \R$ with $d_j \leq d$ for all $1 \leq j \leq k$ such that for all $x \in \Omega$ 
\begin{equation*}
    F(x) =  \sum_{j=1}^k F_j(y_j),
\end{equation*}
where for all $j \in \{ 1, \dots, k \}$ it holds that $y_j = (x_{\kappa_{j,1}}, \dots, x_{\kappa_{j,d_j}})$ for some $\kappa_{j,i} \in \{ 1, \dots, n \}$, $1 \leq i \leq d_j$.
\end{definition}

We consider the following construction for approximating $V$ by a separable function, first presented in \cite{sperl2023separable}. Although the construction therein was developed in the context of an optimal value function $V$, it directly extends to Setting~\ref{setting:general}, that is, to a general function $V$ defined on substates with an associated graph $\mathcal{G}$. The approximation depends on a parameter $l$, which determines the size of local neighborhoods in the graph $\mathcal{G}$ used for the separable representation. For any fixed $l \in \N$ it can be expressed as follows:

\begin{equation} \label{eq:SepApprox:CDC_approx}
    V(x) \approx V(0) + \Psi_l(x) =  V(0) + \sum_{j = 1}^s \Psi_l^j(z_{j,l}), \quad x \in \Omega, 
\end{equation}
where $z_{j,l}$ represents the vector containing the states of subsystem $z_j$ along with all subsystems $z_i$ with $i \geq j$ and $\distG(i,j) \leq l$. Let $d_{j,l} \in \N$ denote the dimension associated with $z_{j,l}$, i.e., $d_{j,l}$ is the sum of the dimensions of the subsystems that have an index larger than $j$ and a graph distance of at most $l$ from $z_j$. Furthermore, let $x_{j,l} \in \Rn$ denote the lifting of $z_{j,l}$ into $\Rn$ by inserting the value $0$ for all subsystems not included in $z_{j,l}$. The component functions $\Psi_l^j$ in \eqref{eq:SepApprox:CDC_approx} are defined as
\begin{equation} \label{eq:SepApprox:DefPsij}
    \Psi_l^j(z_{j,l}) \coloneqq V(x_{j,l}) - V(\Lambda_j x_{j,l}), 
\end{equation}
where $\Lambda_j \colon \Rn \to \Rn$ sets the values of subsystem $j$ to $0$, i.e., for $x = (z_1, \dots, z_s)$ it is 
\begin{equation*}
    \Lambda_j x = (z_1, \dots, z_{j-1}, 0, z_{j+1}, \dots, z_s). 
\end{equation*} 
Figure \ref{fig.NeighborhoodIllustration} illustrates a graph composed of five subsystems, where the graph distance is given by $\text{dist}_\mathcal{G}(i,j) = |i - j|$. According to our notation, in this example we have $z_{1,2} = (z_1, z_2, z_3)$, $x_{1,2} = (z_1, z_2, z_3, 0, 0)$, and $\Psi_2^1(z_{1,2}) = V(z_1, z_2, z_3, 0, 0) - V(0, z_2, z_3, 0, 0)$.
Note that \eqref{eq:SepApprox:CDC_approx} is based on a successive reduction of substates through the functions $\Psi_l^j$; see Figure \ref{fig:coordinates} for an illustration. Specifically, for $x = (z_1, \dots, z_s) \in \Omega$, we begin with the approximation
\[
V(x) \approx \Psi_l^1(z_{1,l}) + V(0, z_2, \dots, z_s).
\]
In the next step, we approximately have
\[
V(0, z_2, \dots, z_s) \approx \Psi_l^2(z_{2,l}) + V(0, 0, z_3, \dots, z_s).
\]
This procedure is continued iteratively until all substates are taken into account. The remaining term corresponds to the value at the origin, which then yields \eqref{eq:SepApprox:CDC_approx}. Note that in general the resulting approximation will depend on the numbering of the subsystems $z_j$.
 
\begin{remark}[Role of the parameter $l$] \label{rem.role_l}
    The expression in \eqref{eq:SepApprox:CDC_approx} constitutes a $d$-separable approximation of $V$ with $d \coloneqq \max_{j \in \agents} d_{j,l}$. The parameter $l$ plays a crucial role
   in this construction. On one hand, it determines the dimension of the subspaces associated with the vectors \( z_{j,l} \): smaller values of \( l \) reduce the number of substates included in \( z_{j,l} \), thereby decreasing \( d_{j,l} \) and consequently lowering the order of separability. This reduction is advantageous, as it results in lower-dimensional functions in the separable approximation of \( V \). On the other hand, larger values of $l$ account for longer graph distances, which generally lead to better approximation quality, see Lemma \ref{lem:SepApprox:QuadraticEstimate} in the following. In conclusion, $l$ provides a trade-off between sparsity and approximation quality. Moreover, note that while the presented construction is valid for all $l \in \mathbb{N}$, choosing $l \geq s$ yields the same graph neighborhoods as $l = s - 1$, since the graph distance between any two nodes in a graph with $s$ nodes is at most $s - 1$.  
\end{remark}

\begin{figure}[h!]
    \centering
    \begin{minipage}[t]{0.45\textwidth}
    \centering
    \includegraphics[scale=0.75]{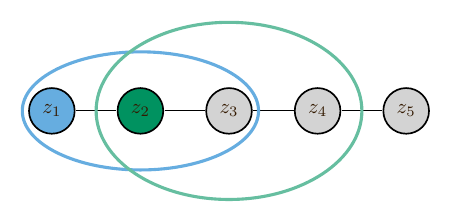}	
    \caption{Graph neighborhoods $z_{1,2}$ (left ellipse) and $z_{2,2}$ (right ellipse) in a sequential graph.}
		\label{fig.NeighborhoodIllustration}
    \end{minipage}%
    \hfill 
    \begin{minipage}[t]{0.45\textwidth}
    \centering
    \includegraphics[width=\linewidth]{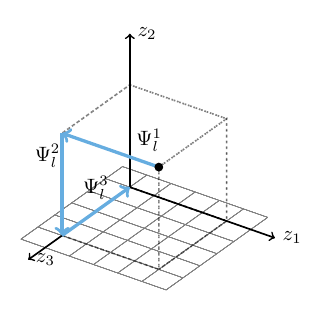}
    \caption{Stepwise isolation of the substates $z_j$ through the functions $\Psi_l^j$ illustrated for $s = 3$.}
    \label{fig:coordinates}
    \end{minipage}
\end{figure}

We now establish an error bound for the approximation in \eqref{eq:SepApprox:CDC_approx} based on the decaying sensitivity property introduced in Definition \ref{def:setting:sensitivity}. In contrast to Theorem 4 in \cite{sperl2023separable},  this error bound is designed to be independent of both the number of agents $s$ and the dimension $n$. However, as a trade-off, the result is restricted to $\L_2$ balls, instead of $\L_\infty$ balls, as presented in \cite[Theorem 4]{sperl2023separable}.  This modification is achieved through the revised concept of decaying sensitivity outlined in Definition \ref{def:setting:sensitivity}, as well as by leveraging the interplay between the type of decay and the growth of the graph as the dimension increases. 
\begin{lemma} \label{lem:SepApprox:QuadraticEstimate}
     Assume that $V$ has the $\gamma$-decaying sensitivity property. Then, for any $l \in \N$ and $x \in \Omega$, we have 
    \begin{equation*}
        \lvert V(x) - \Psi_l(x) - V(0) \rvert \leq \lVert x \rVert_2^2 \lVert D_l \rVert_2, 
    \end{equation*}
    where $D_l \in \R^{s \times s}$ is given by 
    \begin{equation}\label{def.D}
        D_l[i,j] := \begin{cases}
        \gamma({\distG(i,j)}), \quad & \text{if} \, \distG(i,j) > l, \\
       0, \quad & \text{otherwise}. 
    \end{cases}
    \end{equation}
\end{lemma}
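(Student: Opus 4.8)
The plan is to exploit the telescoping/successive-reduction structure of the approximation \eqref{eq:SepApprox:CDC_approx} and control each step of the telescope using the $\gamma$-decaying sensitivity property. First I would write the error exactly as a sum over $j$ of the defect made when passing from $V(0,\dots,0,z_j,z_{j+1},\dots,z_s)$ to $\Psi_l^j(z_{j,l}) + V(0,\dots,0,0,z_{j+1},\dots,z_s)$. Concretely, set $w_j := (0,\dots,0,z_j,\dots,z_s)$ (the first $j-1$ blocks zeroed out), so that $w_1 = x$ and $w_{s+1} = 0$; then $V(x) - V(0) = \sum_{j=1}^s \big(V(w_j) - V(w_{j+1})\big)$, while $\Psi_l(x) = \sum_{j=1}^s \Psi_l^j(z_{j,l})$, and hence
\begin{equation*}
    V(x) - \Psi_l(x) - V(0) = \sum_{j=1}^s \Big( \big(V(w_j) - V(w_{j+1})\big) - \Psi_l^j(z_{j,l}) \Big).
\end{equation*}
The quantity $V(w_j) - V(w_{j+1})$ is exactly $g_j(w_j)$ in the notation of \eqref{eq:setting:def_gj} (applied to the point $w_j$, whose blocks with index $<j$ are already $0$), and $\Psi_l^j(z_{j,l}) = g_j(x_{j,l})$ where $x_{j,l}$ is $w_j$ with all blocks $z_i$ for $i<j$ or ($i>j$ and $\distG(i,j)>l$) set to $0$. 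So the $j$-th defect is $g_j(w_j) - g_j(x_{j,l})$, and $x_{j,l}$ differs from $w_j$ only in the blocks $z_i$ with $i>j$ and $\distG(i,j)>l$.

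The second step is to bound $|g_j(w_j) - g_j(x_{j,l})|$ by changing one such block at a time and applying Definition \ref{def:setting:sensitivity}: each time we switch a block $z_i$ (with $i>j$, $\distG(i,j)>l$) from its value in $w_j$ to $0$, the change in $g_j$ is at most $L_{i,j} = \gamma(\distG(i,j))\|z_j\|$ times the norm of that block's change, which is at most $\|z_i\|$ (since we go to $0$); here the intermediate points all lie in $\Omega$ because — and this is the one place I need to be slightly careful — I should order the block-switches so that every intermediate point is of the form "$w_j$ with some subset of far-away blocks zeroed," and argue these stay in $\Omega$; for an $\mathcal{L}_2$ ball centered at $0$, zeroing out coordinates only decreases the $2$-norm, so this is immediate (this is precisely why the result is stated for $\mathcal{L}_2$ balls). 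This yields
\begin{equation*}
    |g_j(w_j) - g_j(x_{j,l})| \leq \sum_{\substack{i>j \\ \distG(i,j)>l}} \gamma(\distG(i,j))\, \|z_j\|\, \|z_i\| = \sum_{\substack{i>j \\ \distG(i,j)>l}} D_l[i,j]\, \|z_j\|\,\|z_i\|.
\end{equation*}

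The final step is to sum over $j$ and recognize a quadratic form. Summing, the total error is at most $\sum_{j=1}^s \sum_{i>j,\,\distG(i,j)>l} D_l[i,j]\|z_i\|\|z_j\|$; letting $v \in \R^s$ be the vector with entries $v_j := \|z_j\|$, and using that $D_l$ as defined in \eqref{def.D} is symmetric with zero diagonal (so the sum over $i>j$ is exactly $\tfrac12 v^T D_l v$ if I double it, or directly $v^T D_l' v$ for the upper-triangular part — I'll just bound it by $\|D_l\|_2 \|v\|_2^2$), I get
\begin{equation*}
    |V(x) - \Psi_l(x) - V(0)| \leq \sum_{i,j} D_l[i,j]\, v_i v_j \;\text{(up to the triangular convention)} \;\leq\; \|D_l\|_2 \, \|v\|_2^2,
\end{equation*}
and finally $\|v\|_2^2 = \sum_j \|z_j\|^2 = \|x\|_2^2$, which gives the claimed bound. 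The main obstacle I anticipate is bookkeeping rather than a deep difficulty: getting the index sets in the telescoping exactly right (which blocks are already zero in $w_j$, which get zeroed in forming $x_{j,l}$, and that the "$i \geq j$" convention in the definition of $z_{j,l}$ matches the "$i>j$" appearing here because the block $z_j$ itself is handled by $g_j$ directly), and verifying that all intermediate points of the one-block-at-a-time argument remain in the $\mathcal{L}_2$ ball $\Omega$ so that Definition \ref{def:setting:sensitivity} applies. A minor additional point is to confirm that $\frac{1}{2}v^T(D_l + D_l^T)v = v^T D_l v \le \|D_l\|_2\|v\|_2^2$ absorbs the triangular sum correctly given the factor-of-two conventions.
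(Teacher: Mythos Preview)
Your proposal is correct and follows essentially the same strategy as the paper: express the error as $\sum_j\big(g_j(w_j)-g_j(x_{j,l})\big)$, bound each term by one-block-at-a-time changes via the $\gamma$-decaying sensitivity, and then dominate the resulting triangular sum by the full quadratic form $v^\top D_l v\le\|D_l\|_2\|v\|_2^2$ with $v_j=\|z_j\|$. The only cosmetic difference is that the paper first notes $\Psi_{s-1}(x)=V(x)-V(0)$ exactly and then telescopes $\Psi_{s-1}^j-\Psi_l^j$ over the neighborhood radius $k$ (grouping the block-changes by distance), whereas you pass directly from $w_j$ to $x_{j,l}$; the two computations are equivalent, and your care about intermediate points staying in $\Omega$ (valid for $\ell_2$ balls) is a detail the paper's proof leaves implicit.
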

\begin{proof}
Initially, we observe that for any $i, j \in \agents$ we have that either $\distG(i,j) \leq s-1 $ or $\distG(i,j) = \infty$. In the latter case it follows that $V(x) - V(\Lambda_j x)$ is independent of $z_i$. Thus, by considering graph-neighborhoods of the size $s-1$ we can express $V(x)$ as 
$V(x) = V(0) + \sum_{j=1}^s \Psi_{s-1}^j(z_{j, s-1})$. This shows the claim for $l \ge s-1$. For the case $l \le s-2$ we use a telescoping sum to write  
\begin{align}       \begin{split}\label{eq:SepApprox:TelescopeV}
        & \Big| {V(x) - \sum_{j=1}^s \Psi_l^j(z_{j,l}) - V(0)}  \Big| =  \Big| {\sum_{j=1}^s \Psi_{s-1}^j(z_{j,s-1}) - \sum_{j=1}^s \Psi_l^j(z_{j,l})}  \Big| \\ \leq & \sum_{j=1}^s \sum_{k = l}^{s-2} \abs{\Psi_{k+1}^j(z_{j,k+1}) - \Psi_{k}^j(z_{j,k})}. 
    \end{split}
\end{align}
Now, fix some $j \in \agents$ and $\tilde{l} \in \{l, \dots, s-2\}$. Let $\tau \in \N$ and $i_1, \dots, i_\tau \in \agents$ such that $\lbrace i \in \agents \mid \distG(j,i) = \tilde l \rbrace = \lbrace i_1, \dots, i_\tau \rbrace $.
Then, by the definitions of $\Psi_l^j$ in \eqref{eq:SepApprox:DefPsij} and $g_j$ in \eqref{eq:setting:def_gj} we have
    \begin{align*}
        & \Psi_{\tilde l}^j(z_{j,{\tilde l}}) -\Psi_{{\tilde l}+1}^j(z_{j,{\tilde l}+1}) = V(x_{j,{\tilde l}}) - V(\Lambda_j x_{j,{\tilde l}}) - V(x_{j,{\tilde l}+1}) + V(\Lambda_j x_{j,{\tilde l}+1}) \\ 
        = &  \sum_{p = 1}^\tau g_j(x_{j, {\tilde l}+1}^{(p)}) - g_j(x_{j, {\tilde l}}^{(p)}), 
    \end{align*}
    where for each $1 \leq p \leq \tau$ the vectors $x_{j, {\tilde l}+1}^{(p)}$ and $x_{j, {\tilde l}}^{(p)}$ are obtained by setting all entries corresponding to $z_{i_1}, \dots, z_{i_p}$ to $0$ in the vectors $x_{j, {\tilde l}+1}$ and $x_{j,{\tilde l}}$, respectively. By successively applying the $\gamma$-decaying sensitivity property as defined in Definition \ref{def:setting:sensitivity}, we obtain
    \begin{equation*}
        \abs{ \Psi_{\tilde l+1}^j(z_{j,\tilde l+1}) -\Psi_{\tilde l}^j(z_{j,\tilde l})} \leq \gamma(\tilde l+1) \sum_{p=1}^\tau \norm{z_{i_p}} \norm{z_j}. 
    \end{equation*}
    From this, using \eqref{eq:SepApprox:TelescopeV} we can derive
    \begin{align*}
         & \Bigg| {V(x) - \sum_{j=1}^s \Psi_l^j(z_{j,l}) - V(0)} \Bigg|  \leq \sum_{j=1}^s \sum_{k = l}^{s-2} \; \sum_{i: \distG(i,j) = k+1} \norm{z_i} \gamma(k+1) \norm{z_j} \\ 
         = &  \sum_{j=1}^s \sum_{i: \distG(i,j) > l} \norm{z_i} \gamma(\distG(i,j)) \norm{z_j}
         =  \sum_{j=1}^s \norm{z_j} \sum_{i = 1}^s D_l[i,j] \norm{z_i} \\
         = & \begin{bmatrix}
            \norm{z_1} \\
            \vdots \\
            \norm{z_s}
            \end{bmatrix}^T D_l \begin{bmatrix}
            \norm{z_1} \\
            \vdots \\
            \norm{z_s}
            \end{bmatrix} \leq  
             \norm{\begin{bmatrix}
            \norm{z_1} \\
            \vdots \\
            \norm{z_s}
            \end{bmatrix}}^2 \lVert D_l \rVert_2 = \lVert x \rVert_2^2 \lVert D_l \rVert_2,  
    \end{align*}
    which shows the claim. 
\end{proof}
In particular, Lemma~\ref{lem:SepApprox:QuadraticEstimate} shows that increasing the graph distance $l$ improves the approximation quality. Specifically, for $l \geq s - 1$, we have $D_l = 0$, which implies $\Psi_l + V(0) = V$. However, in this case, the local state $z_{1,l}$ already contains all substates connected to $z_1$, resulting in a high order of separability. In view of our objective to approximate $V$ using low-dimensional functions, it is therefore not desirable to choose $l \ge s - 1$. Instead, $l$ should be selected as a trade-off between approximation accuracy and numerical efficiency, cf. Remark~\ref{rem.role_l}.

In the following, we specify the error bound derived from Lemma \ref{lem:SepApprox:QuadraticEstimate} for both exponential and polynomial decay scenarios. For this, it is essential to quantify the growth of neighborhoods within the underlying graph.  
We define a function $r \colon \N \to \N$ as a growth bound for the graph $\mathcal{G}$ if it satisfies the condition: 
    \begin{equation} \label{eq.graphBound}
        \forall l \in \agents, j \in \agents:  \; \left\lvert \lbrace i \in \agents \mid \distG(j,i) = l \rbrace \right\rvert \leq r(l).
    \end{equation}
Note that if $r$ is a subexponential function, condition \eqref{eq.graphBound} coincides with Assumption 3.4 in \cite{shin2022lqr}, which is used in Corollary 3.5 therein to show the exponential accuracy of a truncated feedback controller for time-discrete linear quadratic optimal control problems. Further, we require the following lemma concerning induced matrix norms.
\begin{lemma}[Corollary 2.3.2. in \cite{golub2013matrix}] \label{lem.MatrixNormEstimate}
    Let $A \in \R^{s \times s}$. Then it holds that 
    \begin{equation*}
        \lVert A \rVert_2 \leq \sqrt{\lVert A \rVert_1 \lVert A \rVert_\infty}.
    \end{equation*}
\end{lemma}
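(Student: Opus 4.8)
Since the statement is a textbook fact (indeed it is quoted verbatim as Corollary~2.3.2 in \cite{golub2013matrix}), the honest answer is that I would simply cite it. For completeness, here is the short argument I would reproduce. The plan is to bound the spectral norm $\lVert A \rVert_2$ by the spectral radius of $A^T A$, and then control that spectral radius by the far more elementary induced $1$- and $\infty$-norms.

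First, recall that $A^T A$ is symmetric and positive semidefinite, so all of its eigenvalues are real and nonnegative, and by the definition of the spectral norm we have $\lVert A \rVert_2^2 = \lambda_{\max}(A^T A) = \rho(A^T A)$, where $\rho(\cdot)$ denotes the spectral radius. Next I would invoke the standard inequality $\rho(B) \leq \lVert B \rVert$, valid for every submultiplicative matrix norm and in particular for every induced $p$-norm, applied to $B = A^T A$ with the induced $1$-norm: $\rho(A^T A) \leq \lVert A^T A \rVert_1$. Submultiplicativity of the induced $1$-norm then gives $\lVert A^T A \rVert_1 \leq \lVert A^T \rVert_1 \, \lVert A \rVert_1$, and since the induced $1$-norm is the maximum absolute column sum while the induced $\infty$-norm is the maximum absolute row sum, we have $\lVert A^T \rVert_1 = \lVert A \rVert_\infty$. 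Chaining these relations yields $\lVert A \rVert_2^2 \leq \lVert A \rVert_1 \lVert A \rVert_\infty$, and taking square roots gives the claim.

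There is essentially no obstacle here: every step is a classical property of matrix norms. The only point that warrants a word of care is the identity $\lVert A^T \rVert_1 = \lVert A \rVert_\infty$, which is immediate once one writes out the column-sum and row-sum characterizations of the two induced norms. An alternative, equally short route would be to apply the Riesz--Thorin / Schur interpolation inequality $\lVert A \rVert_2 \leq \lVert A \rVert_1^{1/2} \lVert A \rVert_\infty^{1/2}$ directly, but the spectral-radius argument above is the more self-contained one and is the one I would include.
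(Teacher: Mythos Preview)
Your proposal is correct, and in fact the paper does not supply a proof at all: it simply states the inequality and cites Corollary~2.3.2 in \cite{golub2013matrix}. Your spectral-radius argument is the standard one behind that citation, so you are doing exactly what the paper does, only with the details spelled out.
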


\begin{theorem}[Exponential Decay]\label{thm.ExpDecay}
Assume that $V$ possesses the $\gamma$-decaying sensitivity property with $\gamma(l) = C \rho^{l}$ for some $C > 0$ and $\rho \in (0,1)$, and that there exists a growth bound $r \colon \N \to \N$ for the underlying graph, satisfying  $r(k) \leq \widehat{C} \mu^k$, $k \in \N$, for some $\widehat{C} > 0$ and $\mu \in [1, \rho^{-1})$. Then for all $l \in \N$ and all $x \in \Omega$ it holds 
    \begin{equation*}
        \lvert V(x) - \Psi_l(x) - V(0) \rvert \leq \widetilde{C} \delta^{l+1} \lVert x \rVert_2^2, 
    \end{equation*}
    where $\widetilde{C} > 0$ and $\delta \in (\rho \mu, 1)$ are independent of $s$ and $n$.  
\end{theorem}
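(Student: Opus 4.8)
The plan is to start from Lemma~\ref{lem:SepApprox:QuadraticEstimate}, which already reduces the claim to bounding the spectral norm of the decay matrix $D_l$ from \eqref{def.D}: once we show $\lVert D_l\rVert_2 \le \widetilde C\,\delta^{l+1}$ for constants $\widetilde C>0$ and $\delta\in(\rho\mu,1)$ independent of $s$ and $n$, combining this with Lemma~\ref{lem:SepApprox:QuadraticEstimate} gives exactly the asserted estimate $\lvert V(x)-\Psi_l(x)-V(0)\rvert \le \widetilde C\,\delta^{l+1}\lVert x\rVert_2^2$. So the whole proof amounts to an $s$- and $n$-uniform bound on $\lVert D_l\rVert_2$.

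To avoid dimension dependence, I would not estimate $\lVert D_l\rVert_2$ directly but instead invoke Lemma~\ref{lem.MatrixNormEstimate}, so that $\lVert D_l\rVert_2\le\sqrt{\lVert D_l\rVert_1\,\lVert D_l\rVert_\infty}$. Both $\lVert D_l\rVert_1$ and $\lVert D_l\rVert_\infty$ are maxima over a single index of a column or row sum of $D_l$, and such a sum — say the one at a fixed node $j$ — has, by grouping nodes according to their graph distance, the form $\sum_{k=l+1}^{\infty}\lvert\{i\in\agents : \distG(i,j)=k\}\rvert\,\gamma(k)$ (the sum is in fact finite, since $\distG$ is either at most $s-1$ or $\infty$, and in the latter case the corresponding $D_l$-entry vanishes). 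Applying the growth bound \eqref{eq.graphBound} (in both directions, to cover the row sums and the column sums) bounds each such sum by $\sum_{k=l+1}^{\infty} r(k)\gamma(k)$, a quantity that no longer refers to $s$ or $n$.

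It then remains to insert the hypotheses $\gamma(k)=C\rho^k$ and $r(k)\le\widehat C\mu^k$, giving $r(k)\gamma(k)\le C\widehat C(\rho\mu)^k$; this is precisely the point where the assumption $\mu\in[1,\rho^{-1})$, i.e.\ $\rho\mu<1$, is essential, as it makes the series convergent with a sum controlled independently of $s$. Fixing any $\delta\in(\rho\mu,1)$ and using $(\rho\mu)^k\le\delta^k$, the geometric series yields
\[
\sum_{k=l+1}^{\infty} r(k)\,\gamma(k) \;\le\; C\widehat C\sum_{k=l+1}^{\infty}(\rho\mu)^k \;\le\; C\widehat C\sum_{k=l+1}^{\infty}\delta^k \;=\; \frac{C\widehat C}{1-\delta}\,\delta^{l+1}.
\]
Hence $\lVert D_l\rVert_1,\lVert D_l\rVert_\infty\le \frac{C\widehat C}{1-\delta}\,\delta^{l+1}$, so $\lVert D_l\rVert_2\le \frac{C\widehat C}{1-\delta}\,\delta^{l+1}=:\widetilde C\,\delta^{l+1}$, and Lemma~\ref{lem:SepApprox:QuadraticEstimate} concludes. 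There is no deep obstacle here: the calculation is short, and the only point that requires real care is the interplay between the sensitivity decay rate $\rho$ and the neighborhood growth rate $\mu$ — one must recognize that the hypothesis $\mu<\rho^{-1}$ is exactly what keeps $\sum_k r(k)\gamma(k)$ summable with a constant free of $s$ and $n$, and that passing from $\rho\mu$ to a slightly larger $\delta<1$ is what allows the clean closed-form constant $\widetilde C=C\widehat C/(1-\delta)$.
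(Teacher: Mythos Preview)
Your proposal is correct and follows essentially the same route as the paper: reduce via Lemma~\ref{lem:SepApprox:QuadraticEstimate} to bounding $\lVert D_l\rVert_2$, control this through $\lVert D_l\rVert_1$ and $\lVert D_l\rVert_\infty$ using Lemma~\ref{lem.MatrixNormEstimate}, group the row/column sums by graph distance, apply the growth bound $r$, and sum the resulting geometric series in $\rho\mu<\delta<1$. The only cosmetic difference is that the paper first factors out $\sup_{k}\{r(k)(\rho/\delta)^k\}$ before inserting $r(k)\le\widehat C\mu^k$, whereas you substitute the bound on $r(k)$ immediately; both yield the same constant $\widetilde C=C\widehat C/(1-\delta)$.
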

\begin{proof}
Let $l \in \N$, $D_l$ as in \eqref{def.D}, $i \in \agents$, and $\delta \in (\rho \mu, 1)$. Then it holds
\begin{align*}
    & \sum_{j=1}^s \lvert D_l[i,j] \rvert = \sum_{j \in \agents: \distG(i,j) > l} \gamma(\distG(i,j)) \leq \sum_{k = l+1}^\infty \sum_{j : \distG(i,j) = k} \gamma(k) \\ 
    & \leq  C \sum_{k = l+1}^\infty r(k) \rho^k 
    \leq C \sup_{k \in \N} \left\lbrace r(k) \left(\frac{\rho}{\delta}\right)^k \right\rbrace \frac{\delta}{1-\delta} \delta^l. 
\end{align*}
Thus, we obtain 
\begin{align*}
    & \lVert  D_l \rVert_\infty = \max_{i \in [s]} \sum_{j \in [s]} \lvert  D_l[i,j] \rvert 
    \leq C \sup_{k \in \N} \left\lbrace r(k)  \left(\frac{\rho}{\delta}\right)^k \right\rbrace \frac{\delta^{l+1}}{1-\delta} 
        \end{align*}
    \begin{align*}
    \leq C \widehat{C} \sup_{k \in \N} \left\lbrace \mu^k  \left(\frac{\rho}{\delta}\right)^k \right\rbrace \frac{\delta^{l+1}}{1-\delta}  \leq C \widehat{C} \frac{\delta^{l+1}}{1-\delta} \delta^{l+1} =: \widetilde{C}  \delta^{l+1}.
\end{align*}
Analogously, we obtain the same bound for $\norm{D_l}_1$. Finally, using Lemma \ref{lem:SepApprox:QuadraticEstimate} and Lemma \ref{lem.MatrixNormEstimate} yields 
\begin{equation*}
    \lvert V(x) - \Psi_l(x) - V(0) \rvert \leq \lVert x \rVert_2^2 \lVert D_l \rVert_2 \leq \sqrt{\lVert D_l \rVert_1 \lVert D_l \rVert_\infty} \leq \widetilde{C}  \delta^{l+1}. 
\end{equation*}
This shows the claim. 
\end{proof}
Note that the growth bound required in Theorem \ref{thm.ExpDecay} is met if the number of agents with a graph distance of $l$ grows at most polynomial in $l$. If the growth rate is exponential, its base must be less than the reciprocal of the exponential decay base. Similarly to Theorem \ref{thm.ExpDecay}, one can establish a bound on the approximation error assuming a \emph{polynomial} decay in sensitivity. 

\begin{theorem}[Polynomial Decay]\label{thm.PolyDecay}
   Assume that $V$ possesses the $\gamma$-decaying sensitivity property with $\gamma(l) = C (l+1)^{-\alpha}$ for some $C > 0$ and $\alpha > 1$. Furthermore, assume that there exists a growth bound $r \colon \N \to \N$ for the underlying graph and some $\beta < \alpha - 1 $ such that $r \in \mathcal{O}(l^{\beta})$ for $l \to \infty$. Then for all $l \in \N$ it holds that for all $x \in \Omega$
    \begin{equation*}
        \lvert V(x) - \Psi_l(x) - V(0) \rvert \leq \widetilde{C} \lVert x \rVert_2^2 \sum_{k = l+2}^\infty k^{-\alpha + \beta}, 
    \end{equation*}
    where $\widetilde{C} > 0$ is independent of $s$ and $n$.  
\end{theorem}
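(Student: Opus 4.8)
The plan is to follow the same strategy as in the proof of Theorem~\ref{thm.ExpDecay}, simply replacing the geometric tail estimates by the ones appropriate for a polynomial decay rate. First I would invoke Lemma~\ref{lem:SepApprox:QuadraticEstimate}, which reduces the claim to an estimate on $\lVert D_l \rVert_2$ for the matrix $D_l \in \R^{s\times s}$ defined in \eqref{def.D}. By Lemma~\ref{lem.MatrixNormEstimate} it then suffices to bound $\lVert D_l \rVert_1$ and $\lVert D_l \rVert_\infty$ separately and use $\lVert D_l\rVert_2 \le \sqrt{\lVert D_l \rVert_1 \lVert D_l \rVert_\infty}$. Note that the case $l \ge s-1$ is trivial since then $D_l = 0$, so we may think of $l$ as small.

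To estimate $\lVert D_l \rVert_\infty$, fix a row index $i \in \agents$ and group the nonzero entries of that row by graph distance. Using the growth bound \eqref{eq.graphBound} and $\gamma(k) = C(k+1)^{-\alpha}$,
\begin{equation*}
  \sum_{j=1}^s \abs{D_l[i,j]} \;=\; \sum_{k=l+1}^{\infty} \; \sum_{j:\distG(i,j)=k} \gamma(k) \;\le\; C \sum_{k=l+1}^{\infty} r(k)(k+1)^{-\alpha}.
\end{equation*}
Since $r \in \mathcal{O}(l^\beta)$, there is a constant $\widehat{C}>0$ with $r(k) \le \widehat{C}\,k^\beta$ for all $k \ge 1$; the finitely many initial indices are absorbed into $\widehat{C}$, and this works regardless of the sign of $\beta$ (for $\beta<0$ one uses monotonicity of $t\mapsto t^\beta$ to bound the small-$k$ terms). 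After the index shift $m = k+1$ together with the elementary inequality $(m-1)^\beta \le 2^{\abs{\beta}} m^\beta$ (valid for $m \ge 2$ in both sign regimes), one arrives at
\begin{equation*}
  \lVert D_l \rVert_\infty \;=\; \max_{i \in \agents} \sum_{j=1}^s \abs{D_l[i,j]} \;\le\; C\widehat{C}\,2^{\abs{\beta}} \sum_{m=l+2}^{\infty} m^{\beta-\alpha},
\end{equation*}
and the series on the right converges precisely because $\beta < \alpha - 1$, i.e. $\alpha-\beta>1$. The analogous column-wise estimate gives the same bound for $\lVert D_l \rVert_1$.

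Combining the two bounds via Lemma~\ref{lem.MatrixNormEstimate} yields $\lVert D_l \rVert_2 \le \widetilde{C}\sum_{k=l+2}^{\infty} k^{-\alpha+\beta}$ with $\widetilde{C} := C\widehat{C}\,2^{\abs{\beta}}$, which is independent of $s$ and $n$ since $C$, $\widehat C$ and $\beta$ are. Plugging this into Lemma~\ref{lem:SepApprox:QuadraticEstimate} gives exactly the asserted inequality. The only genuinely delicate points are bookkeeping: passing from the asymptotic $\mathcal{O}(l^\beta)$ hypothesis to a pointwise bound $r(k)\le\widehat C k^\beta$ with a constant not depending on the graph size, and arranging the index shift so that the final tail sum matches the form $\sum_{k=l+2}^{\infty} k^{-\alpha+\beta}$ in the statement; the convergence of that series needs no extra argument because it is guaranteed by the standing assumption $\beta<\alpha-1$.
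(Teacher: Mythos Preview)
Your proof is correct and follows essentially the same approach as the paper: reduce to $\lVert D_l\rVert_2$ via Lemma~\ref{lem:SepApprox:QuadraticEstimate}, control it through $\lVert D_l\rVert_1$ and $\lVert D_l\rVert_\infty$ via Lemma~\ref{lem.MatrixNormEstimate}, and estimate the row sums by grouping entries according to graph distance. The only cosmetic difference is that the paper avoids your index-shift gymnastics by writing $r(k)(k+1)^{-\alpha} = \big[r(k)(k+1)^{-\beta}\big](k+1)^{-\alpha+\beta}$ and pulling out $\widetilde C := C\sup_{k\in\N}\{r(k)(k+1)^{-\beta}\}$, after which the tail sum $\sum_{k\ge l+1}(k+1)^{-\alpha+\beta}=\sum_{m\ge l+2}m^{-\alpha+\beta}$ appears directly.
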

\begin{proof}
Analogously to the proof of Theorem \ref{thm.ExpDecay}, we obtain 
\begin{align*}
    \lVert  D_l \rVert_\infty \leq  \sum_{k = l+1}^\infty r(k) C (k+1)^{- \alpha} 
    \leq C \sup_{k \in \N} \left\lbrace r(k) (k+1)^{-\beta}  \right\rbrace \sum_{k = l+2}^\infty k^{-\alpha + \beta}. 
\end{align*}
Again, combining Lemma \ref{lem:SepApprox:QuadraticEstimate} and Lemma \ref{lem.MatrixNormEstimate} yields the claim with \linebreak $\widetilde{C} := C \sup_{k \in \N} \left\lbrace r(k) (k+1)^{-\beta}  \right\rbrace$. 
\end{proof}

\begin{remark}
Theorems~\ref{thm.ExpDecay} and~\ref{thm.PolyDecay} are stated for a fixed dimension $n$ and a fixed number of subsystems $s$. However, the resulting approximations are dimension-independent, as the approximation quality does not degrade with increasing $s$ or $n$. Consequently, if we consider a family of problems with increasing dimension and the assumptions are satisfied uniformly with respect to the dimension, i.e., each member of the family admits the same $\gamma$-decaying sensitivity and the same growth-bound $r$, then the resulting approximations are also uniform in the dimension.\end{remark}

\begin{remark}[Anchored decomposition]
The separable approximation is strongly related to the notion of anchored decomposition \cite{kuo2010decompositions,rieger2024approximability}. For a given function $f: \mathbb{R}^n \rightarrow \mathbb{R}$ and an anchor $c  \in \mathbb{R}^n$, the anchored decomposition of the function $f$ with respect to the anchor $c$ is defined as
\begin{equation}
    f(x) = \sum_{ u \subseteq [1,\ldots,d]}f_{u,c}(x_u),
\end{equation}
where $f_{u,c}$ satisfies the following properties
\begin{equation}
f_{\varnothing,c} = f(c), \quad f_{u,c}(x_u) = f((x_u;c)) - \sum_{v \subsetneq u } f_{v,c} (x_v) .
\label{def_ancho}
\end{equation}

Here the notation \((x_u; c)\in\mathbb{R}^d\) denotes the vector obtained by inserting the anchor \(c_i\) into every coordinate \(i\notin u\) and the free variable \(x_i\) into every coordinate \(i\in u\).  Equivalently, if
\[
    y_i \;=\;
    \begin{cases}
      x_i, & i\in u,\\
      c_i, & i\notin u,
    \end{cases}
    \quad i=1,\dots,d,
\]
then
\[
    (x_u; c) \;=\; (y_1,\dots,y_d),
    \qquad
    f\bigl((x_u; c)\bigr) \;=\; f(y_1,\dots,y_d).
\]

The key idea behind anchored decomposition is to express a high-dimensional function $f(z_1,\ldots,z_s)$
 as a sum of terms that depend on a progressively larger number of variables, starting from simpler, lower-dimensional components. This decomposition allows simplifying the computation by focusing on low-dimensional interactions (e.g., pairwise or three-variable interactions) when possible, making high-dimensional problems more tractable. 
The $d$-separable decomposition shares some terms of the anchored decomposition and neglects those which include indices far away according to the graph distance.

For example, let us consider the term $f_{[i,j,k],c}$, which accounts for three elements in the set $\{1,\ldots,s\}$ and assume $d_{\mathcal G}(z_i,z_k) \gg 1$.
We also introduce the notation $f_{k,c}(z_{i_1},\ldots,z_{i_k})$, which is the evaluation of the function $f$ with the variables not in the list fixed at the anchor $c$.
To maintain consistency with the construction outlined in Section \ref{sec:construction}, we fix the anchor point $c$ at the origin and henceforth omit its explicit dependence by writing $f_{[i,j,k]}$ and $f_{k}(x_{i_1},\ldots,x_{i_k})$.
By property \eqref{def_ancho}, we have
\begin{equation}
f_{[i,j,k]} = f_3(z_i,z_j,z_k)-f_{[i,j]} -f_{[j,k]}-f_{[i,k]}-f_{[i]}-f_{[j]}-f_{[k]}-f_{\varnothing}.
\label{eq:step1}
\end{equation}
Again, by definition,
$$
f_{[i_1,i_2]}= f_2(z_{i_1},z_{i_2})- f_{[i_1]}-f_{[i_2]}-f_{\varnothing}, \quad i_1,i_2 \in \{1,\ldots,s\},
$$
$$
f_{[i_1]} = f_1(z_{i_1}) -f_{\varnothing},\quad i_1 \in \{1,\ldots,s\}.
$$
Substituting these into \eqref{eq:step1} and canceling terms, we obtain
$$
f_{[i,j,k]} = f_3(z_i,z_j,z_k)-f_2(z_i,z_j)-f_2(z_j,z_k)-f_2(z_i,z_k)+f_1(z_i)+f_1(z_j)+f_1(z_k)-f_{\varnothing}.
$$
This expression can be further  rewritten in terms of the functions $\{g_j\}_j$ introduced in \eqref{eq:setting:def_gj}, as follows
$$
f_{[i,j,k]} = g_k(z_i,z_j,z_k)-g_k(z_j,z_k)-g_i(z_i,z_k)+g_i(z_i),
$$
where, as before, any variables not explicitly included in the list of entries are fixed at the anchor point.
Now, assume that the function $f$ satisfies the $\gamma$-decaying sensitivity property introduced in Definition \ref{def:setting:sensitivity}. Then, the term $f_{[i,j,k]}$ can be estimated using the Lipschitz continuity of the functions $g_k$ and $g_i$
$$
|f_{[i,j,k]}| \le 2 \gamma(d_{\mathcal G}(z_i,z_k)) \Vert z_k \Vert \Vert z_i \Vert.
$$
We note that the decay of the element $f_{[i,j,k]}$ depends solely on the reciprocal distance and positions of the nodes $z_i$ and $z_k$, rather than on the node $z_j$. 

In summary, anchored decomposition and separable decomposition share a common goal of simplifying high-dimensional functions by focusing on lower-dimensional interactions. 
We note that each separable component \(\Psi_l^j\) can be interpreted as a localized aggregation of the anchored decomposition terms with anchor $c = 0$. Specifically, \(\Psi_l^j\) corresponds to the sum of all functions \(f_{u,0}\) for which the index set \(u\) contains \(j\) and any $i \in u$ satisfies $i \geq j$ and $\distG(i,j) \leq l$. This yields the identity
\[
\Psi_l^j(z_{j,l})
=
\sum_{\substack{
  u\subseteq\{j,j+1,\dots,s\}\\
  j\in u,\;\max_{i\in u}\mathrm{dist}_{\mathcal{G}}(i,j)\le l
}}
f_{u,0}(z_u),
\]
which makes explicit the structural relationship between our separable representation \eqref{eq:SepApprox:CDC_approx} and the anchored decomposition. 
While anchored decomposition systematically includes all terms based on variable subsets, separable decomposition selectively neglects terms involving distant variables, leveraging graph-based distances. This distinction allows separable decomposition to achieve greater computational efficiency by emphasizing localized interactions, making it particularly suited for functions with decaying sensitivity over distance.
\end{remark}

\section{Sensitivity decay in linear-quadratic optimal control} \label{sec.ExistenceDecay}

In this section, we show that the decaying sensitivity property appears naturally in the infinite-horizon Linear Quadratic Regulator (LQR). In Subsection \ref{sec:dislqr} we recall results from \cite{sperl2023separable} (relying on \cite{shin2022lqr}) for discrete-time problems while in Subsection \ref{sec:contlqr} we prove a novel result for continuous-time problems, though under stronger conditions than in the discrete-time case.

Based on Setting \ref{setting:general}, we define the state $x \in \mathbb{R}^n$ as being composed of $s \leq n$ substates $x = (z_1, \dots, z_s)$, where each substate $z_j$ may have an associated control $u_j \in \mathbb{R}^{m_j}$ for $j \in \agents$. Whenever a substate does not have an associated control, we set $m_j = 0$. The collection of all controls is denoted by $u = (u_1, \dots, u_s) \in \mathbb{R}^m$, with $m = \sum_{j=1}^s m_j$. We denote by
$
  \mathcal U_d
  =
  \ell^\infty(\N,\R^m)
$
the space of all bounded control sequences in the discrete‐time case, and by
$
  \mathcal U_c
  =L^\infty([0,\infty),\R^m)
$
the space of all essentially bounded control sequences in the continuous‐time case.

Given matrices $A \in \mathbb{R}^{n \times n}$, $B \in \mathbb{R}^{n \times m}$, $Q \in \mathbb{R}^{n \times n}$, $Q\succeq 0_n$, and $R \in \mathbb{R}^{m \times m}$, $R\succ 0_m$, we consider an infinite-horizon optimal control problem formulated either in discrete time
\begin{align} \begin{split}  \label{eq:LQR_discrete}
		\min_{u \in \mathcal{U}_d} \; & J(x_0, u) = \sum_{k=0}^\infty \ell(x(k), u(k)), \quad   
		\text{s.t.} \; x(k+1) = f(x(k),u(k)), \;  x(0) = x_0, 
\end{split} \end{align}
or in continuous time 
\begin{align} \begin{split}  \label{eq:LQR_continuous}
		\min_{u \in \mathcal{U}_c} \; & J(x_0, u) = \int_0^{\infty} \ell(x(t), u(t)) \,dt\, \quad   
		\text{s.t.} \; \dot{x}(t) = f(x(t),u(t)), \;  x(0) = x_0, 
\end{split} \end{align}
where $\ell(x,u) = x^\top Q x + u^\top R u$, $f(x,u) = Ax + Bu$, and $x_0 \in \Rn$ is an initial value. In both cases, we are concerned with computing the optimal value function 
\begin{equation*}
\begin{aligned}
    V\colon &\mathbb{R}^n \to \mathbb{R}, \\
    &x_0 \mapsto \inf_{u \in \mathcal{U}} J(x_0, u),
\end{aligned}
\end{equation*}
where $\mathcal{U}=\mathcal{U}_d$ in the discrete-time case and $\mathcal{U}=\mathcal{U}_c$ in the continuous-time case.
In the LQR framework, it is well-known that the optimal value function $V$ is quadratic, taking the form $V(x) = x^\top P x$, where $P$ satisfies the Discrete Algebraic Riccati Equation for discrete-time systems, and the Continuous Algebraic Riccati Equation for continuous-time systems. As described in Setting~\ref{setting:general}, the graph $\mathcal{G}$ associated with such a problem is determined by the subsystems and their mutual influences. For the set of vertices $\{1, \dots, s\}$, the graph $\mathcal{G}$ has an edge $(j,i)$ if 
\begin{align}
    \begin{split} \label{eq.conditionEdge}
            & A[i,j] \neq 0 \quad \text{or} \quad \Big(m_j > 0 \text{ and } B[i,j] \neq 0  \Big) \quad \text{or} \quad Q[i,j] \neq 0 \quad \text{or} \\ 
    & \Big(m_i > 0, m_j > 0 \text{ and } R[i,j] \neq 0  \Big), 
    \end{split}
\end{align} 
where $A[i,j] \in \mathbb{R}^{n_i \times n_j}$ denotes the submatrix of $A$ corresponding to the rows of $z_i$ and the columns of $z_j$, $B[i,j] \in \mathbb{R}^{n_i \times m_j}$ denotes the submatrix of $B$ corresponding to the rows of $z_i$ and the columns of $u_j$, and analogously for $Q[i,j]$ and $R[i,j]$. In the case of interactions via control inputs, the conditions \(m_j > 0\) and \(m_i > 0, m_j > 0\) are imposed in \eqref{eq.conditionEdge} prior to the conditions on \(B\) and \(R\), respectively, to accommodate subsystems that do not possess control inputs. Such subsystems can only influence other subsystems through the matrices $A$ or $Q$. 

Note that \eqref{eq.conditionEdge} establishes a connection from  $z_j$ to $z_i$ if either $z_j$ or $u_j$ influences the dynamics of $z_i$, if there is an interaction in the state cost between $z_j$ and $z_i$, represented by $Q$, or if there is an interaction in the control cost between $u_j$ and $u_i$, captured by $R$. While the influence in the costs given by $Q$ and $R$ is always symmetric, leading to bidirectional connections in the graph, this symmetry does not generally hold for the dynamics described by $A$ and $B$. Consequently, $\mathcal{G}$ may still be represented as a directed graph. Yet, the results from \cite{shin2022lqr}, on which we rely in Subsection \ref{sec:dislqr}, use an undirected graph and the results in Subsection \ref{sec:contlqr} require symmetry of $A$. Thus, in the remainder of this section we only consider the case that $\mathcal{G}$ is undirected, which is easily accomplished by adding all edges $(j,i)$ to $\mathcal{G}$ for which $(i,j)$ is contained in $\mathcal{G}$.

As highlighted in Corollary \ref{cor:decay_P}, the decaying sensitivity of the optimal value function is associated with the SED property of the Riccati solution. In the first subsection, we address the discrete-time setting, where we revisit results previously derived in \cite{sperl2023separable}. In the second subsection, we transition to the continuous-time setting and, under appropriate assumptions on the problem data, demonstrate an off-diagonal decay property of the Riccati solution. We introduce some notions that will be used in the following.
\begin{definition}
\label{def:stab}
The pair $(A,B)$ is called stabilizable if there exists
a feedback matrix $K \in \mathbb{R}^{m \times n}$ such that $A -B K$ is stable.
\end{definition}
\begin{definition}
\label{def:detect}
    The pair $(A,C)$ is called detectable if the pair $(A^\top, C^\top)$ is stabilizable.
\end{definition}

\subsection{Discrete-Time Linear Quadratic Regulator}\label{sec:dislqr}

In this section, we examine the decaying sensitivity property in discrete-time linear quadratic systems. To this purpose, we revisit the results on exponential decay in the feedback matrix established in \cite{shin2022lqr} and the decay in the Riccati solution as presented in \cite{sperl2023separable}.
Let us introduce the notion of stability in the discrete setting.

\begin{definition}[Stability in discrete time]
    A matrix $C \in \mathbb{R}^{n \times n}$ is called stable in discrete time (or Schur), if all its eigenvalues lie in the unit circle, $i.e.$ $|\lambda_i| < 1$ $\forall$ $\lambda_i \in \sigma(C)$, where $\sigma(C)$ is the spectrum of the matrix $C$.
\end{definition}
Note that for each $\mu>\max_{\lambda_i\in\sigma(C)}|\lambda_i|$ there is $\kappa>0$ such that
\begin{equation} \|C^k\| \le \kappa \mu^k\label{eq:expdecay}\end{equation}
holds all $k\in\mathbb{N}$.
We therefore consider a family of infinite-horizon discrete optimal control problems expressed in the form \eqref{eq:LQR_discrete}, for which the following conditions hold uniformly, i.e., with the same constants $L,\eta, \kappa >0$ and $\mu\in(0,1)$ for all members of the family:
\begin{enumerate}[label=\roman*)]
\item  $\Vert A \Vert,\Vert B \Vert,\Vert Q \Vert,\Vert R \Vert \le L$,
    \item $(A,B)$ is uniformly stabilizable, i.e., Definition \ref{def:stab} holds with $\Vert K \Vert \le L$ and $C=A-BK$ satisfies \eqref{eq:expdecay}, 
    \item $  Q\succeq 0_n$ and $(A, Q^{\frac{1}{2}})$ is uniformly detectable, i.e., Definition \ref{def:detect} holds with $\Vert K \Vert \le L$ and $C=(A-KQ^{\frac{1}{2}})^T$ satisfies \eqref{eq:expdecay}, 
    \item  $R\succeq \eta I_m$.
\end{enumerate}
The solution of the discrete-time infinite horizon optimal control problem relies on the solution of the Discrete Algebraic Riccati Equation (DARE):
\begin{equation}
P=A^{T}PA-A^{T}PB\left(R+B^{T}PB\right)^{-1}B^{T}PA+Q.
\label{eq_dare}
\end{equation}
It has been proved in \cite{shin2022lqr} that the feedback matrix 
\begin{equation*}
    K = (R+B^\top P B)^{-1} B^T P A
\end{equation*}
exhibits a spatially exponential decaying (SED) property, 
$i.e.$
$$
\|K[i,j]\| \le C_K \rho_K^{\distG(i,j)},
$$
where the constants $C_K$ and $\rho_K$ are defined in Theorem 3.3 of \cite{shin2022lqr}. Starting from this result, in \cite{sperl2023separable} we 
extended the SED property to the matrix $P$, the solution of the DARE, as stated in the following result.

\begin{prop} \label{prop.DecayP}
    Consider a family of  LQR problems of the form \eqref{eq:LQR_discrete} satisfying conditions i)--iv) from above, and assume that for $i,j \in \{1,\ldots,s\}$ we have $dist_{\mathcal G}(i,j) = | i - j |$. Then, the solution $P$ of the DARE \eqref{eq_dare} satisfies
    $$
    \Vert P[i,j] \Vert \le  C_P \rho_P^{|i-j|}
    $$
     for constants $C_P$ and $\rho_P$ independent of $s$.  
\end{prop}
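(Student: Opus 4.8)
The plan is to express the DARE solution $P$ in terms of the feedback matrix $K$ and the stable closed-loop matrix $A_K := A - BK$, then propagate the SED property of $K$ (available from Theorem 3.3 of \cite{shin2022lqr}) through this representation. Recall that for the optimal feedback $K = (R + B^\top P B)^{-1} B^\top P A$, the solution of the DARE admits the Lyapunov-type representation
\begin{equation*}
    P = \sum_{k=0}^\infty (A_K^\top)^k \left( Q + K^\top R K \right) A_K^k,
\end{equation*}
which follows from the identity $P = A_K^\top P A_K + Q + K^\top R K$ (a standard algebraic rearrangement of \eqref{eq_dare}) together with stability of $A_K$. The first step is therefore to verify this representation and to collect the uniform bounds: $\|K\| \le C_K$ (from the SED bound summed over a row, using that the graph is a path so row sums of $\rho_K^{|i-j|}$ are bounded by $2/(1-\rho_K)$), hence $\|A_K\| \le L + L C_K =: \Lambda$ uniformly, and $\|A_K^k\| \le \kappa_K \mu_K^k$ uniformly for some $\mu_K \in (0,1)$ — the last point needs care, since a uniform operator-norm bound on $\|A_K^k\|$ does not come for free from the SED property alone, but it is exactly what \cite{shin2022lqr} establishes (the closed loop is uniformly exponentially stable), so I would cite that.

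The second step is the block-decay bookkeeping. Since $\mathrm{dist}_{\mathcal G}(i,j) = |i-j|$, I need to show each summand $(A_K^\top)^k (Q + K^\top R K) A_K^k$ is itself SED with a base that may be worse than $\rho_K$ but still strictly less than $1$, and with a constant that decays geometrically in $k$ so the series converges blockwise. The key combinatorial fact is that products of SED matrices are SED: if $M$ is $(C_M,\rho_M)$-SED and $N$ is $(C_N,\rho_N)$-SED on a graph with the path structure, then $MN$ is $(C',\rho')$-SED with $\rho' = \max\{\rho_M,\rho_N\}$ (or any value in between, absorbing the polynomial-in-path-length factor coming from $\sum_\ell \rho^{|i-\ell|}\rho^{|\ell-j|}$ into a slightly larger base) and $C'$ controlled by $C_M C_N$ times a geometric factor; one also needs that $Q$ and $R$ are SED, which holds because $\|Q\|,\|R\| \le L$ and by \eqref{eq.conditionEdge} their blocks vanish when $|i-j| > 1$, so they are trivially $(L,\rho)$-SED for any $\rho$. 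Iterating this $k$ times gives that $(A_K^\top)^k (Q + K^\top R K) A_K^k$ is $(\widehat C \,\theta^k, \rho_*)$-SED for suitable $\widehat C > 0$, $\theta \in (0,1)$ and $\rho_* \in (\rho_K,1)$; summing over $k$ yields $\|P[i,j]\| \le C_P \rho_*^{|i-j|}$ with $C_P = \widehat C/(1-\theta)$, all constants independent of $s$.

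The main obstacle is controlling the growth of the SED constant under repeated matrix products: each multiplication replaces the constant $C$ by something like $C^2 \cdot \frac{\text{(const)}}{\rho_* - \rho_K}$, and one must ensure that the accumulated constant over $k$ products is still dominated by a geometrically decaying factor $\theta^k$ coming from $\|A_K^k\|^2 \le \kappa_K^2 \mu_K^{2k}$ — i.e. that the uniform exponential decay of the closed loop beats the blow-up of the SED constant. This forces a careful choice of the intermediate base $\rho_*$ strictly between $\rho_K$ and $1$ and a quantitative version of the "product of SED matrices is SED" lemma with explicit constants; the path-graph structure keeps the row/column sums $\sum_\ell \rho^{|i-\ell| + |\ell - j|}$ uniformly bounded, which is what makes the whole estimate $s$-independent. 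Everything else is routine once these constants are tracked; the detailed proof is given in \cite{sperl2023separable}.
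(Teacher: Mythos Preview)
The paper does not supply its own proof of this proposition; it simply states the result and attributes it to \cite{sperl2023separable}. Your proposal is therefore not in tension with anything in the paper---and indeed you close with the same citation. Your outline accurately reconstructs the expected argument: the closed-loop Lyapunov representation $P=\sum_{k\ge 0}(A_K^\top)^k(Q+K^\top RK)A_K^k$, the SED property of $K$ from \cite{shin2022lqr}, the observation that $A,B,Q,R$ are banded (hence trivially SED) thanks to \eqref{eq.conditionEdge} and the path-graph assumption $\distG(i,j)=|i-j|$, and the preservation of SED under products on a path graph with the row-sum $\sum_\ell \rho^{|i-\ell|+|\ell-j|}$ controlled uniformly in $s$.

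You also correctly isolate the genuine technical point: a naive iteration of ``SED $\times$ SED is SED'' inflates the constant by a factor like $\tfrac{1+\rho}{1-\rho}$ at each step, so after $2k{+}1$ factors the constant may grow geometrically in $k$, and one must show this is dominated by the uniform closed-loop decay $\|A_K^k\|\le\kappa_K\mu_K^k$. Your proposed remedy---working at an intermediate base $\rho_*\in(\rho_K,1)$ and balancing the blockwise SED estimate against the global operator-norm bound---is the right mechanism; a clean way to execute it is to split the series at $k\sim c\,|i-j|$, using the SED bound for small $k$ (where the accumulated constant $\Gamma^{2k}$ is absorbed by $\rho_K^{|i-j|}$ for $c$ small enough) and the crude bound $\|(A_K^\top)^k(Q+K^\top RK)A_K^k\|\le\kappa_K^2\mu_K^{2k}\|Q+K^\top RK\|$ for large $k$ (whose tail already yields $\mu_K^{2c|i-j|}$). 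Since both you and the paper defer the bookkeeping to \cite{sperl2023separable}, there is nothing further to compare.
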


Note that Proposition \ref{prop.DecayP} is stated for the case \(\distG(i,j) = |i-j|\), but its proof extends naturally to any graph where \(| \{ j \mid \distG(i,j) \leq 1 \} |\) remains bounded independently of the dimension for all \( i \in \agents \). For example, if \( B = Q = R = I_n \) and \( A \) is a \( p \)-banded matrix for some \( p \in \mathbb{N} \), it follows that \(| \{ j \mid \distG(i,j) \leq 1 \} | \leq 2p + 1 \) for any dimension and any node \( i \).

\subsection{Continuous-Time Linear Quadratic Regulator}\label{sec:contlqr}

In this section, we show that a similar property also holds for continuous-time LQR problems of the form \eqref{eq:LQR_continuous}. 
Since this is not the main focus of this paper, we do not reproduce the reasoning from \cite{sperl2023separable,shin2022lqr} but rather present a different proof, which is significantly shorter and of interest on its own, but which also requires stricter assumptions.

As previously mentioned, the optimal value function takes the form $ V(x)= x^\top P x$,
where $P \in \R^{n \times n}$ satisfies the Algebraic Riccati Equation (ARE)
\begin{equation}
    A^\top P + P A - P B R^{-1} B^\top P + Q = 0.
    \label{ARE}
\end{equation}
To ensure the existence and uniqueness of a positive semi-definite solution to the ARE, we assume that the system is stabilizable and detectable according to Definition \ref{def:stab} and Definition \ref{def:detect}, respectively, where the notion of stability in the continuous setting is provided in the following definition.
\begin{definition}[Stability in continuous time]
    A matrix $C \in \mathbb{R}^{n \times n}$ is called stable in continuous time (or Hurwitz), if all its eigenvalues lie in the
open left half complex plane, i.e.\ $Re(\lambda_i) <0$ $\forall$ $\lambda_i \in \sigma(C)$, where $Re(\cdot)$ stands for the real part.
\end{definition}

\begin{theorem}
Given a stabilizable pair $(A,B)$ and a detectable pair $(A,Q^{1/2})$, the ARE equation \eqref{ARE} admits a unique stabilizing symmetric positive semidefinite solution $P$.
\label{ex_uni_ARE}
\end{theorem}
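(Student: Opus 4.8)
The plan is to use the Hamiltonian-matrix and invariant-subspace approach, which delivers existence, the stabilizing property, positive semidefiniteness, and uniqueness in one stroke. Introduce
\[
  \mathcal{H} := \begin{bmatrix} A & -BR^{-1}B^\top \\ -Q & -A^\top \end{bmatrix} \in \R^{2n\times 2n}, \qquad \mathcal{J} := \begin{bmatrix} 0_n & I_n \\ -I_n & 0_n \end{bmatrix},
\]
and note that $\mathcal{H}^\top \mathcal{J} + \mathcal{J}\mathcal{H} = 0$, so the spectrum of $\mathcal{H}$ is symmetric about the imaginary axis. The first step is to show that, under stabilizability of $(A,B)$ and detectability of $(A,Q^{1/2})$, $\mathcal{H}$ has no eigenvalue on the imaginary axis: if $\mathcal{H}(v^\top,w^\top)^\top = i\omega\,(v^\top,w^\top)^\top$, multiplying the two block equations by $w^*$ and $v^*$ and combining them gives $v^* Q v + w^* B R^{-1} B^\top w = 0$, whence $Q^{1/2}v = 0$ and $B^\top w = 0$; feeding this back shows that $v$ is an eigenvector and $w$ a left eigenvector of $A$ for the purely imaginary eigenvalue $i\omega$, annihilated by $Q^{1/2}$ and $B^\top$ respectively, which contradicts detectability and stabilizability via the Hautus (PBH) test unless $v = w = 0$. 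Consequently $\mathcal{H}$ has exactly $n$ eigenvalues in the open left half-plane and a unique $n$-dimensional invariant subspace $\mathcal{X}_-$ associated with them.

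Write $\mathcal{X}_- = \operatorname{Im} W$ with $W = \begin{bmatrix} X_1 \\ X_2 \end{bmatrix}$, $X_1, X_2 \in \R^{n\times n}$, $\operatorname{rank} W = n$, and let $T \in \R^{n\times n}$ be the Hurwitz matrix with $\mathcal{H}W = WT$. From $\mathcal{H}^\top\mathcal{J} + \mathcal{J}\mathcal{H} = 0$ and $\mathcal{H}W = WT$ one obtains $T^* M + M T = 0$ for $M := W^* \mathcal{J} W = X_1^* X_2 - X_2^* X_1$; since $T$ is Hurwitz this forces $M = 0$, i.e. $X_1^* X_2$ is Hermitian. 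Next I would show $X_1$ is invertible: if $X_1 v = 0$, the top block row of $\mathcal{H}W = WT$ together with the symmetry just obtained yields $B^\top X_2 v = 0$ and $X_1 T v = 0$, so $\ker X_1$ is $T$-invariant; choosing $v \neq 0$ with $T v = \lambda v$, $\operatorname{Re}\lambda < 0$, the bottom block row gives $A^\top X_2 v = -\lambda X_2 v$, so $X_2 v$ is a left eigenvector of $A$ for an eigenvalue in the open right half-plane annihilated by $B^\top$, contradicting stabilizability unless $X_2 v = 0$, which contradicts $\operatorname{rank} W = n$. Hence $X_1$ is invertible.

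Set $P := X_2 X_1^{-1}$. Then $\mathcal{H}\begin{bmatrix} I_n \\ P \end{bmatrix} = \begin{bmatrix} I_n \\ P \end{bmatrix} A_P$ with $A_P := X_1 T X_1^{-1}$; reading off the two block rows gives $A - BR^{-1}B^\top P = A_P$ and $-Q - A^\top P = P A_P$, and eliminating $A_P$ yields exactly the ARE \eqref{ARE}. Since $A_P$ is similar to $T$, it is Hurwitz, so $P$ is stabilizing. Symmetry of $P$ follows from $X_1^\top X_2$ being symmetric via $P = X_1^{-\top}(X_1^\top X_2) X_1^{-1}$. For positive semidefiniteness, rewrite \eqref{ARE} as the Lyapunov equation $A_P^\top P + P A_P + \bigl(Q + P B R^{-1} B^\top P\bigr) = 0$; since $A_P$ is Hurwitz and $Q + P B R^{-1} B^\top P \succeq 0$, its unique solution is $P = \int_0^\infty e^{A_P^\top t}\bigl(Q + P B R^{-1} B^\top P\bigr) e^{A_P t}\, dt \succeq 0$. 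Finally, any symmetric stabilizing solution $\widetilde P$ makes $\operatorname{Im}\begin{bmatrix} I_n \\ \widetilde P\end{bmatrix}$ an $n$-dimensional $\mathcal{H}$-invariant subspace on which $\mathcal{H}$ acts as the Hurwitz matrix $A - BR^{-1}B^\top \widetilde P$; by uniqueness of the stable invariant subspace $\mathcal{X}_-$ the two subspaces coincide, forcing $\widetilde P = P$.

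The main obstacle is the first step — proving that $\mathcal{H}$ has no purely imaginary eigenvalues — since this is exactly where both hypotheses are indispensable, each entering through one half of the Hautus test; the invertibility of $X_1$ is a second, closely related point that again rests on stabilizability, and everything afterwards is linear-algebraic bookkeeping. An alternative, control-theoretic route is available: stabilizability makes $V(x_0) = \inf_{u} J(x_0,u)$ finite, a completion-of-squares / dynamic-programming argument shows $V(x) = x^\top P x$ with $P \succeq 0$ solving \eqref{ARE}, detectability forces the optimal closed loop $A_P$ to be Hurwitz, and uniqueness follows from a monotonicity comparison of any two stabilizing solutions; but the Hamiltonian argument is the most self-contained.
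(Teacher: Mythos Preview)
Your Hamiltonian/invariant-subspace argument is correct and is one of the standard proofs of this classical result. Note, however, that the paper does not prove Theorem~\ref{ex_uni_ARE} at all: immediately after the statement it simply writes ``For a detailed discussion of this result, we refer to \cite{lancaster1995algebraic},'' so there is no in-paper proof to compare against. Your sketch is essentially the approach taken in that reference (Lancaster--Rodman), and the control-theoretic alternative you outline at the end is the other standard route; either would be an acceptable expansion of what the paper leaves as a citation.
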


For a detailed discussion of this result, we refer to \cite{lancaster1995algebraic}. Throughout this section, the assumptions of Theorem \ref{ex_uni_ARE} will be maintained to ensure the well-posedness of the problem. We now turn to a specific case where the solution to the ARE can be found in closed form.
\begin{prop} \label{prop_sol_ric}
Given a symmetric matrix $A$, and matrices $B =  I_n$, $Q = c I_n$ and $R= \gamma I_n$, with $c,\gamma>0$, the unique positive semi-definite solution of the ARE \eqref{ARE} is given by:
\begin{equation}
P = \gamma \left(\sqrt{A^2+\frac{c}{\gamma} I_n} + A\right).
\label{sol_ric}
\end{equation}

\end{prop}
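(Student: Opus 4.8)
The plan is to verify directly that the matrix $P$ in \eqref{sol_ric} solves the ARE \eqref{ARE}, that it is symmetric and positive semidefinite, and that it is moreover the \emph{stabilizing} solution, so that the uniqueness claim follows from Theorem \ref{ex_uni_ARE}. Since $A$ is symmetric and $B = I_n$, $Q = cI_n$, $R = \gamma I_n$, the ARE \eqref{ARE} reduces to
\[
    AP + PA - \tfrac1\gamma P^2 + cI_n = 0 .
\]
Write $S \coloneqq \sqrt{A^2 + \tfrac{c}{\gamma} I_n}$ for the unique positive definite square root of the positive definite matrix $A^2 + \tfrac{c}{\gamma} I_n$, so that $P = \gamma(S + A)$. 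The structural fact I will exploit is that $S$ commutes with $A$: indeed $A$ commutes with $A^2 + \tfrac{c}{\gamma} I_n$ and hence with every matrix function of it, in particular with $S$; equivalently, $A$ and $S$ are simultaneously orthogonally diagonalizable, with $S$ acting as multiplication by $\sqrt{\lambda^2 + c/\gamma}$ on the $\lambda$-eigenspace of $A$.

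Next I would substitute $P = \gamma(S+A)$ into the reduced ARE. Using $AS = SA$ one obtains $AP + PA = 2\gamma(A^2 + AS)$ and $\tfrac1\gamma P^2 = \gamma(S+A)^2 = \gamma(S^2 + 2AS + A^2)$; since $S^2 = A^2 + \tfrac{c}{\gamma} I_n$ this equals $2\gamma(A^2 + AS) + cI_n$. Adding the three contributions gives $AP + PA - \tfrac1\gamma P^2 + cI_n = 0$, so $P$ solves the ARE. Symmetry of $P$ is immediate. For definiteness, note that on each $\lambda$-eigenspace of $A$ the matrix $P$ acts as multiplication by $\gamma\bigl(\sqrt{\lambda^2 + c/\gamma} + \lambda\bigr)$, which is strictly positive because $\sqrt{\lambda^2 + c/\gamma} > |\lambda| \ge -\lambda$; hence $P \succ 0$, in particular $P \succeq 0$.

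It remains to identify $P$ as the unique positive semidefinite solution. The pair $(A, B) = (A, I_n)$ is trivially stabilizable and $(A, Q^{1/2}) = (A, \sqrt{c}\, I_n)$ is trivially detectable, so Theorem \ref{ex_uni_ARE} applies and yields a unique stabilizing symmetric positive semidefinite solution. Our candidate is stabilizing: the closed-loop matrix equals $A - BR^{-1}B^\top P = A - \tfrac1\gamma P = A - (S + A) = -S$, which is Hurwitz since $S \succ 0$. Hence $P$ coincides with that unique stabilizing solution, and invoking the classical fact that under detectability every positive semidefinite solution of the ARE is stabilizing (see \cite{lancaster1995algebraic}), $P$ is the unique positive semidefinite solution.

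I expect the only genuinely delicate point to be justifying the commutativity $AS = SA$ and the associated simultaneous-diagonalization picture; once this is settled, the verification of the ARE, the positive definiteness, and the stabilizing property are all short computations, and uniqueness is a direct appeal to Theorem \ref{ex_uni_ARE} together with the standard detectability argument.
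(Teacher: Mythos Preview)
Your proof is correct and follows essentially the same approach as the paper: verify the formula by direct substitution into the ARE (using that $A$ and $S$ commute) and establish positive definiteness via the eigenvalue inequality $\sqrt{\lambda^2 + c/\gamma} > |\lambda|$. You are in fact more thorough than the paper on the uniqueness claim, explicitly checking stabilizability/detectability, computing the closed-loop matrix $-S$, and invoking Theorem~\ref{ex_uni_ARE} together with the standard fact that under detectability every positive semidefinite ARE solution is stabilizing; the paper's proof simply asserts uniqueness after showing $P\succ 0$ and that $P$ satisfies the equation.
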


\begin{proof}

First, observe that $A^2+\frac{c}{\gamma} I_n \succ 0$, ensuring that the matrix square root in the expression for $P$ is well defined. Furthermore, since $\sqrt{\lambda^2 +\frac{c}{\gamma}}> |\lambda|$ and $\gamma>0$, the matrix in \eqref{sol_ric} is positive definite.
Substituting expression \eqref{sol_ric} into the ARE \eqref{ARE} confirms that the expression for $P$ satisfies the equation.
\end{proof}

We now introduce the concept of functions of diagonalizable matrices, which will be instrumental in the subsequent analysis. For further details and alternative definitions, the reader is referred to \cite{higham2008functions}.

\begin{definition}[Function of a diagonalizable matrix]
Let $A \in  \mathbb{R}^{n \times n} $ be a diagonalizable matrix. Then there exists an invertible matrix \( X \in \mathbb{C}^{n \times n} \) and a diagonal matrix  
$
\Lambda = \operatorname{diag}(\lambda_1, \lambda_2, \dots, \lambda_n) \in \mathbb{C}^{n \times n},
$
containing the eigenvalues of \( A \), such that  
$ 
A = X \Lambda X^{-1}.
$
Let \( f(z) \) be a scalar-valued function (real or complex) that is defined at each eigenvalue \( \lambda_i \) of \( A \). The function of the matrix \( A \), denoted \( f(A) \), is then defined by  
\[
f(A) = X f(\Lambda) X^{-1},
\]
where  
\[
f(\Lambda) = \operatorname{diag}(f(\lambda_1), f(\lambda_2), \dots, f(\lambda_n)).
\]
\end{definition}

Then, the expression \eqref{sol_ric} can be reformulated  as $P=f(A)$, where
\begin{equation}\label{eq:riccati-sol-f}
f(x) = \gamma \left(\sqrt{x^2+\frac{c}{\gamma} } + x\right), \quad x \in \mathbb{C}.
\end{equation}
We note that if $A$ is a banded matrix with spectrum in $[a,b]$, the scaled and shifted matrix
$$
\widetilde{A} =  \psi(A) := \frac{2}{b-a} A - \frac{a+b}{b-a} I_n
$$
remains banded with the same bandwidth, and its spectrum is contained in $[-1,1]$. Moreover,  $f(A)= f(\psi^{-1}(\widetilde A))$. Accordingly, we focus on bounding the off-diagonal entries of the matrix function $F(\widetilde A)$, where for $x \in \mathbb{C}$
\begin{equation}
    F(x) := f(\psi^{-1}(x))= \gamma \left(\sqrt{\left( \frac{b-a}{2}x +\frac{a+b}{2}\right)^2+\frac{c}{\gamma} } + \frac{b-a}{2}x +\frac{a+b}{2}\right), 
    \label{f_psi}
\end{equation}
and $\widetilde A$ is banded with spectrum in $[-1,1]$.
The following useful result is adapted from \cite[Theorem 2.2]{benzi1999bounds}.

\begin{theorem}
\label{thm_benzi}
Let $g: \mathbb{C} \rightarrow \mathbb{C}$ be a function analytical in the interior of an ellipse $\mathcal{E}_\chi$ with foci at $\pm 1$ and sum of the half axes $\chi>1$, and continuous on $\mathcal{E}_\chi$. Additionally, let $A\in\mathbb C^{n\times n}$ be symmetric, $p$-banded, and such that $[-1,1]$ contains its spectrum, and let $\rho := \chi^{-\frac{1}{p}}\in(0, 1)$. Then, for all pairs of indices $(i,j)$  such that $i \neq j$, we have that 
$$
|g(A)[i,j]| \le K \rho ^{|i-j|},
$$
where
$$
K = \frac{2 \chi M(\chi)}{\chi-1}, \quad M(\chi) = \max_{z \in \mathcal{E}_\chi} |g(z)|.
$$
\end{theorem}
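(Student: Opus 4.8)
The plan is to combine a classical polynomial approximation estimate on $[-1,1]$ with the elementary fact that powers of a $p$-banded matrix remain banded. First I would record the approximation-theoretic input: since $g$ is analytic in the interior of $\mathcal{E}_\chi$ and continuous on $\mathcal{E}_\chi$, its Chebyshev expansion $g = \sum_{m\ge 0} a_m T_m$ on $[-1,1]$ has coefficients obeying $|a_m| \le 2 M(\chi)\chi^{-m}$; this follows by writing $a_m$ as a contour integral over $\mathcal{E}_\chi$ after the Joukowski substitution $z = \tfrac12(w + w^{-1})$ and bounding the integrand by $M(\chi)$ on $|w| = \chi$. Consequently the truncation $p_k := \sum_{m=0}^k a_m T_m$, a polynomial of degree at most $k$, satisfies
\begin{equation*}
    \|g - p_k\|_{\infty,[-1,1]} \le \sum_{m=k+1}^\infty |a_m| \le 2 M(\chi) \sum_{m=k+1}^\infty \chi^{-m} = \frac{2 M(\chi)}{\chi - 1}\,\chi^{-k}.
\end{equation*}
This is exactly the content of \cite[Theorem 2.2]{benzi1999bounds}, which I would simply quote.

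Next I would exploit the banded structure. Since $A$ is $p$-banded, an induction on $m$ shows $A^m$ is $mp$-banded, hence $p_k(A)$ is $kp$-banded, i.e.\ $p_k(A)[i,j] = 0$ whenever $|i-j| > kp$. Fix a pair $(i,j)$ with $i \neq j$ and let $k$ be the largest nonnegative integer with $kp < |i-j|$; then $p_k(A)[i,j] = 0$ and $k \ge |i-j|/p - 1$. Using that $A$ is symmetric with $\sigma(A) \subseteq [-1,1]$, the matrix $(g-p_k)(A)$ is symmetric with spectral norm $\max_{\lambda \in \sigma(A)}|g(\lambda) - p_k(\lambda)| \le \|g - p_k\|_{\infty,[-1,1]}$, so
\begin{equation*}
    |g(A)[i,j]| = \bigl| (g(A) - p_k(A))[i,j] \bigr| \le \|g(A) - p_k(A)\|_2 \le \|g - p_k\|_{\infty,[-1,1]} \le \frac{2 M(\chi)}{\chi - 1}\,\chi^{-k}.
\end{equation*}
Finally, from $k \ge |i-j|/p - 1$ we get $\chi^{-k} \le \chi^{1 - |i-j|/p} = \chi\,(\chi^{-1/p})^{|i-j|} = \chi \rho^{|i-j|}$, whence $|g(A)[i,j]| \le \tfrac{2\chi M(\chi)}{\chi - 1}\rho^{|i-j|} = K\rho^{|i-j|}$, which is the claim.

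The only genuinely nontrivial ingredient is the geometric decay $|a_m| \le 2M(\chi)\chi^{-m}$ of the Chebyshev coefficients together with the resulting geometric-tail bound; everything else — bandedness of matrix powers, the spectral-norm identity for a function of a symmetric matrix, and the index bookkeeping in the choice of $k$ — is routine. Since that decay estimate is precisely \cite[Theorem 2.2]{benzi1999bounds}, the \emph{adaptation} here amounts only to tracking constants and specializing from a general analytic $g$ and general bandwidth to the symmetric, banded setting, so I would present the argument at roughly the level of detail above rather than reproving the contour-integral estimate.
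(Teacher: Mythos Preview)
Your argument is correct and is precisely the standard proof underlying \cite[Theorem~2.2]{benzi1999bounds}: Bernstein-type geometric decay of the Chebyshev coefficients gives the polynomial approximation rate on $[-1,1]$, bandedness of $A^m$ kills the $(i,j)$-entry of the truncated approximant, and the spectral calculus for Hermitian matrices converts the sup-norm bound into an entrywise bound. The paper itself does not prove this theorem at all; it simply records it as an adaptation of \cite[Theorem~2.2]{benzi1999bounds} and uses it as a black box for Proposition~\ref{prop_decay_CARE}, so your write-up is in fact more detailed than what the paper provides, and fully consistent with the cited source.
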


We now formally state the result on the off-diagonal decay of the solution of the CARE equation \eqref{ARE}, utilizing the previously mentioned theorem and the closed-form solution \eqref{eq:riccati-sol-f}.

\begin{prop} \label{prop.ExistenceDecayCont}
    Let $A \in \mathbb{R}^{n \times n}$ be a $p$-banded symmetric matrix with spectrum in $[a, b]$, and let $f$ be the function in \eqref{eq:riccati-sol-f} that solves the Riccati equation \eqref{ARE} with $B=I_n$, $Q=cI_n$, and $R=\gamma I_n$. Then for all pairs of indices $(i,j)$  such that $i \neq j$, we have that
\begin{equation}
|P[i,j]| = |f(A)[i,j]| \le K \rho^{|i-j|}, \quad i \neq j,
\label{esti_square_root}
\end{equation}
with 
$$
\rho = \chi^{-\frac{1}{p}}, \qquad  \chi = \sqrt{\theta} + \sqrt{\theta+1}, \qquad \theta=\frac{2}{\gamma (b-a)^2}\left(\sqrt{(a^2 \gamma+c)(b^2 \gamma +c)}+ab \gamma +c\right),$$
$$K  = \frac{2 \chi M_F(\chi)}{\chi-1} , \qquad
M_F(\chi) := \max_{z \in \mathcal{E}_{\chi}} |F(z)| = F\left(\sqrt{\theta+1}\right),\qquad F(x)= f(\psi^{-1}(x)),
$$
and $\mathcal{E}_\chi$ is the ellipse with foci at $\pm 1$ and sum of the half axes $\chi>1$.
\label{prop_decay_CARE}
\end{prop}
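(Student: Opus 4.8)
The plan is to invoke Theorem~\ref{thm_benzi} with the scalar function $F$ from \eqref{f_psi} and the matrix $\widetilde A=\psi(A)$. First I would record the two facts that legitimize this. Since $\psi$ is affine, $\widetilde A$ is again symmetric and $p$-banded, and because $\psi$ maps $[a,b]$ onto $[-1,1]$ its spectrum lies in $[-1,1]$. Moreover, by the spectral definition of a matrix function, $F(\widetilde A)=f(\psi^{-1}(\psi(A)))=f(A)=P$, the solution of \eqref{ARE} furnished by Proposition~\ref{prop_sol_ric} and \eqref{eq:riccati-sol-f}. Hence the entrywise estimate \eqref{esti_square_root} is exactly the conclusion of Theorem~\ref{thm_benzi} applied to $F$ and $\widetilde A$, \emph{provided} we (a) identify the largest $\chi>1$ for which $F$ is analytic in the interior of $\mathcal E_\chi$ and continuous on $\mathcal E_\chi$, and (b) evaluate $M_F(\chi)=\max_{z\in\mathcal E_\chi}|F(z)|$.

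For (a): the only singularities of $F$ are the branch points of the square root in \eqref{f_psi}, i.e.\ the two complex-conjugate roots of $\bigl(\tfrac{b-a}{2}z+\tfrac{a+b}{2}\bigr)^2+\tfrac c\gamma=0$, namely $z_\pm=\bigl(-(a+b)\pm 2i\sqrt{c/\gamma}\bigr)/(b-a)$, both off the real axis. Choosing the branch cut so that it runs from $z_+$ to $z_-$ through the exterior of the ellipse (in the variable $w=\psi^{-1}(z)=\tfrac{b-a}{2}z+\tfrac{a+b}{2}$ this is the image under $\psi$ of the rays $[i\sqrt{c/\gamma},i\infty)$ and $(-i\infty,-i\sqrt{c/\gamma}]$), $F$ is analytic in the interior of $\mathcal E_\chi$ and continuous on $\mathcal E_\chi$ for all $\chi$ up to the value at which $z_\pm\in\partial\mathcal E_\chi$; since $z_\pm$ are genuine algebraic singularities, no larger $\chi$ is admissible, so this is the optimal choice. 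To compute it I would pass to the variable $w$: then $\partial\mathcal E_\chi$ maps to the ellipse $\mathcal E'$ with foci $\psi^{-1}(-1)=a$ and $\psi^{-1}(1)=b$, and $z_\pm\mapsto\pm i\sqrt{c/\gamma}$. Requiring $\pm i\sqrt{c/\gamma}\in\mathcal E'$ forces the semi-major axis of $\mathcal E'$ to equal $\tfrac12\bigl(\sqrt{a^2+c/\gamma}+\sqrt{b^2+c/\gamma}\bigr)$; rescaling back to foci $\pm1$ by the factor $2/(b-a)$ and using $\alpha^2-\beta^2=1$, $\chi=\alpha+\beta$, $\chi^{-1}=\alpha-\beta$ for the half-axes $\alpha,\beta$ of $\mathcal E_\chi$, a direct simplification of $\beta^2=\alpha^2-1$ yields $\beta^2=\theta$ with $\theta$ exactly as in the statement; hence $\chi=\sqrt{\theta+1}+\sqrt\theta$, $\alpha=\sqrt{\theta+1}$, and the rightmost point of $\mathcal E_\chi$ is $\sqrt{\theta+1}$.

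For (b): since $F$ is analytic and nowhere zero in the interior of $\mathcal E_\chi$ and continuous on its closure, $\log|F|$ is harmonic there and $|F|$ attains its maximum on $\partial\mathcal E_\chi$. Using the closed form $F(z)=\gamma\bigl(\sqrt{w^2+c/\gamma}+w\bigr)=\sqrt{\gamma c}\,\exp\!\bigl(\operatorname{arcsinh}(w\sqrt{\gamma/c})\bigr)$ with $w=\psi^{-1}(z)$, one has $|F|=\sqrt{\gamma c}\,\exp\!\bigl(\operatorname{Re}\operatorname{arcsinh}(w\sqrt{\gamma/c})\bigr)$, whose level sets are confocal ellipses (the branch cut being their degenerate member). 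Since $F(\bar z)=\overline{F(z)}$, any maximizer of $|F|$ on $\partial\mathcal E_\chi$ may be taken on the real axis, i.e.\ among $\pm\sqrt{\theta+1}$; at $z=-\sqrt{\theta+1}$ the argument $w$ is negative and $\sqrt{w^2+c/\gamma}+w$ is small, while the outermost level ellipse of $|F|$ meeting $\partial\mathcal E_\chi$ touches it from inside precisely at the rightmost vertex $z=\sqrt{\theta+1}$, so $M_F(\chi)=F(\sqrt{\theta+1})$. Combining (a), (b) with Theorem~\ref{thm_benzi} (with $\rho=\chi^{-1/p}$ and $K=2\chi M_F(\chi)/(\chi-1)$) and with $P=f(A)=F(\widetilde A)$ then gives \eqref{esti_square_root}.

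The routine-but-delicate point is the algebra in (a), which must collapse exactly to the displayed $\theta$; I verified it does. The genuine obstacle is the localization argument in (b): rigorously showing, with due care for the branch of the square root, that $\max_{\partial\mathcal E_\chi}|F|$ is attained at the rightmost real point $\sqrt{\theta+1}$ rather than off the real axis or at a branch point — this is where the explicit form of $F$ and the confocal structure of its level sets must be exploited.
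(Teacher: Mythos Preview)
Your proposal is correct and follows the same route as the paper: rewrite $P=f(A)=F(\widetilde A)$, locate the two branch points $z_\pm$ of $F$ to determine the critical ellipse $\mathcal E_\chi$, and then invoke Theorem~\ref{thm_benzi}. The paper's proof is much terser---in particular it simply asserts without argument that the maximum of $|F|$ on $\mathcal E_\chi$ is attained at $z=\sqrt{\theta+1}$, which is exactly the point you identify as the genuine obstacle and for which you supply more justification than the paper does.
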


\begin{proof}
We have $f(A)=f(\psi^{-1}(\widetilde A))=F(\widetilde A)$ and the function $F(x)$ (given in \eqref{f_psi}) satisfies the conditions of Theorem~\ref{thm_benzi}. 
In particular, it is analytical in any ellipse with foci in $-1$ and $1$ that do not include, in its interior part, the points
$$
x_{1/2} = -\frac{b+a}{b-a} \pm i \, 2 \frac{\sqrt{c}}{\sqrt{\gamma}(b-a)}.
$$
Thus, we consider the ellipse $\mathcal{E}_\chi$ with foci in $-1$ and $1$ passing through the points $x_{1/2}$. The sum of the half axes of $\mathcal{E}_\chi$ is given by
$$
\chi = \sqrt{\theta} + \sqrt{\theta+1}, \qquad \theta=\frac{2}{\gamma (b-a)^2}\left(\sqrt{(a^2 \gamma+c)(b^2 \gamma +c)}+ab \gamma +c\right).
$$
Applying Theorem \ref{thm_benzi} yields the desired result. It can be verified that the maximum of $|F(z)|$ on the ellipse occurs at $z = \sqrt{\theta+1}$.
\end{proof}

\begin{example}[Heat equation]
We consider the optimal control problem for the heat equation with homogeneous Neumann boundary conditions, given by:
\begin{equation*}
\partial_t y(t,x) = \sigma \partial_{xx} y(t,x)  + u(t,x),
\label{AC}
\end{equation*}
with $x \in [0,1]$ and $t \in (0,+\infty)$. The corresponding cost functional is defined as:
$$
\tilde{J}(y_0, u) = \int_0^{\infty}  \int_0^1 (|y(t,x)|^2 +  \tilde{\gamma} |u(t,x)|^2) dx \, dt \,.
$$
By discretizing the PDE using finite difference schemes with a spatial stepsize $\Delta x = 5\cdot 10^{-3}$, we approximate the system by the following system of ODEs: 
$$
\dot{y}(s)=A y(s) + Bu(s),
$$
where $A$ is a tridiagonal matrix corresponding to the discretized Laplacian with Neumann boundary conditions.
The discretized cost functional reads
$$
J(y_0,u) = \int_0^{\infty}  \Delta x \, y(t)^\top y(t) +  \gamma  \, u(t)^\top u(t) \, dt \,,
$$
with $\gamma = \tilde{\gamma} \Delta x$ and $n=200$, the number of discretized spatial points. We set $\tilde{\gamma} = 0.01$ and consider two different values for the diffusion coefficient $\sigma$, namely $10^{-3}$ and $10^{-2}$. 

In Figure \ref{fig_decay_bound}, we present a comparison between the decay of the absolute value of the first column of the solution to the CARE \eqref{ARE} and the upper bound derived in Proposition \ref{prop_decay_CARE} for diffusion coefficients $\sigma = 10^{-3}$ (left panel) and  $\sigma = 10^{-2}$ (right panel). In both cases, the bound correctly captures the decay behavior, though a shift is observed. This discrepancy may be attributed to the constant $K$ in \eqref{esti_square_root}, which could potentially be refined through sharper estimates. Additionally, it is evident that increasing the diffusion coefficient results in a slower decay.
Indeed, a higher diffusion leads to a faster communication between nodes and, consequently, to a slower decay of sensitivity.

\begin{figure}[ht]
    \centering
    \begin{subfigure}{0.45\textwidth}
        \centering
        \includegraphics[width=\textwidth]{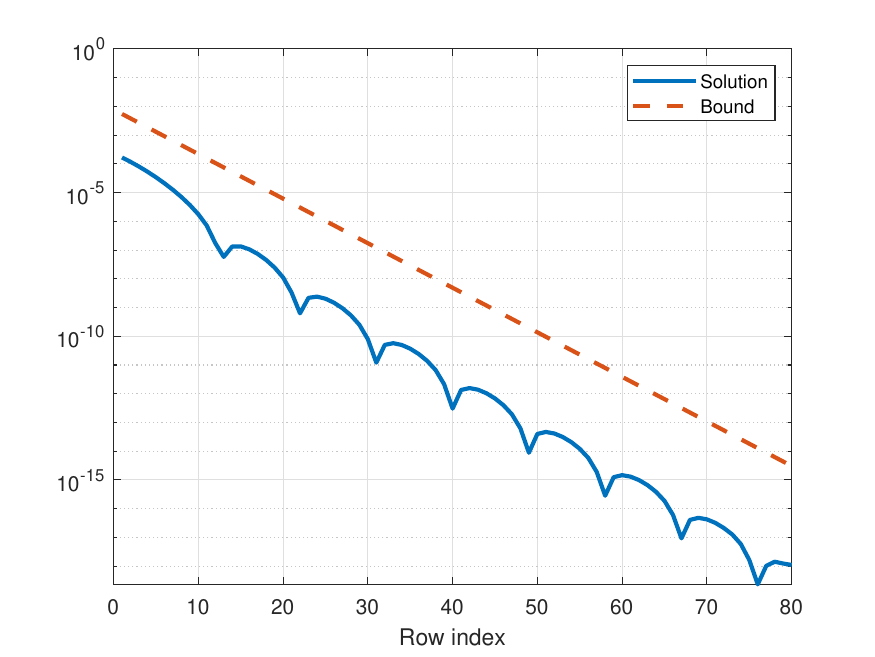}
    \end{subfigure}
    \hfill
    \begin{subfigure}{0.45\textwidth}
        \centering
        \includegraphics[width=\textwidth]{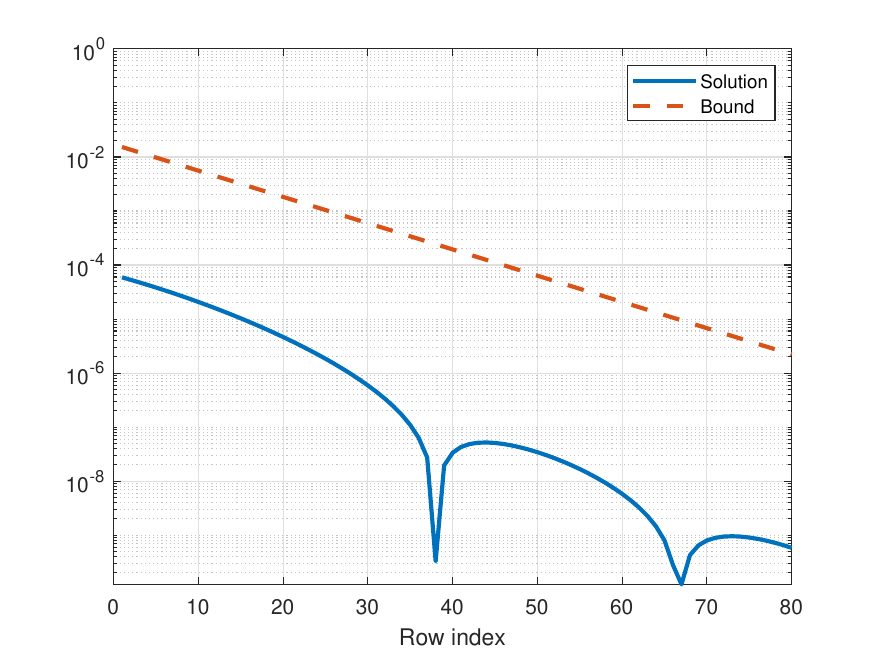}
    \end{subfigure}

	\caption{Decay of the first column's absolute value of the solution of the CARE \eqref{ARE} and the corresponding upper bound \eqref{esti_square_root} with $\sigma = 10^{-3}$ (left) and  $\sigma = 10^{-2}$ (right).} 
 \label{fig_decay_bound}
\end{figure}
\end{example}

\section{Efficient Neural Network Representation for Separable Structures}\label{sec:NN}

In this section, we examine how neural networks can efficiently represent functions exhibiting decaying sensitivity. While neural networks are universal approximators, they typically require an exponential growth in neurons and parameters as input dimensionality increases. However, we demonstrate that for functions $V$ possessing a separable approximation, this curse of dimensionality can be mitigated. Under appropriate conditions, such functions can be represented with neural networks requiring a number of parameters which only grows polynomially. This result is consistent with existing literature showing that certain structural properties enable polynomial parameter scaling with dimension. For example, a similar result holds for compositional functions (cf. \cite{kang2022}) and in particular, for the sub-class of separable functions. The approximation of separable functions with neural networks has been discussed in \cite{gruene2021_ComputingLyapunovFunctions,sperl2024} in the context of (control) Lyapunov functions. While the approach and the neural network construction in the following theorem is similar, there are two major differences compared to these works. The first one is that \cite{gruene2021_ComputingLyapunovFunctions,sperl2024} considers strictly separable functions, meaning that each component $x_i$ of the state $x$ appears in exactly one subsystem. This is not the case in the framework of this paper, since we achieve separability by considering \emph{overlapping} graph neighborhoods, whence we cannot directly apply the results in \cite{gruene2021_ComputingLyapunovFunctions,sperl2024} towards our setting. On the other hand, in \cite{gruene2021_ComputingLyapunovFunctions,sperl2024} the authors used a linear transformation as the first hidden layer for the computation of suitable substates to achieve separability. This is not necessary in our case, since for any given length $l$ of the graph-neighborhoods, the vectors $z_{j,l}$ are uniquely determined by the graph $\mathcal{G}$. 

In the following, a neural network refers to a feedforward network with a one-dimensional output and a linear activation function in its output neuron. We associate each neural network with its corresponding function, denoted by \( W(x; \theta) \colon \mathbb{R}^n \to \mathbb{R} \), where \( x \in \mathbb{R}^n \) represents the input vector and \( \theta \) denotes the trainable parameters.

\begin{theorem} \label{thm:separableNN}
    Let $R > 0$ and consider the setting of Theorem \ref{thm.ExpDecay} with $\Omega := [-R,R]^n$.  Assume that $V$ is continuously differentiable on $\Omega$ with $\lVert V \rVert_{\mathcal{C}^1(\Omega)} \leq C_V$ for some $C_V > 0$. Let $\sigma \in \mathcal{C}^\infty(\R, \R)$ be not a polynomial. Then for any $\varepsilon > 0$ there exists a feedforward neural network $W(x; \theta)$ with activation function $\sigma$ and a corresponding parameter vector $\theta$ such that for any $x \in \Omega$
    \begin{equation*}
        \lvert W(x; \theta) - V(x) \rvert \leq \varepsilon ( 1 + \lVert x \rVert_2^2).
    \end{equation*}
    The number of neurons $N(W)$ in the network $W(x; \theta)$ and its number of trainable parameters $P(W)$ are bounded by
    \begin{equation*}
        N(W) \leq s^{d+1} C^d + n + 1, \quad P(W) \leq s^{d+1} C^d (d + s + 1) + 1, 
    \end{equation*}
    respectively, where $C > 0$ and $d > 0$ are independent of $s$ and $n$, but are determined by $\varepsilon$, $R$, $\widetilde{C}$, $\delta$, and $d_{j,l}$, where $\widetilde{C}$ and $\delta$ are defined as
    in Theorem~\ref{thm.ExpDecay}, and $d_{j,l}$ denotes the dimension of the subvectors $z_{j,l}$ introduced in Subsection~\ref{sec:construction}, with $l$ itself depending on $\varepsilon$, $\widetilde{C}$, and $\delta$.
\end{theorem}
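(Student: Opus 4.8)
The plan is to combine the separable approximation error bound from Theorem~\ref{thm.ExpDecay} with a standard neural-network approximation result for smooth functions on bounded domains, applied componentwise to each $\Psi_l^j$. First I would fix $\varepsilon > 0$ and choose the neighborhood length $l$ large enough that the separable approximation error is controlled: by Theorem~\ref{thm.ExpDecay}, $\lvert V(x) - \Psi_l(x) - V(0) \rvert \leq \widetilde{C}\delta^{l+1}\lVert x \rVert_2^2$, so picking $l$ with $\widetilde{C}\delta^{l+1} \leq \varepsilon/2$ makes this term at most $\tfrac{\varepsilon}{2}\lVert x\rVert_2^2$. Since $\delta \in (0,1)$ and $\widetilde{C}, \delta$ are independent of $s$ and $n$, the required $l$ depends only on $\varepsilon$, $\widetilde{C}$, $\delta$ — in particular it is \emph{independent of $s$ and $n$}, which is the crucial point that keeps the dimension $d = \max_j d_{j,l}$ of the component functions bounded.

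Next I would approximate each component function. The target is $V(0) + \sum_{j=1}^s \Psi_l^j(z_{j,l})$, where each $\Psi_l^j \colon \R^{d_{j,l}} \to \R$ is defined via differences of $V$, hence is continuously differentiable on $[-R,R]^{d_{j,l}}$ with $\mathcal{C}^1$-norm bounded in terms of $C_V$ (a difference of two evaluations of $V$ contributes at most $2C_V$ to the sup norm and, with a chain-rule bookkeeping, a controlled bound on the gradient). By the classical result that smooth non-polynomial activations give networks dense in $\mathcal{C}^1$ on compacts (Pinkus-type / the result underlying \cite{gruene2021_ComputingLyapunovFunctions,sperl2024}), for each $j$ there is a single-hidden-layer network $W_j$ on $\R^{d_{j,l}}$ with at most $C^d$ neurons (for a constant $C$ depending on $\varepsilon$, $R$, $C_V$, and $d$ but not on $s$ or $n$) such that $\lVert W_j - \Psi_l^j \rVert_{\infty, [-R,R]^{d_{j,l}}} \leq \varepsilon/(2s)$. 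Composing: the input layer extracts the $\leq s$ subvectors $z_{j,l}$ (each a coordinate selection, so this needs $\leq n$ identity-type neurons or can be folded directly into the first weight matrices), then $s$ parallel blocks realize the $W_j$, and the linear output neuron sums them and adds the constant $V(0)$. Summing the errors gives
\[
    \lvert W(x;\theta) - V(x) \rvert \;\leq\; \sum_{j=1}^s \lvert W_j - \Psi_l^j \rvert + \lvert V(x)-\Psi_l(x)-V(0)\rvert \;\leq\; \tfrac{\varepsilon}{2} + \tfrac{\varepsilon}{2}\lVert x\rVert_2^2 \;\leq\; \varepsilon(1 + \lVert x\rVert_2^2).
\]

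For the size bounds I would count: $s$ blocks, each with $\leq C^d$ neurons in its hidden layer and each neuron receiving $\leq d$ inputs plus a bias; adding an input layer of $\leq n$ neurons to route the selected coordinates and one output neuron gives $N(W) \leq s\, C^d \cdot (\text{const in } d) + n + 1$, which I would bookkeep into the stated $s^{d+1}C^d + n+1$ (the factor $s^{d+1}$ generously absorbs the fact that, without a linear first layer, one may duplicate shared coordinates across blocks; here the excerpt's construction in Subsection~\ref{sec:construction} already fixes each $z_{j,l}$ uniquely, so in fact $s$ copies suffice, but the looser $s^{d+1}$ bound is safe). Parameters per hidden neuron are $\leq d+1$ (weights in plus bias), plus the $\leq s+1$ output weights and bias, yielding $P(W) \leq s^{d+1}C^d(d+s+1) + 1$.

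The main obstacle is the $\mathcal{C}^1$ bound on the component functions $\Psi_l^j$ and, relatedly, ensuring the constant $C$ in the per-component network size genuinely does not depend on $s$ or $n$. The $\mathcal{C}^1$-norm bound follows because $\Psi_l^j(z_{j,l}) = V(x_{j,l}) - V(\Lambda_j x_{j,l})$ is a fixed difference of two restrictions of the single function $V$ to affine subspaces of $\Omega$, so $\lVert \Psi_l^j\rVert_{\mathcal{C}^1([-R,R]^{d_{j,l}})} \leq 2C_V$ uniformly in $j$ (the gradient of each term is a subvector of $\nabla V$ evaluated at a point of $\Omega$). Since the quantitative $\mathcal{C}^1$-approximation rate for a non-polynomial smooth activation depends only on the input dimension $d_{j,l} \leq d$, the target accuracy $\varepsilon/(2s)$, the domain radius $R$, and the $\mathcal{C}^1$-norm $2C_V$ of the target — all of which are controlled independently of $s,n$ except the accuracy, whose $1/s$-dependence enters the rate only polynomially and is absorbed into $C^d$ after re-choosing $C$ — the per-block neuron count is $\leq C^d$ with $C$ independent of $s$ and $n$. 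This is exactly the place where the construction differs from \cite{gruene2021_ComputingLyapunovFunctions,sperl2024}: the overlapping neighborhoods mean the $z_{j,l}$ share coordinates, but since they are still determined purely by $\mathcal{G}$ and $l$, no linear transformation layer is needed and the bound is unaffected.
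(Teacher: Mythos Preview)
Your overall strategy matches the paper's: choose $l$ so that Theorem~\ref{thm.ExpDecay} gives a separable error $\le \varepsilon\lVert x\rVert_2^2$, bound $\lVert\Psi_l^j\rVert_{\mathcal{C}^1}\le 2C_V$ from the definition $\Psi_l^j(z_{j,l})=V(x_{j,l})-V(\Lambda_j x_{j,l})$, approximate each $\Psi_l^j$ by a one-hidden-layer network, and sum. The paper does exactly this, invoking Mhaskar's quantitative rate (Theorem~2.1 in \cite{mhaskar1996_NNApproxSmoothFunctions}) for the per-block approximation.

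There is, however, a genuine gap in your bookkeeping of the $s$-dependence. You assert that each block $W_j$ needs at most $C^d$ neurons with $C$ \emph{independent of $s$} to achieve accuracy $\varepsilon/(2s)$. That is not possible with the rate you are using: Mhaskar's bound gives $\lVert W_j-\Psi_l^j\rVert_\infty \le \mathrm{const}\cdot M^{-1/d_{j,l}}$, so reaching tolerance $\varepsilon/(2s)$ forces $M \gtrsim (s/\varepsilon)^{d}$, i.e.\ the per-block neuron count already carries a factor $s^d$. Your later attempt to ``absorb the $1/s$-dependence into $C^d$ after re-choosing $C$'' while still declaring $C$ independent of $s$ is self-contradictory, and your alternative explanation --- that the extra factor $s^d$ in $s^{d+1}$ comes from duplicating shared coordinates across overlapping blocks --- is wrong: coordinate sharing costs nothing in this architecture since each $z_{j,l}$ is a fixed coordinate selection wired directly from the input layer.

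The paper handles this cleanly by not splitting the error budget as $\varepsilon/(2s)$ per block. Instead it sums the Mhaskar bounds first,
\[
\inf_\theta \lVert W-\Psi_l\rVert_{\infty,\Omega} \le \sum_{j=1}^s 2C_V\mu_d\widetilde{R}\,M^{-1/d} = 2sC_V\mu_d\widetilde{R}\,M^{-1/d},
\]
and then solves for $M$, obtaining $M = s^d\lceil 2C_V\mu_d\widetilde{R}\varepsilon^{-1}\rceil^d =: s^d C^d$ with $C$ genuinely independent of $s$ and $n$. The total hidden-neuron count is then $sM = s^{d+1}C^d$, and the parameter count $\sum_j(d_{j,l}M+M)+sM+1 \le s^{d+1}C^d(d+s+1)+1$ follows. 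This is the missing step that makes the stated bounds honest.
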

\begin{proof} 
    Let $l \coloneqq \left\lceil{\ln\left(\epsilon {\widetilde{C}}^{-1} \right) \ln^{-1}(\delta) - 1} \right\rceil \in \N$, where $\left\lceil{\cdot }\right\rceil$ stands for the ceiling function. Let $\Psi_l$ be as defined in \eqref{eq:SepApprox:CDC_approx} and \eqref{eq:SepApprox:DefPsij}. Then, by Theorem \ref{thm.ExpDecay}, we obtain that for all $x \in \Omega$
    \[
    |V(x) - \Psi_l(x)| \leq \widetilde{C} \delta^{l+1} \|x\|^2_2 \leq \epsilon \|x\|^2_2.
    \]
    Thus, it is sufficient to show the existence of a neural network that approximates $\Psi_l$ on $\Omega$ with error bounded by $\epsilon$. We begin by constructing subnetworks for approximating $\Psi_l^j$, $1 \leq j \leq s$. Recall that $\Psi_l^j$ depends on $z_{j, l} \in \R^{d_{j, l}}$. Since $\sigma$ is non-polynomial, by Theorem 2.1 in \cite{mhaskar1996_NNApproxSmoothFunctions} and its extension to cubes with length $R$ in \cite{kang2022}, there exists a constant $\mu_{d_{j, l}}$ such that any neural network $W_j(\cdot; \theta_j) \colon \R^{d_{j, l}} \to \R$ consisting of one hidden layer with $M \in \mathbb{N}$ neurons and activation function $\sigma$ satisfies
    \[
    \inf_{\theta_j} |W_j(z_{j, l}; \theta_j) - \Psi_l^j(z_{j, l})|_{\infty, [-R,R]^{d_{j,l}}} \leq \mu_{d_j,l} \, \widetilde{R} \, {M}^{- \frac{1}{d_{j, l}}} \|\Psi_l^j\|_{C^1([-R,R]^{d_{j,l}})},  
    \] 
    where $\widetilde{R} \coloneqq \max\{1, R \}$. Since, by assumption, $V$ has a $C^1$-norm bounded by $C_V$ on $[-R,R]^n$, it follows from \eqref{eq:SepApprox:DefPsij} that $\Psi_l^j$ has $C^1$-norm bounded by $2 C_V$ on $[-R,R]^{d_{j,l}}$, leading to
    \begin{equation} \label{eq:NN:estimate_W_j}
        \inf_{\theta_j} |W_j(z_{j, l}; \theta_j) - \Psi_l^j(z_{j, l})|_{\infty, [-R,R]^{d_{j,l}}} \leq 2  C_V \mu_{d_j,l} \, \widetilde{R} \, {M}^{- \frac{1}{d_{j, l}}}. 
    \end{equation}
 We now construct the neural network \( W(x; \theta) \) as concatenation of the \( s \) subnetworks \( W_1, \dots, W_s \). The network \( W(x; \theta) \) takes an \( n \)-dimensional input vector \( x \), with each subnetwork \( W_j \) then independently processing its corresponding input \( z_{j,l} \), connected to the relevant components of \( x \). The outputs of the subnetworks are aggregated via summation to compute \( W(x; \theta) \). Since each \( W_j \) outputs a linear combination of its \( M \) hidden-layer neurons, these combinations can be unified into a single linear layer connecting the \( sM \) neurons of the subnetworks to the output neuron of \( W(x; \theta) \). By \eqref{eq:NN:estimate_W_j}, the network \( W(x; \theta) \) then satisfies 
    \begin{align*}
        \begin{split} \label{eq:NN:estimate_W}
                    & \inf_{\theta} |W(x; \theta) - \Psi_l(x)|_{\infty, \Omega} \leq \sum_{j=1}^s \inf_{\theta_j} |W_j(z_{j, l}; \theta_j) - \Psi_l^j(z_{j, l})|_{\infty, [-R,R]^{d_{j,l}}} \\ 
                    \leq &  2 s C_V \mu_{d} \, \widetilde{R} \, {M}^{- \frac{1}{d}},  
        \end{split}
    \end{align*}
where $d \coloneqq \max_j d_{j,l}$. Solving for $M$ yields $ M \geq s^{d} (2 C_V \mu_{d} \widetilde{R})^{d} \frac{1}{\varepsilon^{d}}$. 
To establish the existence of a parameter $\theta$ that achieves the desired error bound \( \varepsilon \), we choose 
\begin{equation*}
    M = s^{d} \left\lceil 2 C_V \mu_{d} \widetilde{R} {\varepsilon}^{-1} \right\rceil^d =: s^{d} C^{d} 
\end{equation*}
hidden neurons for each network $W_j$. Hence, the resulting neural network $W$ has at most \( sM = s^{d+1} C^d \) hidden neurons, $n$ input neurons, and one output neuron. Since each subnetwork \( W_j \) contains \( d_{j,l} M \) weights and \( M \) bias terms, and the final linear layer connecting the \( sM \) neurons to the output neuron involves \( sM \) weights and one bias term, the total number of parameters is 
\begin{equation*}
    \sum_{j=1}^s (d_{j,l} M + M) + sM + 1 \leq s^{d+1} C^{d} (d + s + 1) + 1.
\end{equation*}
This completes the proof.
\end{proof}

\begin{figure}
    \centering
    \includegraphics[width=0.6\linewidth]{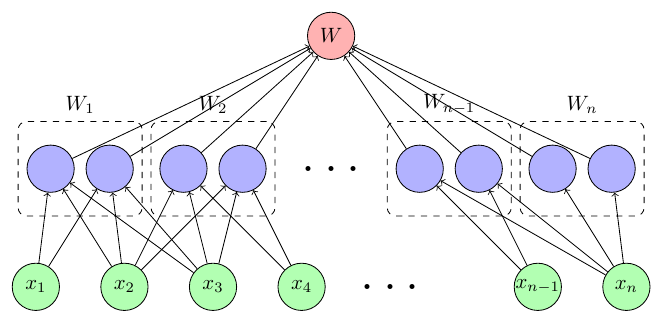}
    \caption{Illustration of the separable structured neural network (S-NN) described in the proof of Theorem \ref{thm:separableNN} for the case of $s = n$ one-dimensional subsystems with $l = 2$ and $\distG(i,j) = \abs{i-j}$.}
    \label{fig:NN_architecture}
\end{figure}

Besides the theoretical value of the result above,  it also provides a systematic approach for designing a neural network with a separable structure, referred to as a Separable-structured Neural Network (S-NN). Figure \ref{fig:NN_architecture} illustrates the architecture of the neural network \( W \) as constructed in the proof of Theorem \ref{thm:separableNN}. 
This example considers the case of one-dimensional subsystems arranged in a sequential graph, where the graph distance is defined as \( \distG(i,j) = |i-j| \), with \( s = n \) and \( z_j = x_j \) for \( 1 \leq j \leq n \). The selected graph-distance parameter is \( l = 2 \), which determines the connectivity of the subnetworks. Specifically, the subnetwork \( W_1 \) is connected to the input nodes \( x_1, x_2 \), and \( x_3 \), while \( W_2 \) is connected to \( x_2, x_3 \), and \( x_4 \). This connectivity pattern continues sequentially, ensuring that subnetwork $W_j$ is connected to input nodes $x_j$, \dots, $x_{j+l}$. The final subnetwork $W_n$ only processes the last subsystem given by $x_n$. \\

Theorem \ref{thm:separableNN} provides an upper bound on the size of a neural network required to represent $V$ under the assumption of exponential decay, as stated in Theorem \ref{thm.ExpDecay} A similar result also holds under the assumption of polynomial decay, as considered in Theorem \ref{thm.PolyDecay}, with the difference that the constants $C$ and $d$ in Theorem \ref{thm:separableNN} are increased. For conciseness, the corresponding result is omitted. 

\begin{remark}
Theorem \ref{thm:separableNN} states that, under the assumption of exponentially decaying sensitivity, neural networks can approximate $V$ with a number of neurons and parameters that grows polynomially with respect to the number of subsystems $s$, where the exponent is $d$. The maximum dimension of a graph-neighborhood $d$ depends on $l$, which is determined by $\varepsilon, \widetilde{C}$, and $\delta$, but independent of $s$ and $n$. Thus, for a family of problems with increasing dimension and uniform decaying sensitivity, where, for any fixed $l$, the maximum neighborhood dimension $d$ remains constant, Theorem \ref{thm:separableNN} describes a situation where the curse of dimensionality is avoided. This applies, for example, to a sequential graph with one-dimensional subsystems, where $d = l + 1$ for any number of subsystems. However, it is important to note that Theorem \ref{thm:separableNN} does not address the required growth of the network in relation to the accuracy \(\varepsilon\).   
\end{remark}

\section{Numerical Experiments: Neural Network Approximations for High-Dimensional Problems} \label{sec:num}

In this section, we provide a detailed analysis of the capability of neural networks to approximate solutions for high-dimensional problems with decaying sensitivity, as stated in Theorem \ref{thm:separableNN}. The first numerical experiment focuses on a LQR example, where the data of the optimal control problem exhibits a banded structure. In the second example, we consider a more general setting, in which the function is no longer quadratic but continues to display sensitivity decay.
The numerical simulations reported in this paper are performed on an Nvidia GeForce RTX 3070. The codes are written in Python 3.9 with TensorFlow 2.10 and are available at \url{https://github.com/MarioSperl/SNN-DecayingSensitivity}.

\subsection{Linear quadratic optimal control problems} \label{subsec:Numerics_LQR}
In this subsection, we first introduce a training algorithm designed to enable a neural network to approximate the optimal value function of linear quadratic optimal control problems exhibiting exponentially decaying sensitivity. Subsequently, we highlight the benefits of the separable network architecture depicted in Figure \ref{fig:NN_architecture} compared to a conventional fully connected network. Finally, we present a numerical experiment that investigates the impact of varying the problem dimension, illustrating how the number of neurons and parameters scales with increasing dimensions in linear quadratic problems.

Consider a continuous-time linear quadratic optimal control problem as described in \eqref{eq:LQR_continuous}. The system matrix $A \in \Rnn$ is randomly generated with a bandwidth of 1, $i.e.$ the matrix is tridiagonal, and entries uniformly distributed in $(0,1)$, using the Python  command \texttt{numpy.random}. Additionally, the matrices $B,Q$ and $R$ are all set to the identity matrix $I_n$. This induces a graph structure as in Figure \ref{fig.NeighborhoodIllustration} with $\distG(i, j) = \abs{i-j}$. From Proposition \ref{prop.ExistenceDecayCont} it follows that the solution $P$ of the corresponding ARE \eqref{ARE} is an SED-matrix, which by Corollary \ref{cor:decay_P} implies that $V(x) = x^T P x $ exhibits an exponentially decaying sensitivity. For dimension $n = 200$, the decay of the first and hundredth row in $P$ are shown in the right column in Figure \ref{fig:V_200}. 
To train the neural network to approximate the value function $V$ on the domain $\Omega = [-1,1]^{200}$, we adopt a supervised learning approach with precomputed, randomly generated, and independently chosen training, validation and test data $\dataTrain, \dataVal, \dataTest \subset \Omega$. These datasets are created by uniformly sampling data points from $\Omega$ and computing the corresponding values of $V$ using the solution $P$ of the ARE. The Mean Squared Error (MSE) on $\cD \in \lbrace \dataTrain, \dataVal, \dataTest \rbrace$ is given as
\begin{equation} \label{eq:NN_ocp:MSE}
    \frac{1}{|\cD|} \sum_{x \in \cD} (V(x) - W(x; \theta))^2. 
\end{equation}
Training is conducted on $\dataTrain$ using a batch-size of $64$, the ADAM optimizer with its default learning rate in TensorFlow, the sigmoid activation function $\sigma(x) = \frac{1}{1 + e^{-x}}$, and TensorFlow’s default parameter initialization. These choices of hyperparameters for training have been applied consistently to all examples presented in this section. Every $10$ epochs, the MSE on the validation dataset $\dataVal$ is evaluated to check whether it falls below a predefined tolerance $\varepsilon$. If this criterion is met or if the maximum number of epochs is reached, the training process is terminated. The final model is then verified by calculating the MSE on the test dataset. 

Particularly in high-dimensional settings, it turned out that incorporating information about the gradient and the values at the origin into the training process can offer notable advantages. To address this, we define a modified loss function $L \colon \Rn \times \R \times \Rn$ as follows:
\begin{align*}
    L(x, W(x; \theta), \nabla W(x; \theta)) \coloneqq & (V(x) - W(x; \theta))^2  + \nu_g \lVert \nabla V(x) - \nabla_x W(x; \theta) \rVert_2^2 \\ & + \nu_z \Big( W(0; \theta)^2 +  \lVert \nabla_x W(0; \theta) \rVert_2^2 \Big). 
\end{align*}
Table \ref{tab:comparison_loss} compares training using the loss function \( \nu_g = \nu_z = 0.5 \) with training based on the standard MSE loss, which corresponds to setting \( \nu_g = \nu_z = 0 \) in $L$. The example is conducted using a fully connected neural network with a single hidden layer comprising 256 neurons across varying dimensions $n$ of the described LQR problem with $A$ having a bandwidth of $1$. The network was trained with $| \dataTrain | = | \dataVal | = 2^{17}$ data points uniformly randomly sampled in $[-1,1]^n$ until the MSE on the validation set as defined in \eqref{eq:NN_ocp:MSE} reached a tolerance of $10^{-3}$. The results demonstrate that incorporating positive values for $\nu_g $ and $ \nu_z $   significantly improves training efficiency, especially in high-dimensional settings. In particular, in the 50-dimensional case, setting $\nu_g = \nu_z = 0.5 $ allows the network to reach the tolerance after 240 epochs, whereas training with $\nu_g = \nu_z = 0$ results in an MSE of $1.3 \times 10^{-2}$ on the validation data after $10^4$ epochs.

\begin{table}[h]
    \centering
    \begin{tabular}{c|cc|cc}
        \toprule
        Dimension & \multicolumn{2}{c|}{$\nu_g = \nu_z = 0.5$} & \multicolumn{2}{c}{$\nu_g = \nu_z = 0$} \\
        \cmidrule(lr){2-3} \cmidrule(lr){4-5}
        & Result & Epochs & Result & Epochs \\
        \midrule
        10  & Tolerance reached & 10 & Tolerance reached & {50} \\
        20  & Tolerance reached & 20 & Tolerance reached & {190} \\
        50  & Tolerance reached & {240}  & Stopped after max. epochs & 1000 \\
        \bottomrule
    \end{tabular}
    \caption{Experimental results for different dimensions and choices of $\nu_g$ and $\nu_z$ in $L$.}
    \label{tab:comparison_loss}
\end{table}

Training with the loss function $L$ requires the computation of the gradient of the neural network with respect to the input, prior to evaluating the gradient of the loss with respect to the network parameters $\theta$. This is achieved using TensorFlow's gradient tape, which enables automatic differentiation by recording operations on tensors during the forward pass and computing gradients during the backward pass. The benefits of such a modified loss function in the context of control systems have also been emphasized in the literature. For example, \cite{azmi2021optimal, kunisch2023optimal, albi2022, dolgov2022data} discuss the inclusion of the information on the derivative, while \cite{kunisch2021semiglobal, breiten2021neural} consider the incorporation of the loss at the origin. Note that while the loss function $L$ is used for training, the mean squared error as defined in \eqref{eq:NN_ocp:MSE} is computed for the validation and test data. \\
We now consider the case $n = 200$ and employ a Separable-structured Neural Network (S-NN) as illustrated in Figure \ref{fig:NN_architecture}, characterized by a graph distance of $l=10$ and a sublayer-size of $M = 16$. This configuration results in $s = 200$ subnetworks, each connected to at most $11$ input nodes, with $16$ nodes in its hidden layer and one output node. Consequently, the total number of trainable parameters, determined by the weights and biases of the neural network, amounts to $40721$. \\ 
Figure \ref{fig:V_200} depicts the optimal value function $V$ and the trained neural network $W(x; \theta)$ projected to the $(x_{1}, x_{2})-$ axis and the $(x_{100}, x_{200})$ axis in the first two columns. The training was performed with $\nu_g = \nu_z = 0.5$, $\abs{\dataTrain} = \abs{\dataVal} = 131072$ points, and has reached a tolerance of $\varepsilon = 10^{-3}$ after {200} epochs. The MSE, computed using $\abs{\dataTest} = 2^{19}$ test points, is ${5.4} \cdot 10^{-4}$.

\begin{figure}[ht]
    \centering
    \begin{subfigure}{0.32\textwidth}
        \centering
        \includegraphics[width=\textwidth]{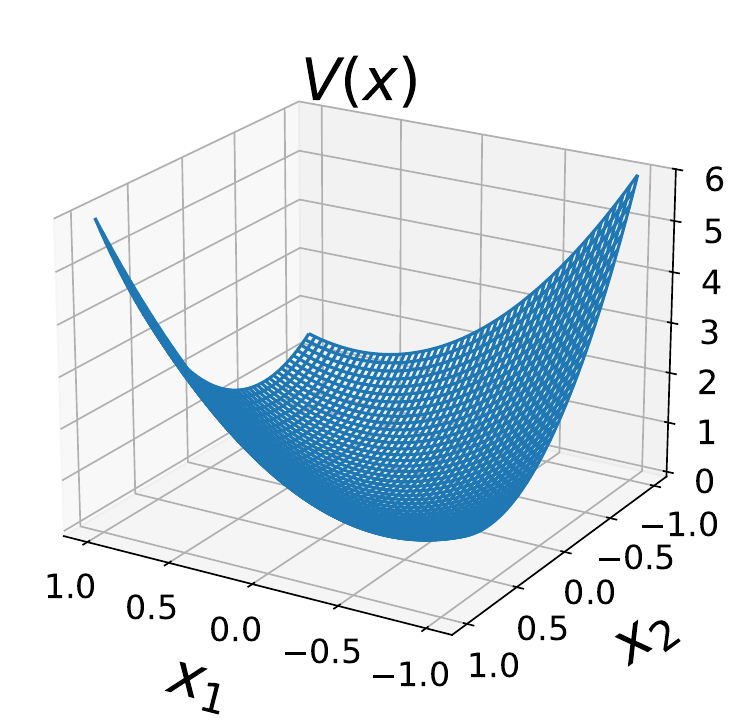}
    \end{subfigure}
    \hfill
    \begin{subfigure}{0.32\textwidth}
        \centering
        \includegraphics[width=\textwidth]{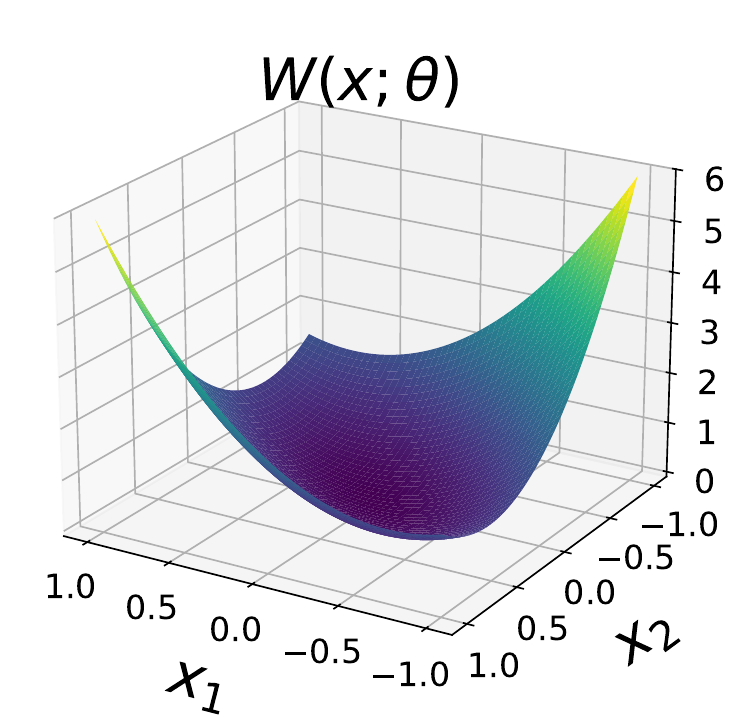}
    \end{subfigure}
    \hfill
    \begin{subfigure}{0.32\textwidth}
        \centering
        \includegraphics[width=\textwidth]{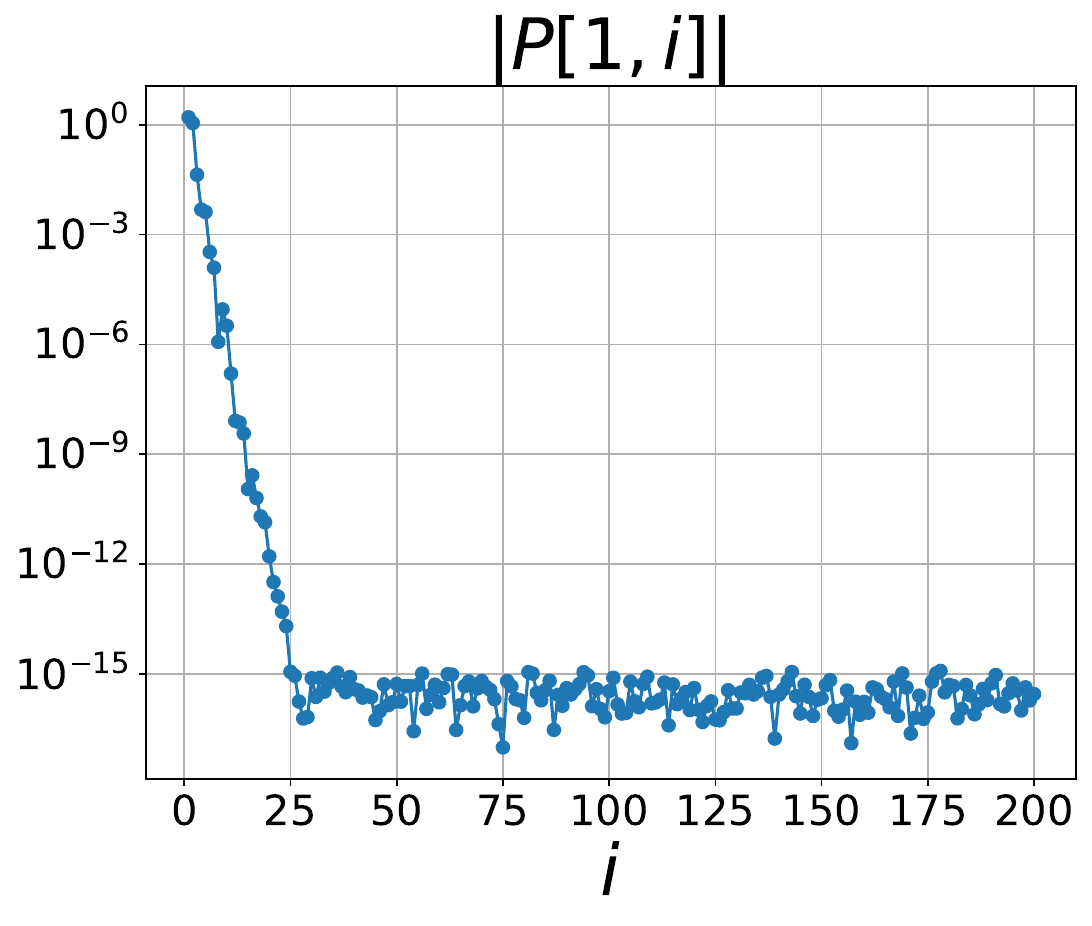}
    \end{subfigure}
    
    \vspace{0.5em} 

    \begin{subfigure}{0.32\textwidth}
        \centering
        \includegraphics[width=\textwidth]{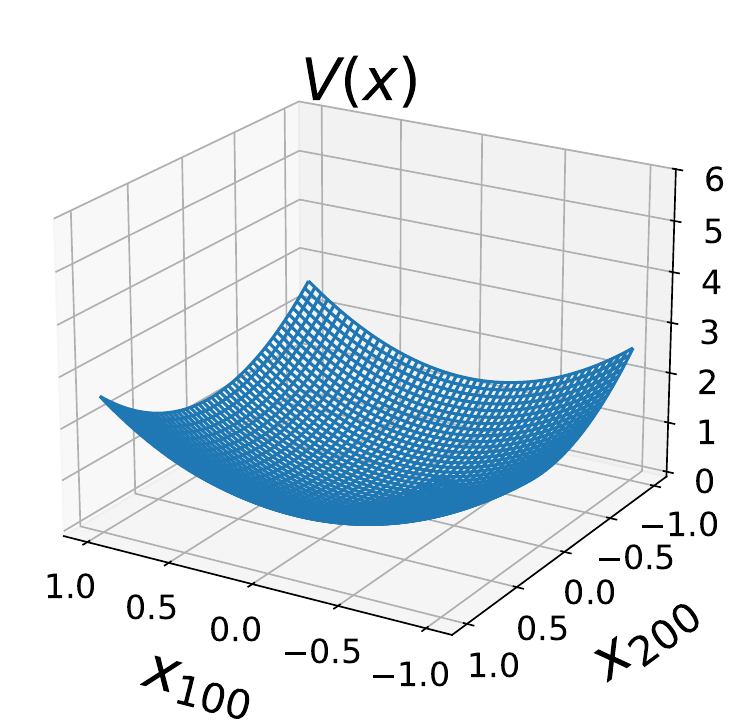}
    \end{subfigure}
    \hfill
    \begin{subfigure}{0.32\textwidth}
        \centering
        \includegraphics[width=\textwidth]{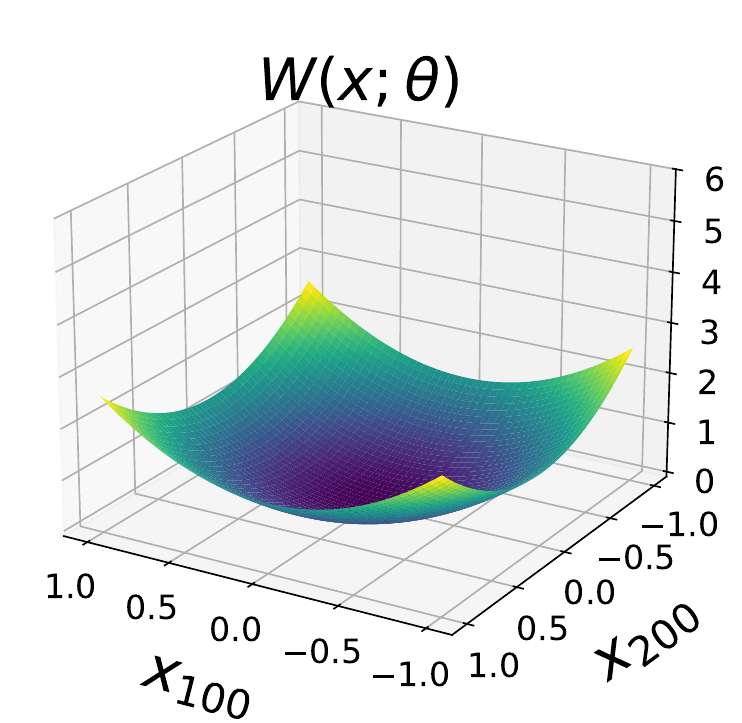}
    \end{subfigure}
    \hfill
    \begin{subfigure}{0.32\textwidth}
        \centering
        \includegraphics[width=\textwidth]{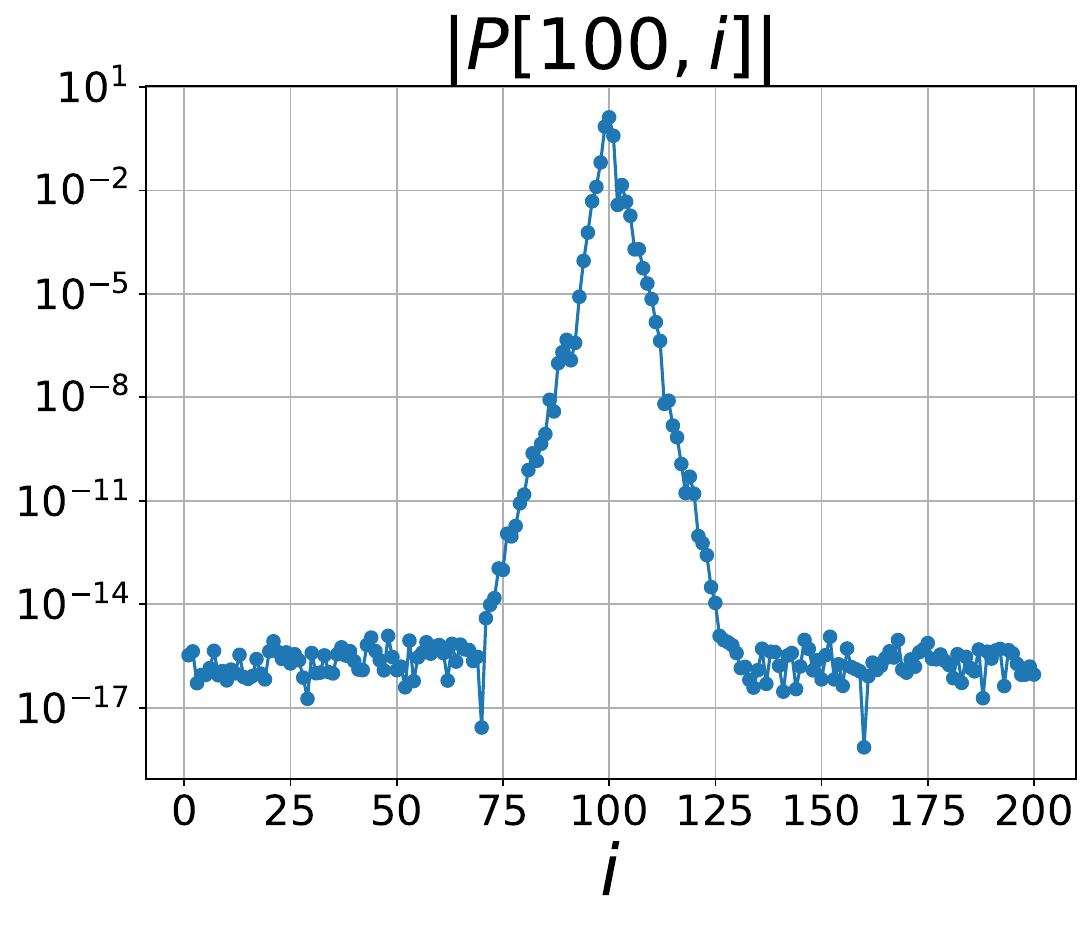}
    \end{subfigure}

    \caption{Plot of the optimal value function $V$ projected onto the $(x_1, x_2)$-axis (upper left) and onto the $(x_{100}, x_{200})$-axis (lower left), the S-NN approximation onto the same hyperplanes (middle column), and the decay in the first row of $P$ (upper right) and the row $100$ of $P$ (lower right).}
    \label{fig:V_200}
\end{figure}

From a theoretical perspective, any function that can be represented using the neural network architecture described in Figure \ref{fig:NN_architecture} can also be represented by a standard feedforward neural network with one hidden layer. However, by explicitly encoding the separable structure within the network architecture, we observe a significant improvement in training performance compared to a fully connected feedforward network. Table \ref{tab:200dim} provides a brief comparison of a fully connected network and a S-NN for the $200$-dimensional LQR problem using the same training parameters as in the experiment corresponding to Figure \ref{fig:V_200}. When the matrix \( A \) is fully connected, lacking any decaying property, the algorithm fails to converge for both network structures, with particularly poor performance observed for the S-NN. However, when \( A \) has bandwidth \( 1 \), introducing an exponential decaying sensitivity property, the fully connected network shows slightly improved performance but still fails to achieve the desired error tolerance within $10^4$ training epochs. In contrast, the S-NN meets the tolerance criterion $\varepsilon = 10^{-3}$ after {$200$} epochs.

\begin{table}[h!]
	\centering
	\begin{tabular}{lccccc}
		\toprule
		\textbf{bandwidth} & \textbf{(sub)-layersize} & \textbf{architecture} & {\textbf{$l$}} & \textbf{test error} & \textbf{epochs} \\
		\midrule
		200  & 2048  & fully conn. & - & $ {1.9} \times 10^0 $ & 1000 \\
		200  & 16 & S-NN & 10 & $8.0 \times 10^3$ & 1000 \\ 
		1  & 2048 & fully conn. & - &  ${3.5} \times 10^{-1}$ & 1000 \\ 
        1  & 16 & S-NN & 10 & ${5.4} \times 10^{-4}$ & {200} \\ 
		\bottomrule
	\end{tabular}
	\caption{A comparison of test errors for a $200$-dimensional example conducted under varying bandwidths, emphasizing the effect of utilizing a separable network structure.}
    \label{tab:200dim}
\end{table}

Another criterion demonstrating the advantage of the S-NN architecture is the reduced number of training points and epochs required to achieve a stopping tolerance of $\varepsilon = 10^{-3}$. Consider a $50$ dimensional example where the matrix $A$ again has a bandwidth of $1$. We compare a fully connected single layer network with $13313$ parameters to a S-NN with $l=10$ and $9521$ parameters. Figure \ref{fig:min_data} presents the results obtained by varying the number of training points $| \dataTrain|$ uniformly randomly chosen in $[-1,1]^{50}$. The maximum number of epochs was adjusted based on the size of the training dataset to ensure the algorithm had sufficient time to train: specifically, it was set to $1000$ for $2^{17}$ training points, $2000$ for $2^{16}$ points, and so on, doubling with each halving of the training set size. As before, the training loss parameters were set to $\nu_g = \nu_z = 0.5$. We observe that, on one hand, the separable structured network requires significantly fewer training points to reach the desired tolerance in the test dataset, see Figure \ref{fig:min_data}. On the other hand, for larger amounts of training data, where both architectures meet the tolerance, the S-NN requires considerably fewer epochs, see Figure \ref{fig:min_epochs}. 

\begin{figure}[h!]
    \centering
    \begin{minipage}[t]{0.45\textwidth}
    \centering
    \includegraphics[width=1\linewidth]{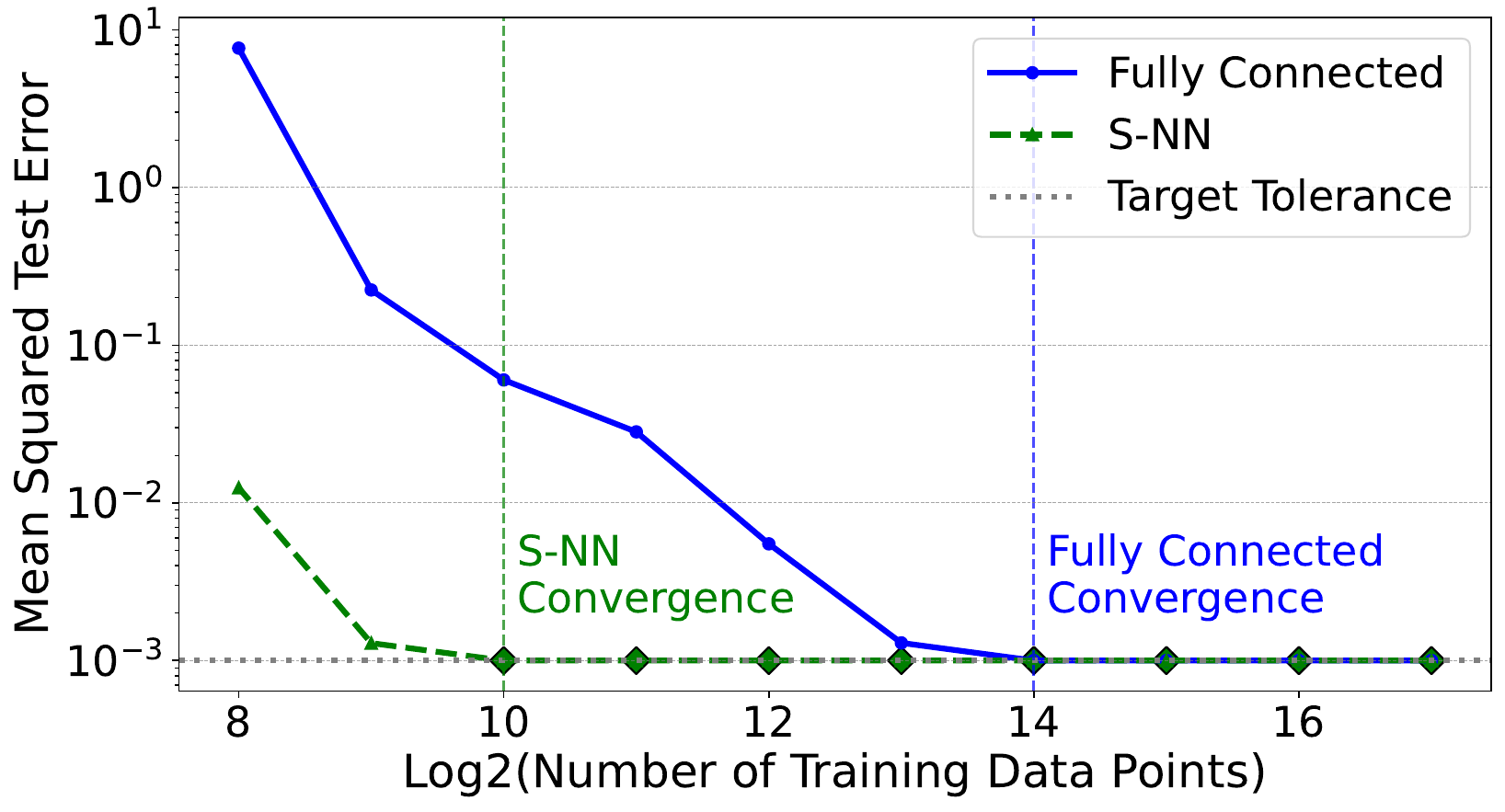}
    \caption{Comparison of test error (log-scale) against the number of training points.}
    \label{fig:min_data}
    \end{minipage}%
    \hfill 
    \begin{minipage}[t]{0.45\textwidth}
    \centering
    \includegraphics[width=\linewidth]{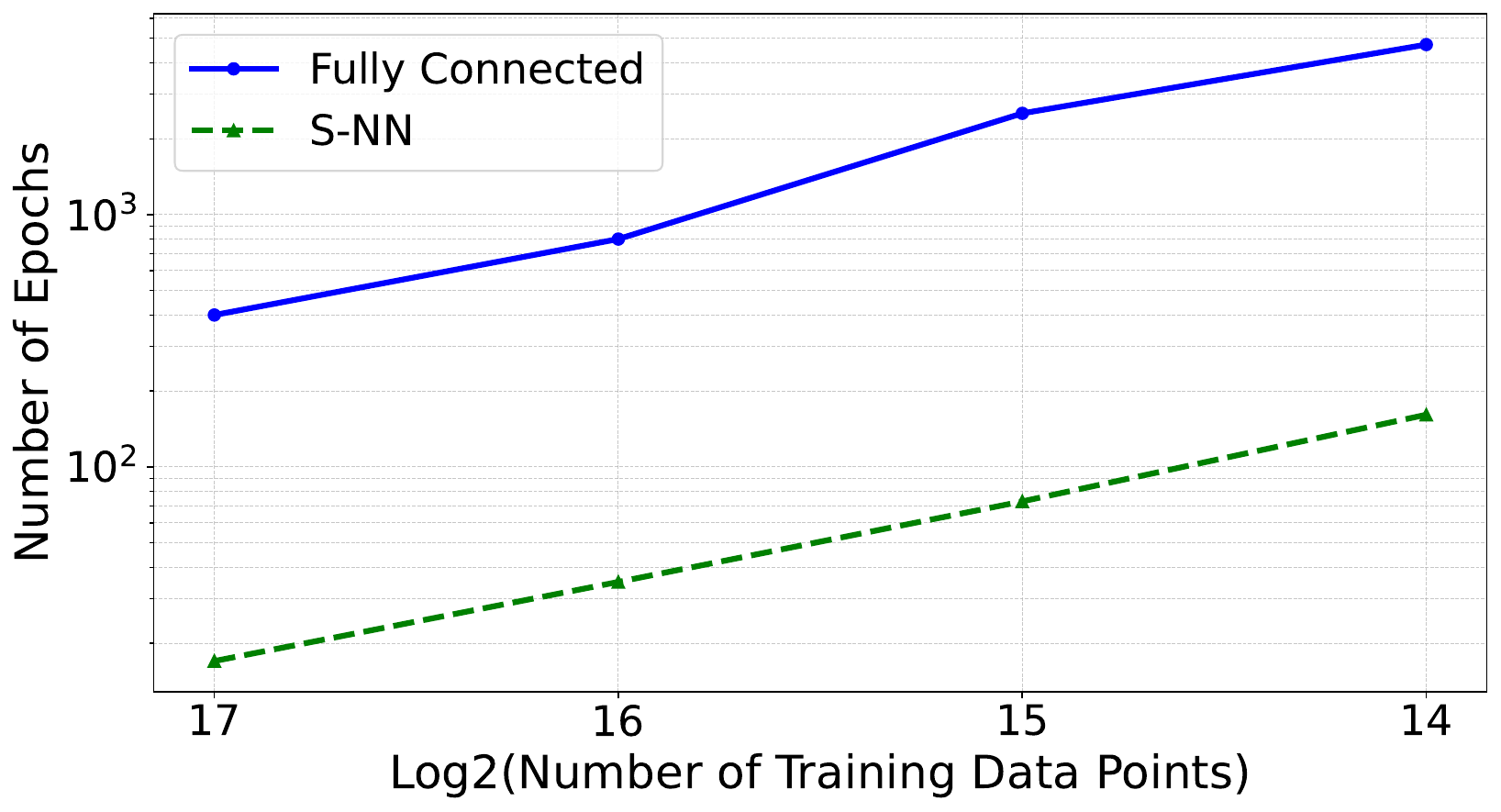}
    \caption{Comparison of training epochs (log-scale) against the number of training points.}
    \label{fig:min_epochs}
    \end{minipage}
\end{figure}

Furthermore, we compute approximations of the optimal value function $V$ for linear quadratic optimal control problems with increasing state dimensions, ranging from $20$ to $120$ in increments of $10$. Specifically, we considered two different cases: one where the matrix $A$ has bandwidth $1$, and another where $A$ has bandwidth $3$. We use a S-NN as shown in Figure \ref{fig:NN_architecture} with $n = s$ subnetworks and $\nu_g = \nu_z = 0.5$. The training dataset size was fixed at 4069 points, uniformly sampled in the unit ball, ensuring consistency with the framework established in Theorem \ref{thm.ExpDecay}. The graph-distance $l$ was fixed to $3$ in all cases. 
For each dimension, the size of the subsystems' layers was minimized while maintaining an MSE in the test data below \(10^{-4}\). It turned out that for both bandwidths and across all dimensions, only \(M = 2\) neurons per sublayer were required to satisfy this error threshold. Consequently, the total number of neurons and parameters increases linearly with the dimensionality of the problem, as illustrated in Figure \ref{fig:min_parameters}. 
These findings suggest that the polynomial bound established in the theorems in Section \ref{sec:sep} may not be sharp for LQR problems.
It is important to note that, while the graph for the case with bandwidth equal to 1 is sequential, as shown in Figure \ref{fig.NeighborhoodIllustration}, the increased number of connections in the case of bandwidth equal to 3 (depicted in Figure \ref{fig:bandwidth3}) leads to higher-dimensional graph neighborhoods. Specifically, for our choice of $l = 3$ we have $d = 4$ for the first case and $d = 10$ for the second. This, in turn, results in a larger number of trainable parameters for the setting with bandwidth equal to $3$, as shown in Figure \ref{fig:min_parameters}.  

\begin{figure}[h!]
    \centering
    \begin{minipage}[t]{0.45\textwidth}
       \centering
    \includegraphics[width=\textwidth]{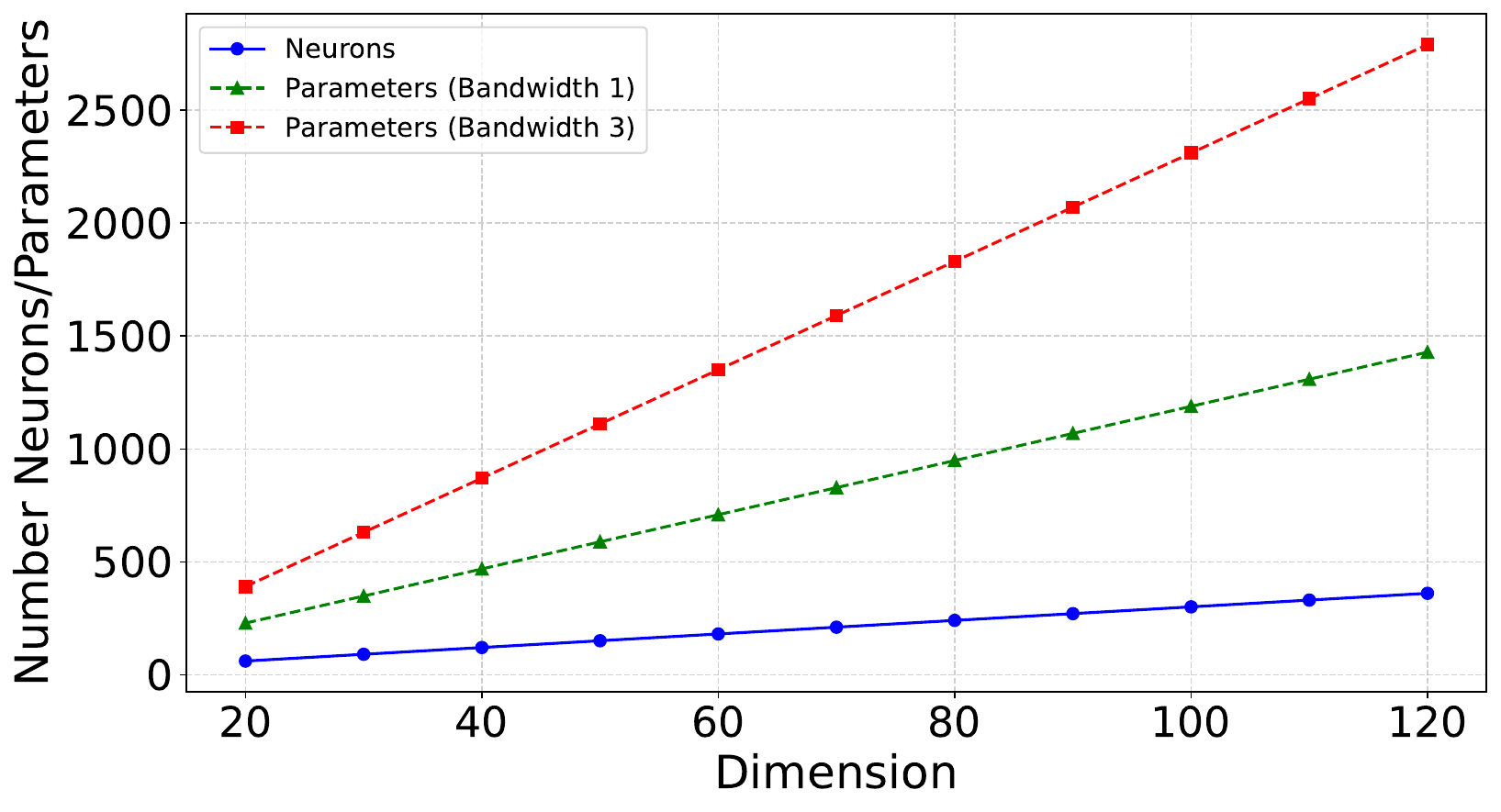}
    \caption{Number of neurons and parameters for bandwidths $1$ and $3$ in dependence of the dimension.}
    \label{fig:min_parameters}
    \end{minipage}%
    \hfill 
    \begin{minipage}[t]{0.45\textwidth}
    \centering
    \includegraphics[width=\linewidth]{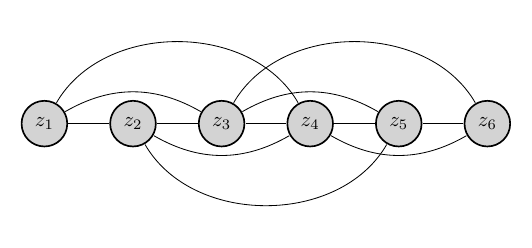}
    \caption{Graph corresponding to $A$ with bandwidth equal to $3$ and $B=Q=R = I_n$. }
    \label{fig:bandwidth3}
    \end{minipage}
\end{figure}

\begin{remark}
    While the experiments in this section involving the S-NN network structure exploit the decaying sensitivity in the $P$-matrix to approximate $V(x) = x^T P x$, we would like to point out that this decay does not imply a decay in the singular values of $P$. This contrast is illustrated in Figure~\ref{fig:SVD}, which shows the decay of the first column of $P$ (left) and the singular values of $P$ (right), for a linear quadratic problem with tridiagonal $A \in \mathbb{R}^{n \times n}$ and $B = Q = R = I_n$, with $n = 100$. We conclude that standard low-rank approximation techniques cannot exploit the decaying sensitivity property.
\end{remark}

\begin{figure}[h!]
    \centering
    \begin{minipage}[t]{0.45\textwidth}
    \centering
    \includegraphics[width=1\linewidth]{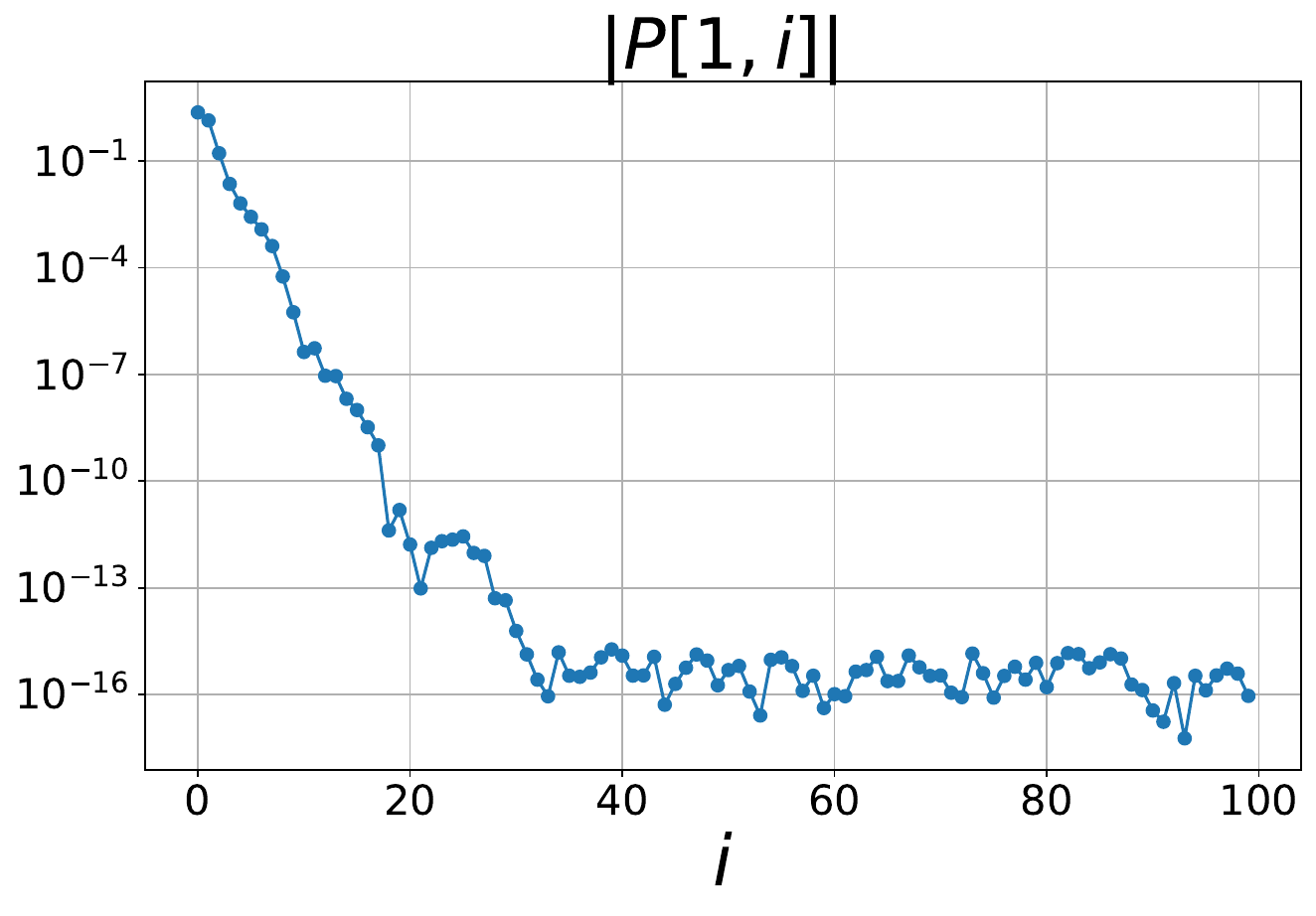}
    \end{minipage}%
    \hfill 
    \begin{minipage}[t]{0.45\textwidth}
    \centering
    \includegraphics[width=\linewidth]{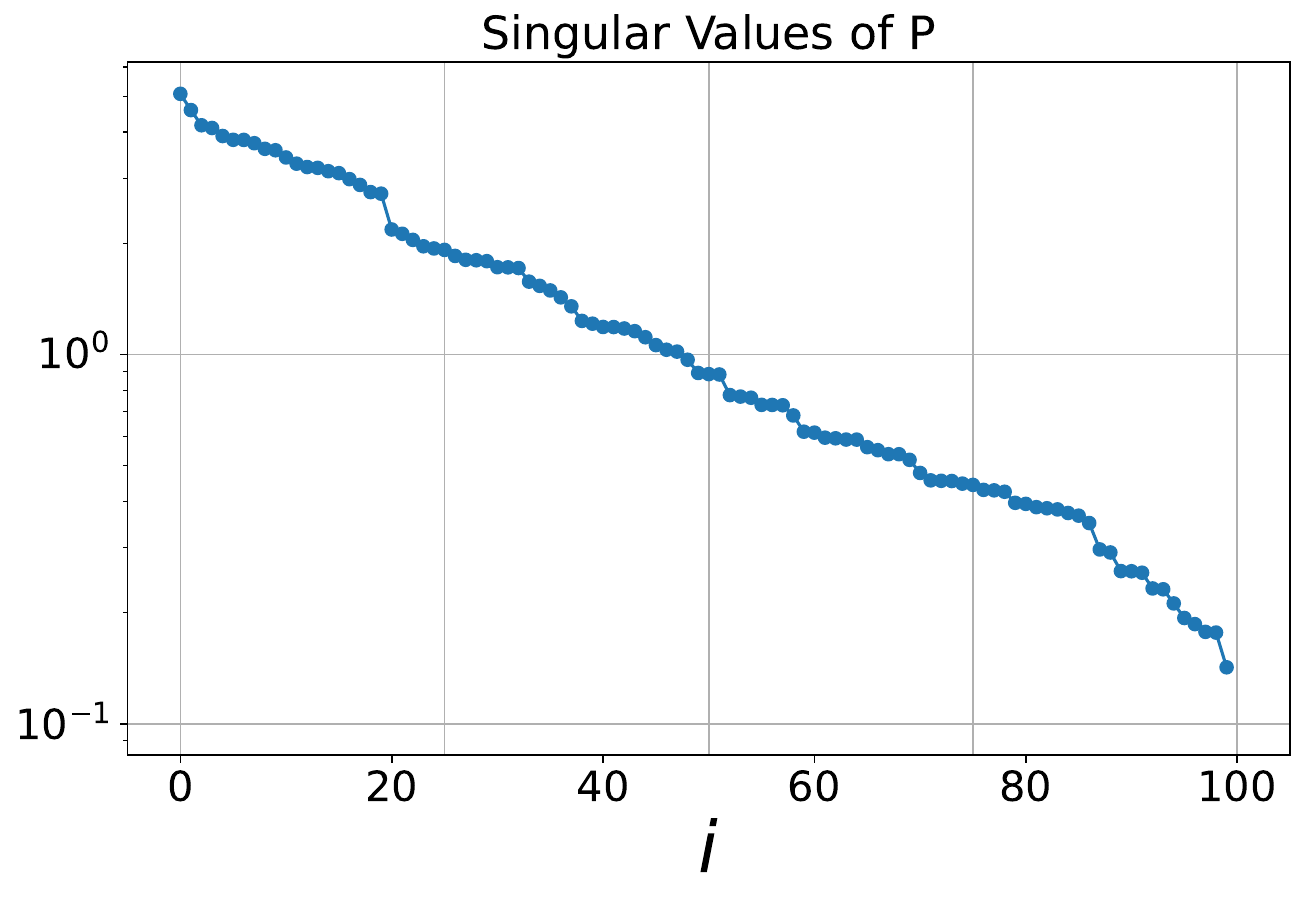}
    \end{minipage}

    \caption{Comparison of the decaying sensitivity with the singular values of $P$.}
    \label{fig:SVD}
\end{figure}

\subsection{Approximation of a non-quadratic function}
In this subsection, we briefly discuss an application of the training algorithm presented in Subsection \ref{subsec:Numerics_LQR} towards the approximation of non-quadratic functions. To this end, for $\rho \in (0,1]$ we introduce the function $V \colon [-1,1]^{100} \to \R$ defined as 
\begin{equation*}
    V(x) = \sum_{i, j=1}^n \sin(x_i) \sin(x_j) \rho^{\abs{i-j}},
\end{equation*}
and consider a graph $\mathcal{G}$ with nodes $\lbrace 1, \dots, n \rbrace$ and sequential connections leading to $\distG(i,j) = \abs{i-j}$ for $i,j \in \lbrace 1, \dots, n \rbrace$. Note that the function $V$ is not separable with respect to Definition \ref{def:separable} since each entry of its Hessian matrix is non-zero. However, for $\rho < 1$ it exhibits an exponentially decaying sensitivity with basis $\rho$. We employ a S-NN with a distance for the graph-neighborhoods of $l=5$, a sublayer-size of $M=16$ in each subnetwork, $\nu_g = \nu_z = 0.5$, and $8192$ training, validation, and test data. The resulting MSE after training for $10^4$ epochs is shown in Figure \ref{fig:error_vs_rho}, where we observe that the function is not well approximated for  $\rho = 1$, which corresponds to the case without decaying sensitivity. In contrast, for the case with exponentially decaying sensitivity, where $\rho = \frac{1}{8}$, the MSE falls below $10^{-4}$. \\ 
A similar behavior can be observed if we replace the exponential decaying sensitivity of $V$ with a polynomial decay. That is, we consider the function $\widetilde{V} \colon [-1,1]^{100} \to \R$ given as 
\begin{equation*}
    \widetilde{V}(x) = \sum_{i, j=1}^n \sin(x_i) \sin(x_j) (\abs{i-j}+1)^{-\alpha},
\end{equation*}
for different values of $\alpha \in \{ 0, 1, 2, 3\}$.  We use the same network architecture and training parameters as for $V$ and observe that the stronger the polynomial degree is, the better the resulting network approximation is, see Figure \ref{fig:error_vs_alpha}. 

It follows that the separable structured neural network facilitates the approximation of general functions, with the degree of effectiveness being influenced by the rate of decay of the sensitivity. Specifically, the network architecture improves the ability to capture functions with decaying sensitivity, with the approximation quality being directly related to the rate at which the sensitivity diminishes. As the sensitivity decays more rapidly, the separable structure becomes increasingly advantageous, enabling more accurate and efficient representations of the function in question.

\begin{figure}[h!]
    \centering
    \begin{minipage}[t]{0.45\textwidth}
    \centering
    \includegraphics[width=1\linewidth]{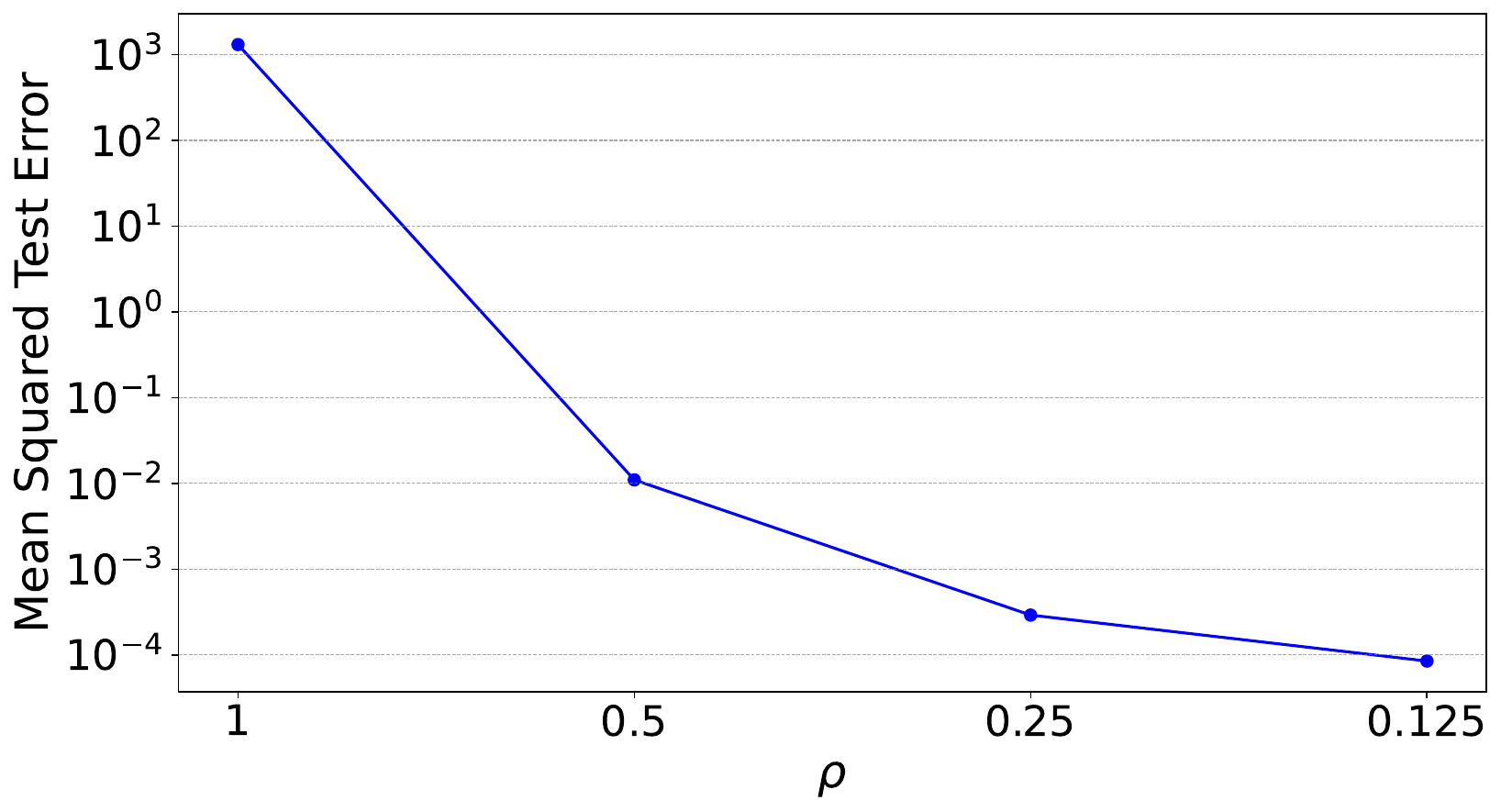}
    \caption{Test error (log-scale) for approximating $V$ (exponential decay) in dependence of $\rho$.}
    \label{fig:error_vs_rho}
    \end{minipage}%
    \hfill 
    \begin{minipage}[t]{0.45\textwidth}
    \centering
    \includegraphics[width=\linewidth]{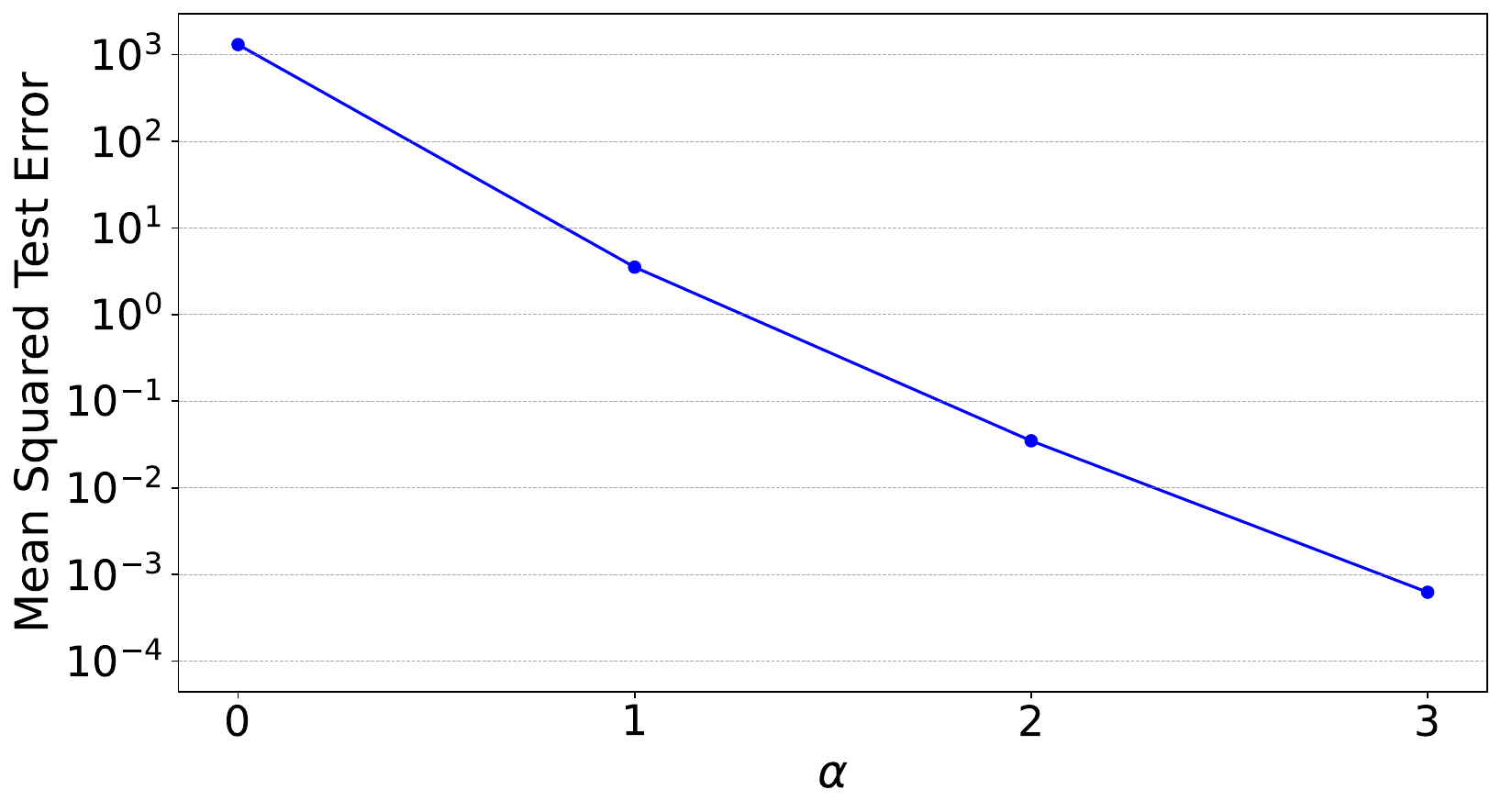}
    \caption{Test error (log-scale) for approximating $\widetilde{V}$ (polynomial decay) in dependence of $\alpha$.}
    \label{fig:error_vs_alpha}
    \end{minipage}
\end{figure}

\subsection{Nonlinear optimal control problem}

Finally, we turn our attention to a more challenging setting: the approximation of optimal value functions for systems governed by nonlinear dynamics. As a representative example, we consider the infinite-horizon optimal control problem associated with the Allen--Cahn equation, a nonlinear parabolic PDE of reaction-diffusion type, given by
\begin{equation*}
\left\{
\begin{aligned}
\partial_t y(t,x) &= \sigma\, \partial_{xx} y(t,x) + y(t,x)(1 - y(t,x)^2) + u(t,x), \\
y(0,x) &= y_0(x),
\end{aligned}
\right.
\end{equation*}
on the spatial domain \( x \in [0,1] \), equipped with homogeneous Neumann boundary conditions. The goal is to regulate the system toward the unstable steady state \( \tilde{y}(x) \equiv 0 \) by minimizing the quadratic cost
\[
\tilde{J}(y_0, u) = \int_0^\infty \int_0^1 \left( \delta_1 |y(t,x)|^2 + \delta_2|u(t,x)|^2 \right) dx \, dt, \quad \delta_1,\delta_2 >0.
\]
The optimal value function for this nonlinear problem is not available in closed form. In order to be able to assess the performance of the separable neural network architecture for this example, we thus learn an approximation of the optimal value function to which we can then compare the function represented by the network. For obtaining this approximation, we discretize the state in space using finite differences with \( n = 50 \) grid points, yielding a finite-dimensional nonlinear optimal control problem of the form
\begin{align} 
    \begin{split} \label{eq.AllenCahn}
        \min_{u \in \mathcal{U}_c} J(u, y_0) & =  \int_0^\infty  y(t)^\top Q y(t) +  u(t)^\top R u(t) \, dt,\\
        \dot{y}(t) & = A(y(t)) y(t) + B u(t),
    \end{split}
\end{align}
where \( y(t) \in \mathbb{R}^n \), \( A(y) \in \mathbb{R}^{n \times n} \) depends nonlinearly on the state, \( B = I_n\), $Q = \frac{\delta_1}{n} I_n$, and $R= \frac{\delta_2}{n} I_n $. We approximate the optimal value function \( V \) of \eqref{eq.AllenCahn} by a quadratic state-dependent ansatz: We set $V(y) = y^\top P(y) y $, 
where \( P(y) \) is obtained as the solution to the State-Dependent Riccati Equation (SDRE):
\begin{equation}
A^\top(y) P(y) + P(y) A(y) - P(y) B R^{-1} B^\top P(y) + Q = 0.
\label{SDRE}
\end{equation}

This construction does not solve the associated Hamilton--Jacobi--Bellman equation exactly, but provides a pointwise approximation of the value function by enforcing local optimality at each state \( y \). The SDRE approach constitutes a generalization of the classical LQR scheme, designed to incorporate the state-dependent nonlinearities present in the system dynamics. or a detailed discussion of the theoretical underpinnings, we refer to \cite{ccimen2008state} and the references therein, while applications in the context of optimal control for PDEs can be found in \cite{albi2022,alla2023state}.
Note that while $A(y) \in \R^{n \times n}$ depends on the state in this non-linear setting, its non-zero entries are independent of $y$. Thus, we can define the graph corresponding to $V$ with subsystems $z_j = y_j$, $1 \leq j \leq n$, analogously to the linear quadratic setting described in Section \ref{sec.ExistenceDecay} leading to a problem formulation fitting Setting \ref{setting:general} with $\distG(i,j) = | i - j |$. 

We now examine the emergence of a decaying sensitivity property in this nonlinear $50$-dimensional setting. By Definition \ref{def:setting:sensitivity} this requires to investigate the Lipschitz properties of the functions $g_j$ defined in \eqref{eq:setting:def_gj}. To this end, we fix the parameters
$\sigma = 10^{-2}$, $\delta_{1} = 10$, $\delta_{2} = 0.1$, and consider a fixed reference point $x \in \Rn$. For each index \(i \in \{1,\ldots,n\}\), we define a perturbed vector \(\tilde{x}_i \in \mathbb{R}^n\) by
\[
\tilde{x}_{i,k} = 
\begin{cases}
0, & \text{if } k = i, \\
x_k, & \text{otherwise}.
\end{cases}
\]
A numerical estimate of the local Lipschitz constant of \(g_j\) with respect to the \(i\)th coordinate is then given by
\[
\tilde{L}_{i,j}
= 
\frac{\lvert g_j(x) - g_j(\tilde{x}_i) \rvert}
{\|x - \tilde{x}_i\|}.
\]
The right panels of Figure~\ref{fig:V_50} report the empirical estimates \(\tilde{L}_{i,j}\) for \(j=1\) (top) and \(j=25\) (bottom). Each estimate is computed at two reference points obtained by setting the \(j\)-th component to \(\alpha\) and all other components to \(0.1\), with \(\alpha\in\{0.1,0.01\}\). Results are shown on a logarithmic scale. The observed exponential decay of $\tilde{L}_{i,j}$ with increasing $|i - j|$, as well as its decrease for smaller $\|z_j\|$, provide strong numerical evidence that $V$ satisfies the decaying sensitivity property as defined in Definition~\ref{def:setting:sensitivity}.

This structural property justifies the use of the separable neural network architecture presented in Section \ref{sec:NN} to leverage localized interactions to yield an accurate and efficient approximation of \(V\). We employ a S-NN architecture configured with a graph‑neighborhood radius of \(l = 3\) and \(M = 32\) neurons in each subnetwork, resulting in a total of \(9409\) trainable parameters. The dataset comprises \(1024\) samples for training, validation, and testing uniformly randomly chosen in $\Omega = \{ x \in \R^{50} \mid \| x \|_2 \le 1 \}$, respectively.  As regularization weights we chose \(\nu_{g} = 10^{4}\) and \(\nu_{z} = 10\). The large value of \(\nu_{g}\) discourages flat regions in the surrogate, while the stringent tolerance guarantees a close fit to the reference solution; a systematic study of these effects is left for future work. 

The left and middle panels in Figure~\ref{fig:V_50} shows the results of a training process conducted with a stopping tolerance of \(\epsilon = 10^{-7}\). Note that while training was carried out on $\Omega$, i.e., the unit ball in $\R^{50}$, the plots are generated for visualization purposes on two-dimensional slices of the hypercube $[-1,1]^{50}$. We depict two representative coordinate planes, namely \((x_1, x_2)\) (top) and \((x_{25}, x_{50})\) (bottom). The left panel depicts the true solution derived from the SDRE ansatz, while the middle panel shows the corresponding approximation generated by the S-NN. The visual agreement between the reference and surrogate value functions is strong, with the SNN model achieving a mean squared error in the test data of approximately \(8.32 \times 10^{-8}\) after 210 training epochs. 

\begin{figure}[ht]
    \centering
    \begin{subfigure}{0.32\textwidth}
        \centering
        \includegraphics[width=\textwidth]{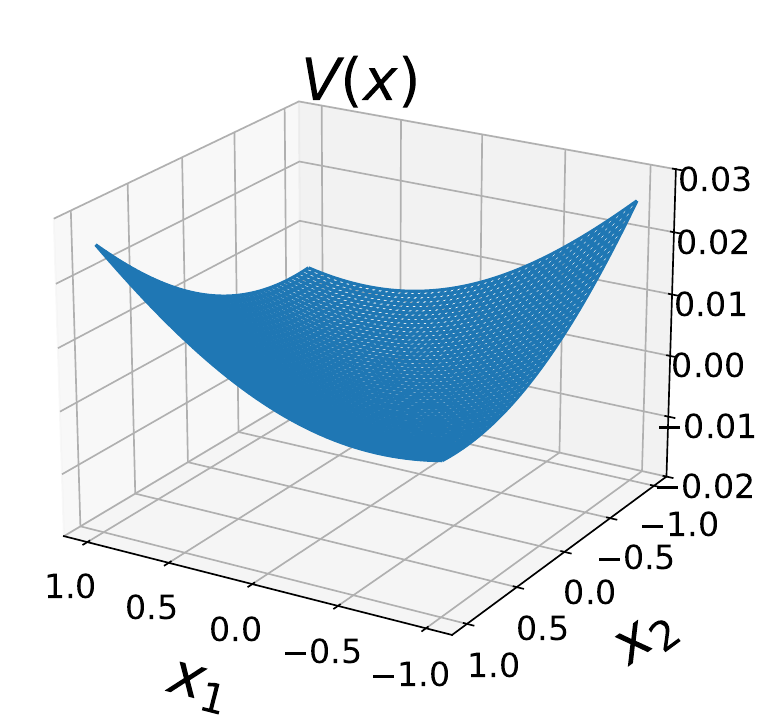}
    \end{subfigure}
    \hfill
    \begin{subfigure}{0.32\textwidth}
        \centering
        \includegraphics[width=\textwidth]{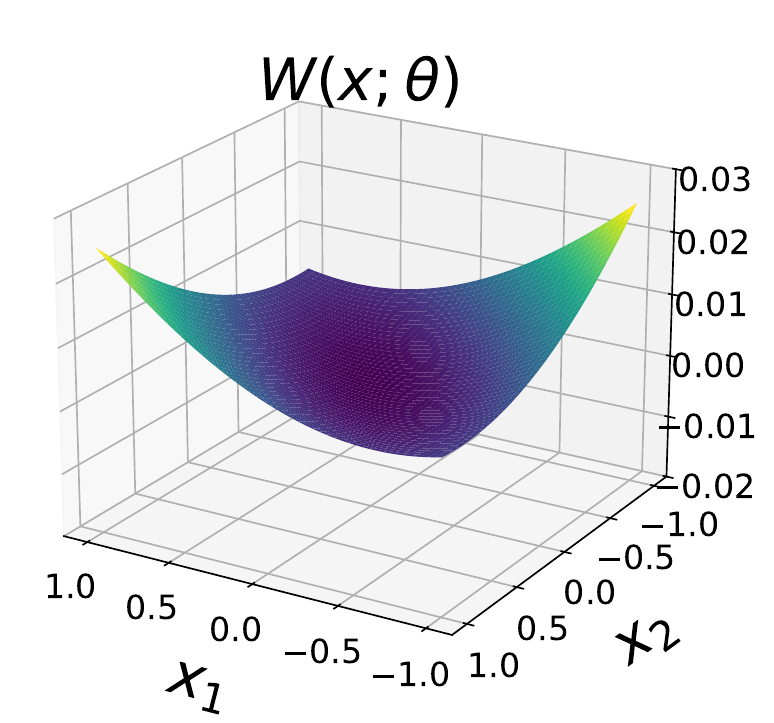}
    \end{subfigure}
    \hfill
    \begin{subfigure}{0.32\textwidth}
        \centering
        \includegraphics[width=\textwidth]{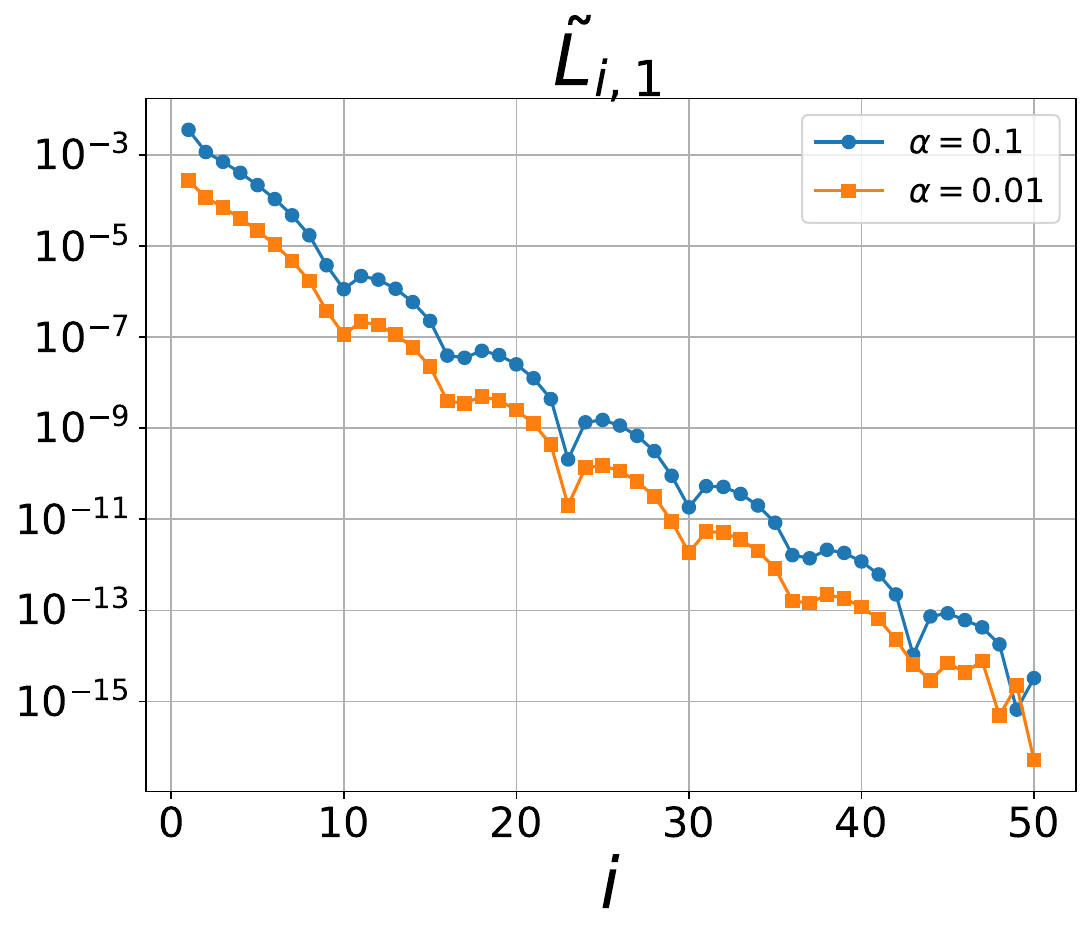}
    \end{subfigure}
    
    \vspace{0.5em} 

    \begin{subfigure}{0.32\textwidth}
        \centering
        \includegraphics[width=\textwidth]{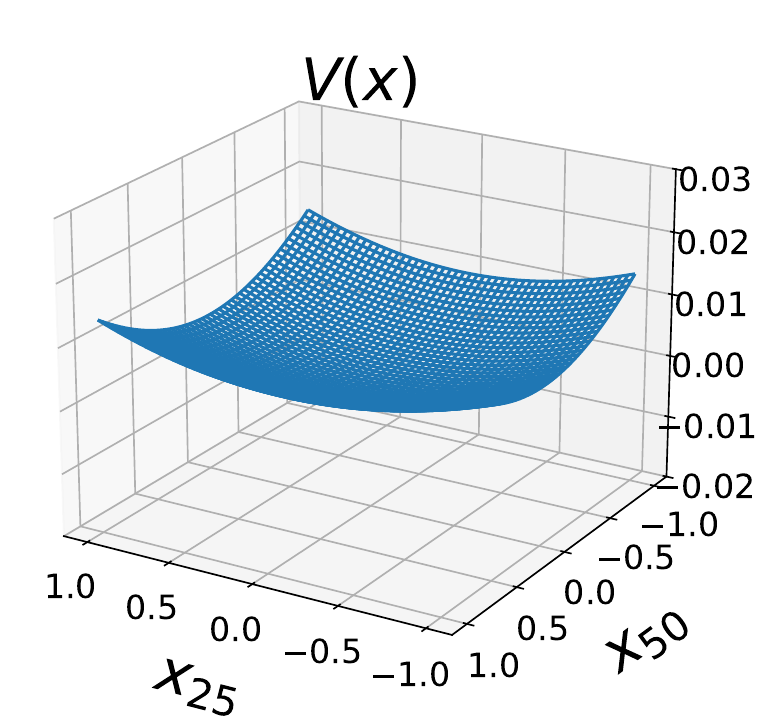}
    \end{subfigure}
    \hfill
    \begin{subfigure}{0.32\textwidth}
        \centering
        \includegraphics[width=\textwidth]{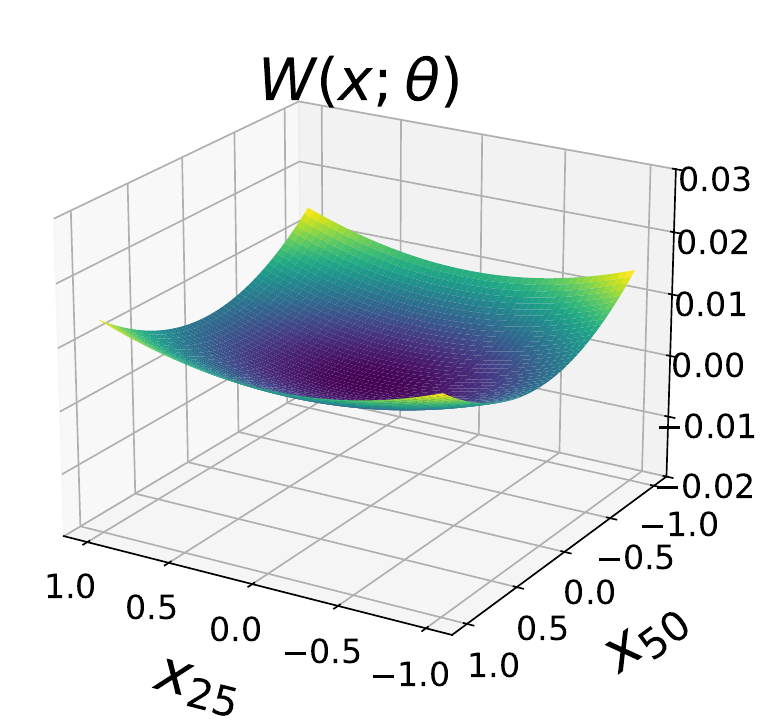}
    \end{subfigure}
    \hfill
    \begin{subfigure}{0.32\textwidth}
        \centering
        \includegraphics[width=\textwidth]{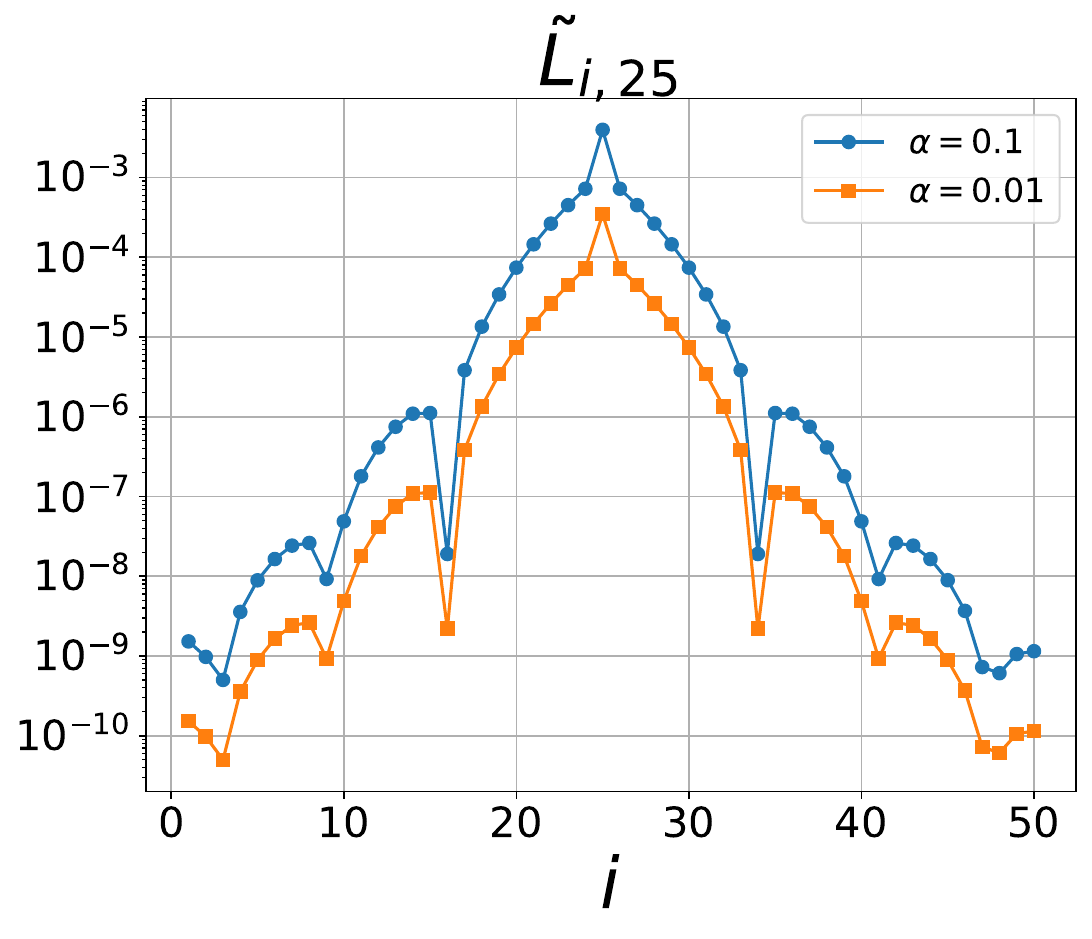}
    \end{subfigure}

    \caption{Plot of $V$ projected onto the $(x_1, x_2)$-axis (upper left) and onto the $(x_{25}, x_{50})$-axis (lower left), the S-NN approximation onto the same hyperplanes (middle column), and the decay of $\tilde{L}_{i,j}$ for $j=1$ (upper right) and $j=25$ (lower right) varying the index $i$.}
    \label{fig:V_50}
\end{figure}

\section{Conclusion}

This work has presented a comprehensive framework for constructing separable approximations of high-dimensional functions, emphasizing their representation through neural networks. The key contribution lies in leveraging the property of decaying sensitivity, which facilitates the decomposition of complex functions into localized components with reduced computational complexity. We demonstrated that separable approximations are not only theoretically viable but also practical in the context of optimal control problems, particularly in the LQR setting. To extend this approach to a nonlinear setting, datasets for the optimal control problem can be systematically generated utilizing either Pontryagin's Maximum Principle \cite{azmi2021optimal} or the State-Dependent Riccati Equation \cite{albi2022,dolgov2022data}. Moreover, the integration of separable structures with neural networks underscores the potential for addressing the curse of dimensionality in a wide range of applications. The resulting architectures allow for efficient representation and computation of value functions.

Future research directions include extending the existence results on decaying sensitivity and the proposed numerical methods to multi-agent control problems, exploring adaptive strategies for graph-based sensitivity modeling, and further optimizing neural network architectures to enhance generalization and computational efficiency.

{\bibliographystyle{abbrv}
\bibliography{literature_Sensitivity} 
}

\end{document}